\definecolor{dark-red}{rgb}{0.5,0.15,0.15}
\definecolor{dark-blue}{rgb}{0.15,0.15,0.6}
\definecolor{dark-green}{rgb}{0.15,0.6,0.15}
\newcommand{\stc}{S^3 \langle 3 \rangle}
\newcommand{\cX}{\cal{X}}
\renewcommand*{\backref}[1]{}
\renewcommand*{\backrefalt}[4]{%
  \ifcase #1 %
No citations.
  \or
(cit. on p. #2).%
  \else
(cit on pp. #2).%
  \fi%
}
\newtheorem{thm}{Theorem}[section]
\newtheorem{cor}[thm]{Corollary}
\newtheorem{prop}[thm]{Proposition}
\newtheorem{lem}[thm]{Lemma}
\newtheorem*{thm*}{Theorem}
\theoremstyle{definition}
\newtheorem{defn}[thm]{Definition}
\newtheorem{ex}[thm]{Example}
\theoremstyle{remark}
\newtheorem{rem}[thm]{Remark}
\let\c@equation\c@thm
\numberwithin{equation}{section}
\DeclareMathOperator{\Hom}{Hom}
\DeclareMathOperator{\colim}{colim}
\DeclareMathOperator{\cA}{\mathcal{A}}
\DeclareMathOperator{\cC}{\mathcal{C}}
\DeclareMathOperator{\rank}{rank}
\DeclareMathOperator{\cF}{\mathcal{F}}
\DeclareMathOperator{\cG}{\mathcal{G}}
\DeclareMathOperator{\Rep}{Rep}
\DeclareMathOperator{\Syl}{Syl}
\DeclareMathOperator{\TrD}{\mathbf{TrDeg}}
\DeclareMathOperator{\depth}{depth}
\DeclareMathOperator{\res}{res}
\DeclareMathOperator{\Tr}{Tr}
\newcommand{\bA}{\mathbf{A}}
\DeclareMathOperator{\supp}{supp}
\newcommand{\cK}{\mathcal{K}}
\newcommand{\cal}{\mathcal}
\newcommand{\xr}{\xrightarrow}
\newcommand{\Z}{\mathbb{Z}}
\Crefname{figure}{Figure}{Figures}
\Crefname{assu}{Assumption}{Assumptions}
\Crefname{lem}{Lemma}{Lemmas}
\Crefname{thm}{Theorem}{Theorems}
\Crefname{ex}{Example}{Examples}
\Crefname{prop}{Proposition}{Propositions}
\renewcommand{\frak}{\mathfrak}
\DeclareMathOperator{\Inj}{Inj}
\newcommand{\recollement}[5]{
\xymatrix{{#1} \ar[r]|-{#2} & #3 \ar[r]|-{#4} \ar@<1ex>[l]^-{{#2}_!} \ar@<-1ex>[l]_-{{#2}^*} & #5, \ar@<1ex>[l]^-{{#4}!} \ar@<-1ex>[l]_-{{#4}^*}
}}
\let\lim\relax
\DeclareMathOperator{\lim}{lim}
\newcommand{\cU}{\mathcal{U}}
\newcommand{\F}{\mathbb{F}}
\newcommand{\cL}{\mathcal{L}}
\DeclareMathOperator{\Map}{Map}
\title{Depth and detection for Noetherian unstable algebras}
\author{Drew Heard}
\address{Fakult{\"a}t f{\"u}r Mathematik, Universit{\"a}t Regensburg}
  \email{drew.k.heard@gmail.com}
\thanks{Supported by SFB 1085 'Higher Invariants' (Universit{\"a}t Regensburg), funded by the Deutsche Forschungsgemeinschaft.}
\date{\today}
\begin{document}

\begin{abstract}
For a connected Noetherian unstable algebra $R$ over the mod $p$ Steenrod algebra, we prove versions of theorems of Duflot and Carlson on the depth of $R$, originally proved when $R$ is the mod $p$ cohomology ring of a finite group.  This recovers the aforementioned results, and also proves versions of them when $R$ is the mod $p$ cohomology ring of a compact Lie group, a profinite group with Noetherian cohomology, a Kac--Moody group, a discrete group of finite virtual cohomological dimension, as well as for certain other discrete groups. More generally, our results apply to certain finitely generated unstable $R$-modules. Moreover, we explain the results in the case of the $p$-local compact groups of Broto, Levi, and Oliver, as well as in the modular invariant theory of finite groups. 
\end{abstract}

\maketitle
\setcounter{tocdepth}{1}
\tableofcontents
\section{Introduction}
For a compact Lie group $G$, the mod $p$ cohomology ring, which we denote $H_G^*$, is a  finitely generated graded-commutative $\F_p$-algebra, or equivalently a graded-commutative Noetherian ring. Computing this group cohomology can be exceedingly difficult. However, Quillen \cite{Quillen1971spectrum} showed that the cohomology ring could be approximated, in a certain sense, by the cohomology of it its elementary abelian $p$-subgroups. Using this, he showed that the Krull dimension of $H_G^*$ is equal to the $p$-rank of $G$, as had been conjectured by Atiyah and Swan. 

One can also ask about the depth of $H_G^*$, namely the maximal length of a regular sequence of homogeneous elements in $H_G^{> 0}$, the maximal ideal of positive degree elements in $H_G^*$ (see \Cref{sec:depth_algebra} for the precise definition of depth). Finding a group-theoretic description of the depth of $H_G^*$ is a difficult problem. Since the depth is always less than the Krull dimension, the depth is bounded above by the $p$-rank of $G$. Standard commutative algebra also gives the improved bound that the depth is bounded above by the minimum dimension of an associated prime of $H_G^*$, i.e., the minimum of the Krull dimensions of $H_G^*/\frak{p}$, where $\frak p$ runs through the associated primes of $G$. The first minimum bound was given by Duflot \cite{Duflot1981Depth}. 
\begin{thm}[Duflot]\label{thm:duflot_original}
  Let $G$ be a finite group, then the depth of $H_G^*$ is greater than or equal to the rank of the maximal central elementary abelian $p$-subgroup of $G$.
\end{thm}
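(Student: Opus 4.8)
Write $R = H_G^*$ and let $\m = R^{>0}$ be its maximal graded ideal, so that $\depth R$ is the length of a maximal homogeneous regular sequence in $\m$; the goal is to produce such a sequence of length $r = \rank C$. The whole strategy is to make the centrality of $C$ visible geometrically and then transport a depth estimate from a free $C$-action back to $R$ itself.

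First I would set up the free locus. Choose a faithful complex representation $G \hookrightarrow U(V)$. Since $C$ is central, $g V^c = V^{gcg^{-1}} = V^c$ for all $g \in G$ and $c \in C$, so $G$ preserves the open submanifold $V_{\mathrm{free}} = V \setminus \bigcup_{1 \neq c \in C} V^c$, on which $C$ acts freely. Replacing $V$ by $V^{\oplus N}$ leaves the $C$-action free while forcing each $V^c$ (with $c \neq 1$, hence $V^c \neq V$ by faithfulness) to have complex codimension at least $N$; thus the inclusion $V_{\mathrm{free}} \hookrightarrow V$ is as highly connected as we like, and since $V \simeq_G \mathrm{pt}$ the restriction $R = H_G^*(V) \to H_G^*(V_{\mathrm{free}})$ is an isomorphism in a range of degrees that grows with $N$.

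The crux is then Duflot's lemma: because $C$ is central in $G$ and acts freely on the manifold $V_{\mathrm{free}}$, one has $\depth_R H_G^*(V_{\mathrm{free}}) \geq r$. I would prove this by induction on $r$, peeling off one central copy of $\Z/p$ at a time. For a single central $\Z/p$ acting freely on a $G$-manifold $M$, the principal $\Z/p$-bundle $M \to M/(\Z/p)$ sits, after applying the Borel construction, inside a fibration whose Gysin/Bockstein sequence produces a class $\chi$ of positive degree; freeness of the $\Z/p$-action forces $\chi$ to be a non-zero-divisor on $H_G^*(M)$, with $H_G^*(M)/(\chi) \cong H_{G/(\Z/p)}^*(M/(\Z/p))$. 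Since $C$ is central, $G/(\Z/p)$ again contains the central elementary abelian subgroup $C/(\Z/p)$ of rank $r-1$ acting freely on $M/(\Z/p)$, so the induction hypothesis supplies a further regular sequence of length $r-1$; together with $\chi$ this gives a regular sequence of length $r$ on $H_G^*(V_{\mathrm{free}})$.

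Finally I would descend: taking $N$ large, the regular sequence $\chi_1, \ldots, \chi_r$ lives in the low-degree range where $R \to H_G^*(V_{\mathrm{free}})$ is an isomorphism (and where all the quotients tested for regularity likewise agree), so it pulls back to a homogeneous regular sequence of length $r$ in $\m$, giving $\depth R \geq r$ as claimed. The main obstacle is the non-zero-divisor step inside Duflot's lemma: one must verify that freeness of the central $\Z/p$-action really makes the Euler/Bockstein class $\chi$ a non-zero-divisor, rather than merely nonzero, and that killing it corresponds cohomologically to passing to the free quotient $M/(\Z/p)$, so that the rank drops by exactly one and the induction closes. This is precisely the point at which centrality of $C$ is indispensable, since it is what guarantees that $G/(\Z/p)$ is again a group carrying a central elementary abelian subgroup of rank $r-1$.
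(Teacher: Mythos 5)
Your proposal takes a genuinely different route from the paper --- the paper obtains Duflot's theorem as the group-theoretic special case of \Cref{thm:duflot_regular} and \Cref{cor:duflot}, whose engine is the Broto--Henn comodule argument: centrality of $(C,g)$ makes $H_G^*$ an $H_C^*$-comodule (\Cref{prop:comdule_central}), hence a direct summand of $H_C^* \otimes H_G^*$ over $\F_p[x_1,\ldots,x_n]$, and a filtration argument shows the latter is free. Your geometric route, however, contains a fatal error: the ``Duflot's lemma'' you invoke is false, and in fact has the logic exactly inverted. Take $G = C = \Z/p$ acting faithfully on a one-dimensional complex representation $V$. Then $V_{\mathrm{free}} = V^{\oplus N}\setminus\{0\} \simeq S^{2N-1}$ carries a free $\Z/p$-action, so $H_G^*(V_{\mathrm{free}}) \cong H^*(S^{2N-1}/(\Z/p))$ is the cohomology of a lens space: a nonzero, finite-dimensional graded $R$-module. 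Every positive-degree homogeneous element of $R = H^*_{\Z/p}$ annihilates its top nonzero degree, hence is a zero divisor, so $\depth_R H_G^*(V_{\mathrm{free}}) = 0$, not $\ge 1$ as your lemma asserts.

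The source of the inversion is the induction step. For a central $\Z/p$ acting freely on $M$, the Borel construction collapses: $EG\times_{\Z/p} M \simeq M/(\Z/p)$, giving an isomorphism $H_G^*(M) \cong H_{G/(\Z/p)}^*(M/(\Z/p))$ outright --- not your claimed $H_G^*(M)/(\chi) \cong H_{G/(\Z/p)}^*(M/(\Z/p))$ for a positive-degree non-zero-divisor $\chi$. Combining the true isomorphism with your claimed one would force the Hilbert series of $H_G^*(M)$ to satisfy $P(t)(1-t^{|\chi|}) = P(t)$, i.e.\ $H_G^*(M) = 0$. Free actions of central subgroups are precisely the situation where depth is destroyed, not created: restriction to the free locus kills (up to nilpotents) the polynomial image of $H_C^*$, which is exactly what would carry the regular sequence. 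The statement that is actually true --- central $C$ acting \emph{trivially} on $M$ implies $\depth_R H_G^*(M) \ge \rank(C)$ --- is Duflot's theorem itself (your target is the case $M = \mathrm{pt}$), so swapping ``free'' for ``trivial'' cannot repair the induction without circularity. Your peripheral steps are fine (the $G$-invariance of $V_{\mathrm{free}}$, the codimension/connectivity estimate for $V^{\oplus N}$, and the descent of regularity through an isomorphism in a range of degrees), but the proposal is missing any valid mechanism that produces a regular sequence; in the paper that mechanism is the comodule splitting of \Cref{cor:comodidentity}, which exhibits $H_G^*$ as a summand of the free $\F_p[x_1,\ldots,x_n]$-module $H_C^* \otimes H_G^*$.
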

In fact, Duflot's considered more generally the depth of the $H_G^*$-module $H_G^*(X)$, where $X$ is a finite $G$-CW complex. If $G$ has Sylow $p$-subgroup $S$, then one can improve on Duflot's theorem slightly: the depth of $H_G^*$ is greater than or equal to the rank of the maximal central elementary abelian $p$-subgroup of $S$. An alternative proof of \Cref{thm:duflot_original}, which also works in the case of a compact Lie group, was given by Broto and Henn \cite{BrotoHenn1993Some}, and exploits the fact that $H_G^*$ is a $H_C^*$-comodule, via the multiplication map $C \times G \to G$ (which is a group homomorphism). Later, in the case $G$ finite, Carlson \cite[Proposition 5.2]{Carlson1999Problems} showed that if $x_1,\ldots,x_n$ is a sequence of homogeneous elements of $H_G^*$ whose restriction to the center of a Sylow $p$-subgroup $S$ of $G$ is a regular sequence, then $x_1,\ldots,x_n$ is a regular sequence in $H_G^*$. One easily recovers Duflot's theorem from this result. 

The $\F_p$-algebra $H_G^*$ is an example of an unstable algebra over the Steenrod algebra \cite{schwartz_book}. One can ask more generally about the depth of an arbitrary Noetherian unstable algebra. A deep result along these lines is a result of Bourguiba and Zarati \cite{CampbellWehlau2011Modular}, which gives the depth of $R$ in terms of the Dickson invariants, settling the Landweber--Stong conjecture.

We will give a version that is closer in spirit to Duflot's theorem. In order to describe this, we need to explain what the analog of a central subgroup is. For this, we follow Dwyer and Wilkerson \cite{DwyerWilkerson1992cohomology}, and define centrality in terms of Lannes' $T$-functor. Briefly, let $R$ be a connected Noetherian unstable algebra, and $g \colon R \to H_C^*$ a morphism of unstable algebras making $H_C^*$ into a finitely generated $R$-module, where $C$ is an elementary abelian $p$-group.\footnote{Groups denoted $C$ and $E$ will always be elementary abelian $p$-groups throughout this paper.} Such pairs $(C,g)$ form the objects of a category $\bA_R$. For each pair $(C,g) \in \bA_R$ there is an unstable algebra $T_C(R;g)$, which is a component of Lannes' $T$-functor $T_CR$, and a canonical map $\rho_{R,(C,g)} \colon R \to T_C(R;g)$. We say that the pair $(C,g)$ is central if this map is an isomorphism. The following is a special case of our first main theorem. 
\begin{thm*}[\Cref{thm:duflot_regular,cor:duflot}]
  Let $R$ be a connected Noetherian unstable algebra and $(C,g)$ a non-trivial central object in $\bA_R$. If $x_1,\ldots,x_n$ is a sequence of homogeneous elements in $R$ such that $x_1,\ldots,x_n$ form a regular sequence in $H_C^*$ considered as an $R$-module via $g$, then $x_1,\ldots,x_n$ is a regular sequence in $R$. In particular, $\depth(R) \ge \rank(C)$. 
  \end{thm*}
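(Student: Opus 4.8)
The plan is to extract from centrality a coaction that turns $R$ into a faithfully flat comodule algebra over the Hopf algebra $H_C^*$, and then to run a descent-and-filtration argument that transports the regular sequence $g(x_1),\dots,g(x_n)$ from $H_C^*$ back to $R$.

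\emph{Step 1: the coaction.} Since $C$ is a finite elementary abelian $p$-group, $H_C^*$ is a (graded, commutative, cocommutative) Hopf algebra, and $-\otimes H_C^*$ is right adjoint to Lannes' functor $T_C$ on unstable algebras. The unit of this adjunction, followed by the projection onto the component indexed by $(C,g)$, produces a canonical map $R \to H_C^*\otimes T_C(R;g)$ whose composite with $\epsilon\otimes\id$ (for $\epsilon\colon H_C^*\to\F_p$ the Hopf counit) is exactly $\rho_{R,(C,g)}$. Using the hypothesis that $(C,g)$ is central, i.e.\ that $\rho_{R,(C,g)}$ is an isomorphism, I would invert it to obtain a morphism of unstable algebras $\psi\colon R \to H_C^*\otimes R$. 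The formal properties of $T_C$ make $\psi$ coassociative and counital, so that $(\epsilon\otimes\id_R)\circ\psi=\id_R$, while unwinding the definition of the component at $g$ identifies $(\id\otimes\bar\epsilon_R)\circ\psi = g$, where $\bar\epsilon_R\colon R\to\F_p$ is the augmentation of the connected algebra $R$. Comparing internal degrees then shows that the part of $\psi(x)$ of top $H_C^*$-degree is precisely $g(x)\otimes 1$.

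\emph{Step 2: faithful flatness via shearing.} Because $H_C^*$ carries an antipode $S$, the shearing map $\beta\colon H_C^*\otimes R \to H_C^*\otimes R$, $\beta(a\otimes r)=(a\otimes 1)\cdot\psi(r)$, is an isomorphism (with inverse built from $S$), and it is $R$-linear from the structure in which $R$ acts on the right tensor factor to the structure in which $R$ acts through $\psi$. Since the former module is free over $R$, this exhibits $\psi\colon R\to H_C^*\otimes R$ as a \emph{faithfully flat} extension. Consequently $\underline x=x_1,\dots,x_n$ is a regular sequence on $R$ if and only if $\psi(x_1),\dots,\psi(x_n)$ is a regular sequence on $H_C^*\otimes R$, by faithfully flat base change applied to Koszul complexes.

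\emph{Step 3: the filtration argument and the depth bound.} It remains to prove that $\psi(\underline x)$ is regular on $H_C^*\otimes R$. Here I would filter $H_C^*\otimes R$ by the internal degree in the $H_C^*$-factor; the associated graded ring is again $H_C^*\otimes R$, and by Step 1 the leading form of each $\psi(x_i)$ is $g(x_i)\otimes 1$. Since $H_C^*\otimes R$ is free, hence faithfully flat, over $H_C^*$, the sequence $g(x_1)\otimes 1,\dots,g(x_n)\otimes 1$ is regular on $H_C^*\otimes R$ precisely because $g(\underline x)$ is regular on $H_C^*$, which is the hypothesis. The standard fact that a sequence whose leading forms are regular on the associated graded ring is itself regular then gives regularity of $\psi(\underline x)$, finishing the regular-sequence statement. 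For the bound $\depth R\ge\rank C$, I would choose homogeneous $x_1,\dots,x_r\in R$ with $r=\rank C$ whose images $g(x_i)$ form a homogeneous system of parameters for $H_C^*$; such elements exist because $g$ makes $H_C^*$ a finite $R$-module, and since $H_C^*$ is Cohen--Macaulay this system of parameters is automatically a regular sequence, so the first part yields a length-$r$ regular sequence in $R$. The main obstacle is Step~1: all of the homotopy-theoretic content sits in passing from the bare isomorphism $\rho_{R,(C,g)}$ to a genuine $H_C^*$-comodule algebra structure with the two compatibilities above, and in verifying that Lannes' functor supplies coassociativity and counitality in the graded-commutative setting (where, for odd $p$, one must also track the Koszul signs coming from the exterior part of $H_C^*$). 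Once the coaction is in hand, Steps~2 and~3 are formal commutative algebra.
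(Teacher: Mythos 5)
Your proposal is correct, and it starts from the same place as the paper --- centrality furnishes the $H_C^*$-comodule structure $\psi$ on $R$ (\Cref{prop:comdule_central}) --- but the commutative algebra then runs along a genuinely different track. The paper's proof of \Cref{thm:duflot_regular} uses only counitality: the identity $(\epsilon_C\otimes 1)\circ\Psi_R=\id$ (\Cref{cor:comodidentity}) exhibits $R$ as a direct summand of $H_C^*\otimes R$ as an $\F_p[x_1,\ldots,x_n]$-module; it then filters $H_C^*\otimes R$ by degree in the $R$-factor, observes that the filtration quotients $H_C^*\otimes R^j$ are free over $\F_p[x_1,\ldots,x_n]$ (by \Cref{lem:depth_regular} applied to $H_C^*$), splits the resulting extensions to get freeness of $H_C^*\otimes R$, and concludes via ``graded projective implies free'' and \Cref{lem:depth_regular} again. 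You instead invoke the full Hopf structure of $H_C^*$: coassociativity together with the antipode makes the shearing map an isomorphism, so $\psi$ is faithfully flat, and regularity of $\psi(x_1),\ldots,\psi(x_n)$ upstairs is checked by a leading-form argument for the filtration by $H_C^*$-degree, with faithfully flat descent finishing the job. Both routes are sound; yours costs more structure (the paper never needs the antipode, and coassociativity appears there only in a remark), while it buys the stronger Hopf--Galois-type statement that $\psi$ is faithfully flat rather than merely split. Two points deserve emphasis. First, the compatibility $(1\otimes\bar\epsilon_R)\circ\psi=g$, which you rightly isolate as the crux, is exactly Dwyer--Wilkerson's criterion for centrality and does follow by unwinding the component construction (the augmentation $T_C(R;g)\to\F_p$ precomposed with $q_g$ is adjoint to $g$); note that the paper's own proof uses this same fact implicitly when it identifies the $\F_p[x_1,\ldots,x_n]$-action on its filtration quotients as the action through $g$. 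Second, your ``standard fact'' about leading forms must be taken in its version for increasing, bounded-below, exhaustive filtrations, where the elementary induction on minimal filtration level needs no completeness hypothesis; the usual adic (decreasing) formulation does not literally apply, and with the decreasing filtration by $H_C^*$-degree the symbol of $\psi(x_i)$ would degenerate to $x_i$ itself, making the argument circular. Finally, your derivation of $\depth(R)\ge\rank(C)$ (homogeneous elements of $R$ whose images form a system of parameters, plus Cohen--Macaulayness of $H_C^*$) is equivalent to the paper's route through \Cref{lem:fgdepth} in the proof of \Cref{cor:duflot}.
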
 
  The proof follows the argument of Broto--Henn and Carlson, and uses the fact that if $(C,g)$ is central in $\bA_R$, then $R$ has the structure of a $H_C^*$-comodule, see \Cref{prop:comdule_central}. In \Cref{sec:examples}, we explain how this recovers Duflot's result for compact Lie groups (in fact, even for the Borel equivariant cohomology of finite $G$-CW complexes), and also works for a larger class of groups, such as groups of finite virtual cohomological dimension, profinite groups with finitely generated $\F_p$-cohomology, and Kac--Moody groups.

   We also explain the result in the case of saturated fusion systems on finite $p$-groups, or more generally discrete $p$-toral groups (these are the $p$-local compact groups of Broto--Levi--Oliver \cite{BrotoLeviOliver2003homotopy,BrotoLeviOliver2007Discrete}). We refer the reader to \Cref{sec:plocal} for more details about $p$-local compact groups. For now, we simply remind the reader that a discrete $p$-toral group is a group which contains a normal subgroup $S_0 \cong (\Z/p^{\infty})^r$ for some $r \ge 0$ such that $S/S_0$ is a finite $p$-group. Such groups always have a non-trivial center. We will write $H_{\cF}^*$ for the cohomology of the saturated fusion system $\cal{F}$. Finally, we note that  if $\cF$ is a saturated fusion system on a discrete $p$-toral group $S$, then there is a canonical restriction map $H_{\cF}^* \to H_S^*$, and hence given any elementary abelian subgroup $E \le S$, there is an induced map $H_{\cF}^* \to H_E^*$. 
  \begin{thm*}[\Cref{thm:depthplocalcompact}]
  Let $\cF$ be a fusion system on a discrete $p$-toral group $S$, and let $C$ denote a central elementary abelian $p$-subgroup of $S$. If $x_1,\ldots,x_n$ is a sequence of homogeneous elements in $H_{\cF}^*$ such that the restriction of $x_1,\ldots,x_n$ form a regular sequence in $H_C^*$, then $x_1,\ldots,x_n$ form a regular sequence in $H_{\cF}^*$. In particular, the depth of $H_{\cF}^*$ is positive, and is greater than or equal to the $p$-rank of the center of $S$. 
  \end{thm*}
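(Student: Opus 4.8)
The plan is to deduce this from our first main theorem (\Cref{thm:duflot_regular,cor:duflot}), but \emph{not} by applying it to $H_{\cF}^*$ directly. The difficulty is that a central elementary abelian subgroup $C\le Z(S)$ need not be central in $\cF$: its image can be moved by $\cF$-conjugation, so the centralizer fusion system $C_{\cF}(C)$ can be a proper subsystem and the pair $(C,g)$, with $g\colon H_{\cF}^*\to H_C^*$ the restriction, is in general \emph{not} a central object of $\bA_{H_{\cF}^*}$. Following the strategy of Carlson, I would instead apply the theorem to $H_S^*=H^*(BS;\F_p)$, where $C$ is genuinely central in the discrete $p$-toral group $S$, and then descend along the inclusion $H_{\cF}^*\hookrightarrow H_S^*$.

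First I would record the structural inputs. For a saturated fusion system $\cF$ on a discrete $p$-toral group $S$, both $H_{\cF}^*$ and $H_S^*$ are connected Noetherian unstable algebras, $H_S^*$ is a finitely generated $H_{\cF}^*$-module via restriction, and for any elementary abelian $C\le S$ the ring $H_C^*$ is finitely generated over $H_S^*$, so that $(C,\mathrm{incl})\in\bA_{H_S^*}$. The essential additional fact is that the characteristic idempotent of the associated $p$-local compact group $(S,\cF,\cL)$ exhibits $H_{\cF}^*$ as a direct summand of $H_S^*$ as an $H_{\cF}^*$-module: the induced idempotent of $H_S^*$ is $H_{\cF}^*$-linear (by the projection formula) and has image exactly $H_{\cF}^*$, giving a splitting $H_S^*\cong H_{\cF}^*\oplus K$ of $H_{\cF}^*$-modules.

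Next I would verify that $(C,\mathrm{incl})$ is a central object of $\bA_{H_S^*}$. Since $C\le Z(S)$, multiplication $C\times S\to S$ is a homomorphism and induces $BC\times BS\to BS$, hence a map $BS\to\Map(BC,BS)$ landing in the component of the inclusion; this is a mod $p$ equivalence onto that component (the discrete $p$-toral analogue of the Dwyer--Wilkerson centralizer computation), so $\rho\colon H_S^*\to T_C(H_S^*;\mathrm{incl})$ is an isomorphism. Granting this, the restriction of $x_1,\ldots,x_n$ to $H_C^*$ factors as $H_{\cF}^*\hookrightarrow H_S^*\to H_C^*$, so \Cref{thm:duflot_regular} applied to $H_S^*$ shows $x_1,\ldots,x_n$ is a regular sequence in $H_S^*$. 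Because $H_{\cF}^*$ is an $H_{\cF}^*$-module direct summand of $H_S^*$, a sequence of elements of $H_{\cF}^*$ that is regular on $H_S^*$ is a fortiori regular on the summand $H_{\cF}^*$ (a nonzerodivisor on $H_{\cF}^*\oplus K$ is a nonzerodivisor on each summand, and the splitting passes to the quotients), which gives the first assertion.

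Finally, for the ``in particular'' I would take $C$ to be a maximal elementary abelian subgroup of $Z(S)$, of rank $r=\rank(Z(S))$. Using $\depth(M\oplus N)=\min(\depth M,\depth N)$ together with the fact that depth is preserved along the finite extension $H_{\cF}^*\hookrightarrow H_S^*$, one gets $\depth(H_{\cF}^*)\ge\depth(H_S^*)$, and \Cref{cor:duflot} applied to the central object $(C,\mathrm{incl})\in\bA_{H_S^*}$ gives $\depth(H_S^*)\ge r$; positivity is the case $r\ge1$, which holds since a discrete $p$-toral group has nontrivial center. (Alternatively, since $H_C^*$ is Cohen--Macaulay of dimension $r$ and finite over $H_{\cF}^*$ via $g$, its image contains a homogeneous system of parameters, a regular sequence of length $r$ that lifts to $H_{\cF}^*$, and one applies the first part.) The main obstacle is the centrality computation underlying \Cref{thm:duflot_regular} for the infinite group $S$ --- namely that a central subgroup of $S$ yields a central object of $\bA_{H_S^*}$, equivalently that $H_S^*$ is a $H_C^*$-comodule via \Cref{prop:comdule_central} --- together with the verification that the characteristic idempotent splits $H_{\cF}^*$ off $H_S^*$ $H_{\cF}^*$-linearly; the descent of regularity to a module direct summand is then formal.
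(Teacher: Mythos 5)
Your proposal is correct and is essentially the paper's own argument: the paper packages exactly this strategy as \Cref{thm:duflotretract}, applied with $R = H_{\cF}^*$ and $P = H_S^*$, using the transfer/characteristic-idempotent splitting of \Cref{thm:cohomlogyplocal} and the centrality of $(C,\mathrm{incl})$ in $\bA_{H_S^*}$ coming from Gonzalez's computation $T_E(H_S^*;\res_{S,E}^*) \cong H^*_{C_S(E)}$ in \Cref{thm:tfunctordiscrete}. Your only (immaterial) deviations are that you descend regularity to the summand $H_{\cF}^*$ directly rather than via freeness over $\F_p[x_1,\ldots,x_n]$ as in the proof of \Cref{thm:duflotretract}, and you sketch the centrality computation topologically where the paper cites Gonzalez.
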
 
  In particular, this is a version of Duflot's theorem for $p$-compact groups, or more generally finite loop spaces. We finish the examples, by explaining some implications in modular invariant theory in \Cref{sec:modular}.  

  Our second main theorem is originally due to Carlson in the special where $R = H_G^*$ is the cohomology of a finite group \cite{Carlson1995Depth}. We state it in the form given in \cite[Corollary 12.5.3]{CarlsonTownsleyValeriElizondoZhang2003Cohomology}. As always, $E$ will denote an elementary abelian $p$-group. 
  \begin{thm}[Carlson]
    Let $G$ be a finite group, and suppose that $H_G^*$ has depth $s$, then the product of restriction maps
    \[
\xymatrix{H_G^* \ar[r] & \displaystyle\prod_{\stackrel{E < G}{\rank(E) = s}} H_{C_G(E)}^*}
    \]
    is injective.
  \end{thm}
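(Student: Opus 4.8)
The plan is to show that the kernel of the product restriction map $\phi \colon H_G^* \to \prod_{\rank(E) = s} H_{C_G(E)}^*$ is zero by proving it has no associated primes. Write $K = \ker(\phi)$, an ideal of $H_G^*$. Since $K$ is a submodule of $H_G^*$, one has $\operatorname{Ass}(K) \subseteq \operatorname{Ass}(H_G^*)$, so it suffices to show that no associated prime of $H_G^*$ lies in $\supp(K)$. If this holds then $\operatorname{Ass}(K) = \varnothing$ and hence $K = 0$, which is exactly injectivity of $\phi$.

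First I would recall, via Quillen's stratification theorem and the theory of associated primes for $H_G^*$, that each associated prime $\mathfrak{p}$ of $H_G^*$ is of the form $\mathfrak{p}_E$ for some elementary abelian subgroup $E \le G$, namely (the radical of) the kernel of the restriction $H_G^* \to H_E^*$, and that $\dim(H_G^*/\mathfrak{p}_E) = \rank(E)$. On the other hand, the standard commutative-algebra inequality $\depth(H_G^*) \le \dim(H_G^*/\mathfrak{p})$, valid for every $\mathfrak{p} \in \operatorname{Ass}(H_G^*)$, forces $\rank(E) \ge s$ for every such $E$. Thus every associated prime of $H_G^*$ comes from an elementary abelian subgroup of rank at least $s$.

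The main step, and the place where the depth hypothesis really enters, is to show that for an associated prime $\mathfrak{p}_E$ with $\rank(E) \ge s$ one has $K_{\mathfrak{p}_E} = 0$. Choose a rank-$s$ subgroup $E' \le E$; then $C_G(E) \le C_G(E')$, so $K \subseteq \ker(\res_{C_G(E')})$, and $\res_{C_G(E')}$ is one of the factors defining $\phi$. It therefore suffices to prove that the restriction $H_G^* \to H_{C_G(E')}^*$ becomes injective after localizing at $\mathfrak{p}_E$. Quillen's theorem already gives detection modulo nilpotents near $E$, but to upgrade this to genuine injectivity at the associated prime $\mathfrak{p}_E$ I would invoke Duflot's bound (the first main theorem applied to the central pair $(E', \res)$ in $\bA_{H_{C_G(E')}^*}$, since $E'$ is central in $C_G(E')$), which gives $\depth(H_{C_G(E')}^*) \ge \rank(E') = s$. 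This depth control rigidifies the associated primes of the centralizer and rules out a kernel of $\res_{C_G(E')}$ supported at $\mathfrak{p}_E$, forcing $K_{\mathfrak{p}_E} = 0$ and hence $\mathfrak{p}_E \notin \operatorname{Ass}(K)$.

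Combining the two steps, no associated prime of $H_G^*$ survives in $K$, so $\operatorname{Ass}(K) = \varnothing$ and $K = 0$. The hard part is precisely the localization argument of the previous paragraph: Quillen stratification only controls the situation up to $F$-isomorphism, and passing from detection modulo nilpotents to honest injectivity of a single centralizer restriction at a top-dimensional associated prime is exactly where the depth behaviour supplied by Duflot's theorem is indispensable. A secondary subtlety is the bookkeeping of conjugacy: the subgroup $E'$ and the prime $\mathfrak{p}_E$ must be matched up to $G$-conjugacy, which I would handle using the naturality of restriction together with the $G$-invariance built into the stratification.
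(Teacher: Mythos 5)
Your proposal is not the paper's argument --- it is the skeleton of Carlson's \emph{original} proof (reduce to associated primes, note that every associated prime of $H_G^*$ is of the form $\mathfrak{p}_E = \sqrt{\ker(\res^*_{G,E})}$ with $\rank(E) \ge \depth(H_G^*)$ by Duflot's associated-primes theorem and the standard bound $\depth \le \dim(H_G^*/\mathfrak{p})$, then kill the kernel prime by prime) --- and it has a genuine gap exactly at the step you yourself flag as the hard part. The claim that $\bigl(\ker(\res \colon H_G^* \to H_{C_G(E')}^*)\bigr)_{\mathfrak{p}_E} = 0$ is asserted, not proved: invoking Duflot's bound $\depth(H_{C_G(E')}^*) \ge \rank(E') = s$ cannot do the job, because that bound constrains the associated primes of $H_{C_G(E')}^*$ as a module (over itself, or equivalently over $H_G^*$), whereas what you need to control is the kernel of the ring map $H_G^* \to H_{C_G(E')}^*$, whose associated primes already lie in $\operatorname{Ass}(H_G^*)$ and are therefore already known to have dimension $\ge s$; the phrase ``rigidifies the associated primes of the centralizer'' supplies no mechanism connecting the two. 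Note how delicate the claim is: restriction to a single centralizer is far from injective, so $(\ker \res_{C_G(E')})_{\mathfrak{p}_{E''}} \ne 0$ for typical associated primes $\mathfrak{p}_{E''}$; any correct proof must use the relation $E' \le E \le C_G(E')$ in an essential, quantitative way, and nothing in your argument does so beyond the set-up.

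What actually closes this gap in the classical setting is the transfer: Frobenius reciprocity gives $x \cdot \Tr_{C_G(E')}^G(y) = \Tr_{C_G(E')}^G(\res(x)\,y) = 0$ for $x \in \ker(\res_{C_G(E')})$, so $\operatorname{im}(\Tr_{C_G(E')}^G) \subseteq \Ann\bigl(\ker(\res_{C_G(E')})\bigr)$, and then Benson's theorem on the variety of the image of the transfer shows $\operatorname{im}(\Tr_{C_G(E')}^G) \not\subseteq \mathfrak{p}_E$ (this is where $E \le C_G(E')$ enters), whence $\mathfrak{p}_E \notin \supp(\ker \res_{C_G(E')})$. With that ingredient your outline becomes Carlson's proof --- but it is precisely the ingredient the paper deliberately avoids, since a suitable transfer does not exist for general Noetherian unstable algebras. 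The paper instead argues contrapositively with no associated primes at all: if the kernel $I_s$ of $\phi_s$ is nonzero, the Henn--Lannes--Schwartz embedding theorem forces $I_s$ to embed into a product of modules $H_E^* \otimes T_E(M;f)^{<n}$ indexed by pairs with $\rank(E) < s$, so $\TrD_{R-\cU}(I_s) < s$ by the $T$-support computation, and then Powell's proposition ($\TrD_{R-\cU}$ of a nonzero submodule bounds $\depth_R$ of the ambient module from above) yields $\depth_R(M) < s$. In that argument the HLS theorem plays the role of Benson's transfer theorem and Powell's proposition plays the role of your ``depth $\le$ dimension of associated primes'' bound; if you want to salvage your write-up for finite groups you must either import Benson's theorem explicitly or switch to this unstable-module route.
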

  The proof relies on a result of Benson regarding the image of the transfer $\Tr_K^G \colon H_K^* \to H_G^*$ for a subgroup $K < G$. An extension to the case of compact Lie groups was given by Cameron \cite[Theorem 4.13]{cameron_duflot} using another theorem of Duflot. Both proofs do not seem easy to generalize to an arbitrary unstable algebra. Rather, we use a deep result due to Henn, Lannes, and Schwartz \cite{HennLannesSchwartz1995Localizations}, as well as some $T$-functor technology. That such a proof should be possible is already mentioned in \cite{Carlson1995Depth}, and a (different) proof was given in the unpublished master's thesis of \cite{Poulsen}. 

  Our theorem in fact works very generally. Namely, let $R$ be a connected Noetherian unstable algebra, and $M$ an unstable module that is compatibly a finitely generated $R$-module (see \Cref{sec:tfunctorcomponent} for the exact details). Such objects form the objects of a category $R_{fg}-\cU$. Then, given a pair $(E,f) \in \bA_R$, there is a natural map $\rho_{M,(E,f)} \colon M \to T_E(M;f)$. Let $\bA_R^s$ denote the full subcategory of $\bA_R$ with objects pairs $(E,f)$ where $E$ is an elementary abelian $p$-subgroup of rank $s$. The maps above then assemble to give a morphism
   \[
\xymatrix{
  \phi_s \colon M \ar[r] & \displaystyle \prod_{(E,f) \in \bA_R^s} T_E(M;f).
}
   \]
   Our generalization of Carlson's theorem is the following. 
  \begin{thm*}(\Cref{thm:carlson})
   Let $R$ be a Noetherian unstable algebra, and $M \in R_{fg}-\cU$. Suppose that $\depth_R(M) \ge s$, then
   \[
\xymatrix{
  \phi_s \colon M \ar[r] & \displaystyle \prod_{(E,f) \in \bA_R^s} T_E(M;f)
}
   \]
   is injective. 
    \end{thm*}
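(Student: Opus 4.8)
The plan is to show that $\ker(\phi_s) = 0$ by combining a detection statement, furnished by the theorem of Henn--Lannes--Schwartz together with the behaviour of Lannes' $T$-functor under restriction along subgroups, with the standard bound on depth in terms of associated primes. First I would record the commutative-algebra input. Since $R$ is connected Noetherian with irrelevant maximal ideal $\m = R^{>0}$ and $M$ is finitely generated, for every associated prime $\p \in \operatorname{Ass}_R(M)$ one has $\depth_R(M) \le \dim(R/\p)$. Hence the hypothesis $\depth_R(M) \ge s$ forces $\dim(R/\p) \ge s$ for all $\p \in \operatorname{Ass}_R(M)$; equivalently, $M$ has no nonzero submodule whose support has dimension $< s$.

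Next I would set $K = \ker(\phi_s)$, a finitely generated unstable $R$-submodule of $M$, and reduce the theorem to the detection claim that $K \subseteq M_{<s}$, where $M_{<s} = \{x \in M : \dim \supp_R(Rx) < s\}$ is the unstable $R$-submodule of elements with small support. Granting this, the proof concludes formally: every associated prime of the submodule $K \subseteq M$ lies in $\operatorname{Ass}_R(M)$, hence has $\dim(R/\p) \ge s$ by the previous paragraph; on the other hand, $K \subseteq M_{<s}$ means every nonzero $x \in K$ satisfies $\dim(R/\Ann_R(x)) = \dim \supp_R(Rx) < s$, so every associated prime of $K$ has dimension $< s$. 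Since a nonzero finitely generated module has an associated prime, these two conclusions are incompatible unless $K = 0$, which is exactly the injectivity of $\phi_s$.

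It remains to establish $K \subseteq M_{<s}$, equivalently: if $0 \ne x \in M$ has $\dim \supp_R(Rx) \ge s$, then $\rho_{M,(E,f)}(x) \ne 0$ for some $(E,f) \in \bA_R^s$. Here I would proceed in two stages. I would first detect $x$ at the appropriate rank: choosing a prime $\p \in \supp_R(Rx)$ with $\dim(R/\p) = t \ge s$, the Quillen-type stratification available for Noetherian unstable algebras together with the Henn--Lannes--Schwartz description of $M$ modulo nilpotent modules produces a pair $(E',f') \in \bA_R$ with $\rank(E') = t$ realizing the stratum of $\p$ and with $\rho_{M,(E',f')}(x) \ne 0$. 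I would then descend to rank $s$ by $T$-functor technology: writing $E' \cong E \oplus W$ with $\rank(E) = s$, the iterated $T$-functor identity $T_{E'} \cong T_W \circ T_E$ factors the unit $\rho_{M,(E',f')}$ through $\rho_{M,(E,f)}$ for the restricted pair $(E,f) \in \bA_R^s$, so that non-vanishing at $(E',f')$ forces non-vanishing at $(E,f)$. This yields $K \subseteq M_{<s}$ (and in fact equality).

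The main obstacle is precisely this detection claim. The Henn--Lannes--Schwartz theorem controls $M$ only up to nilpotent modules, i.e. up to the nilpotent/Krull filtration, whereas the argument above needs the sharp statement that, for finitely generated modules over a Noetherian unstable algebra, the kernel of the rank-$s$ detection map is exactly the dimension-$<s$ part. Bridging this gap requires matching the nilpotent/Krull filtration degree of $M$ with the Krull dimension of $\supp_R(M)$, and carefully tracking the $T$-functor component indexed by $(E,f)$ through the restriction $E \hookrightarrow E'$ so that the component realizing $\p$ at rank $t$ maps to a nonzero component at rank $s$; this component bookkeeping, rather than the formal depth argument, is where the real work lies.
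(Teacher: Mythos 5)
Your formal skeleton is sound (the commutative-algebra endgame via associated primes is correct, and your ``descent'' from rank $t$ to rank $s$ is exactly the same factorization the paper uses to show $\ker\phi_s \subseteq \ker\phi_t$ for $t \ge s$), but the proof has a genuine gap exactly where you admit ``the real work lies'': the detection claim that $\ker(\phi_s) \subseteq M_{<s}$, i.e.\ that any $x$ with $\dim \supp_R(Rx) \ge s$ satisfies $\rho_{M,(E,f)}(x) \ne 0$ for some $(E,f) \in \bA_R^s$, is never proved, and the sketch you give for it does not go through as stated. You invoke a ``Quillen-type stratification'' to produce, from a prime $\fp \in \supp_R(Rx)$ with $\dim(R/\fp)=t$, a pair $(E',f') \in \bA_R$ of rank $t$ detecting $x$ via $\rho_{M,(E',f')}$. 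No such statement is available: the Rector/Broto--Zarati $\cF$-isomorphism (\Cref{prop:rectorprops}) controls the ring $R$ only up to nilpotents and says nothing about detecting a given element of a module $M$ by the maps $\rho$, while the Henn--Lannes--Schwartz theorem (\Cref{thm:hls}) detects elements by the maps $\eta_{M,(E,f)}$ into $H_E^* \otimes T_E(M;f)^{<n}$, over all of $\bA_R$ at once, with no control over the rank of the detecting pair. Converting $\eta$-detection into $\rho$-detection at ranks $\ge s$ is precisely the content of the paper's argument: one uses $\eta_{M,(E,f)} \cong \kappa_{M,(E,f)} \circ \rho_{M,(E,f)}$ (\Cref{lem:commcomponent}) to see that $\ker(\phi_s)$ is killed by all $\eta$'s at ranks $\ge s$, so that injectivity of the full HLS map forces $\ker(\phi_s)$ to embed into the low-rank part of the product; the transcendence degree of that part is $< s$ by the computation with Henn's formula for $T_W(H_E^* \otimes -)$ (\Cref{prop:supportcalc}); and Powell's result (\Cref{prop:powell_mono}) converts the bound $\TrD_{R-\cU}(\ker \phi_s) < s$ into $\depth_R(M) < s$. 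None of these steps appears in your outline.

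Relatedly, your argument silently identifies two different invariants: the ring-theoretic support dimension $\dim \supp_R(-)$, which your associated-primes argument needs (via $\depth_R(M) \le \dim(R/\fp)$ for $\fp \in \operatorname{Ass}_R(M)$), and the $T$-functor invariant $\TrD_{R-\cU}(-)$, which is what the unstable-module technology actually bounds. Bridging them --- e.g.\ proving $\dim\supp_R(N) \le \TrD_{R-\cU}(N)$ for $N \in R_{fg}-\cU$, using the finite skeleton of $\bA_R$ and the finiteness of each $H_E^* \otimes T_E(M;f)^{<n}$ over $R$ --- is plausible and would indeed let your associated-primes argument substitute for Powell's proposition, but it is a theorem requiring the same HLS input, not a formality. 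Finally, your characterization of HLS as controlling $M$ ``only up to nilpotent modules'' undersells the statement actually used in the paper: for $M \in R_{fg}-\cU$ over Noetherian $R$ it is an honest monomorphism, and recognizing and exploiting that sharpness is the missing idea. As it stands, your proposal reduces the theorem to an unproved claim that carries essentially all of its difficulty.
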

    For example, we get the following group theoretic results. 
    \begin{thm*}(\Cref{thm:carlson_borel})
      Suppose we are in one of the following cases:
      \begin{enumerate}
          \item $G$ is a compact Lie group, and $X$ is a $G$-$CW$-complex with finitely many $G$-cells. 
    \item $G$ is a discrete group for which there exists a mod $p$ acyclic $G$-CW complex with finitely many $G$-cells and finite isotropy groups, and $X$ is any $G$-$CW$ complex with finitely many $G$-cells and with finite isotropy groups. 
      \end{enumerate}
      If $\depth_{H_G^*}(H_G^*(X)) \ge s$, then the product of restriction maps 
         \[
\xymatrix{
  H_G^*(X) \ar[r] & \displaystyle \prod_{\stackrel{E < S}{\rank(E) = s}} H_{C_G{(E)}}^*(X^E)
}
   \]
   is injective. 
    \end{thm*}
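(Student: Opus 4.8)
The plan is to deduce the statement from the general result \Cref{thm:carlson} by specialising $R = H_G^*$ and $M = H_G^*(X)$. The first task is to verify that, in each of the two cases, we land in the setting of that theorem: $H_G^*$ must be a connected Noetherian unstable algebra and $H_G^*(X)$ a finitely generated unstable $H_G^*$-module. For a compact Lie group this is classical (Venkov--Quillen), and the finite-cell hypothesis on $X$ guarantees finite generation of $H_G^*(X)$. In the discrete case the mod $p$ acyclic $G$-CW complex with finitely many cells and finite isotropy lets one compare $H_G^*$ and $H_G^*(X)$ with the cohomology of the finite isotropy groups, yielding Noetherianity and finite generation exactly as for groups of finite virtual cohomological dimension; this is where hypothesis (2) is used.

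The second task is to identify the indexing category $\bA_{H_G^*}^s$ and the factors $T_E(M;f)$ in group-theoretic terms. By Lannes' theorem the unstable-algebra maps $f \colon H_G^* \to H_E^*$ are classified by conjugacy classes of homomorphisms $\rho \colon E \to G$, with $f = (B\rho)^*$, and the finiteness condition defining $\bA_{H_G^*}$ singles out those $\rho$ for which $H_E^*$ is finite over $H_G^*$, namely the monomorphisms. Thus the objects of $\bA_{H_G^*}^s$ are exactly the conjugacy classes of elementary abelian $p$-subgroups $E$ of rank $s$, matching the index set of the product (distinct conjugacy classes of a fixed abstract $E$ contributing distinct factors).

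Third, I would invoke the equivariant form of Lannes' calculation of the $T$-functor. Writing $H_G^*(X) = H^*(EG \times_G X)$, the component of $T_E$ attached to $\rho$ is computed, via the identification of $T_E$ with the cohomology of a mapping space together with the homotopy decomposition
\[
\Map(BE, EG \times_G X) \simeq \coprod_{[\rho]} EC_G(\rho E) \times_{C_G(\rho E)} X^{\rho E},
\]
as $T_E(H_G^*(X); (B\rho)^*) \cong H_{C_G(\rho E)}^*(X^{\rho E})$. By naturality of $T_E$ and the construction of $\rho_{M,(E,f)}$ as the adjunction unit, the canonical map $M \to T_E(M;f)$ corresponds under this isomorphism to the restriction map $H_G^*(X) \to H_{C_G(E)}^*(X^E)$ induced by the inclusion of fixed points. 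Assembling over all rank-$s$ classes then identifies $\phi_s$ with the product of restriction maps, so the injectivity supplied by \Cref{thm:carlson} whenever $\depth_{H_G^*}(H_G^*(X)) \ge s$ yields precisely the claim.

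The main obstacle is this third step: establishing the equivariant $T$-functor identification with the mapping-space decomposition and checking the hypotheses under which Lannes' theorem (finite type, convergence of the relevant spectral sequence) applies to the Borel construction in both settings, the discrete case again relying on the acyclicity and finite-isotropy input to secure the necessary finiteness. Once these identifications are in hand, the deduction from \Cref{thm:carlson} is purely formal.
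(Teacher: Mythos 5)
Your proposal is correct and follows essentially the same route as the paper: the paper deduces the statement by combining \Cref{thm:carlson} with \Cref{thm:tcalcsgroups}, which records exactly the ingredients you reconstruct (Noetherianity of $H_G^*$ and finite generation of $H_G^*(X)$, the equivalence $\bA_G \simeq \bA_{H_G^*}$, and the identification $T_E(H_G^*(X);\res_{G,E}^*) \cong H_{C_G(E)}^*(X^E)$ under which $\rho_{M,(E,f)}$ becomes the restriction map). The only difference is that the paper cites these facts (Quillen, Rector, Lannes, Henn) rather than sketching their proofs as you do.
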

In the case where $X$ is a point, the result applies to a larger class of groups. 
    \begin{thm*}(\Cref{thm:carlson_group2})
    Suppose we are in one of the following cases:
    \begin{enumerate}
    \item $G$ is a profinite group such that the continuous mod $p$ cohomology $H_G^*$ is finitely generated as an $\F_p$-algebra. 
    \item $G$ is a discrete group of finite virtual cohomological dimension. 
    \item $G$ is a Kac--Moody group.
    \end{enumerate}
          If $\depth(H_G^*) \ge s$, then the product of restriction maps 
         \[
\xymatrix{
H_G^* \ar[r] & \displaystyle \prod_{\stackrel{E < S}{\rank(E) = s}} H_{C_G{(E)}}^*
}
   \]
   is injective.
    \end{thm*}
    Examples of such groups include $p$-adic analytic Lie groups, $(S)$-arithmetic groups, mapping class groups of orientable surfaces, outer automorphisms of free groups, and the word-hyperbolic groups of Gromov (see the discussion on references on page 192 of \cite{Henn1996Commutative}). 
    We also explain the results in the case of saturated fusion system on discrete $p$-toral groups. Here we can construct centralizer fusion systems, and the result is analogous to the group theoretic result above. 

\subsection*{Organization}    The paper is organized as follows. In \Cref{sec:centers} we review Lannes' $T$-functor, and centers of unstable algebras. In \Cref{sec:depth} we prove our versions of the theorems of Duflot and Carlson, while we finish with many examples in \Cref{sec:examples}. 
\subsection*{Acknowledgements} We thank Geoffrey Powell for a helpful conversation regarding \cite{powell}. 
\section{Centers of unstable algebras}\label{sec:centers}
\subsection{Unstable algebras and Lannes' \texorpdfstring{$T$}{T}-functor}\label{sec:tfunctor}
We begin with a brief review of unstable algebras over the Steenrod algebra, and Lannes' $T$-functor. General references for this material include the papers of Lannes \cite{lannes_unpublished,lannes_ihes}, the book of Schwartz \cite{schwartz_book}, or the notes of Henn \cite{henn_notes}. 

We let $\cU$ denote the category of unstable modules over the mod $p$ Steenrod algebra, and $\cK$ the category of unstable algebras over the mod $p$ Steenrod algebra. A typical object of $\cK$ is the mod $p$ cohomology of a space $X$ for some prime $p$. We note that in general the cohomology $H^*(X)$ is a graded-commutative ring, and we assume the same for any unstable algebra $R$. We say that $R$ is connected if $R^0 \cong \F_p$ (note that $R^i = 0$ for $i < 0$ is forced by the assumption that $R \in \cK$). 

Let $E$ be an elementary abelian $p$-group. We recall that Lannes' $T$-functor $T_E$ is left adjoint to tensoring with $H_E^*$ in the category $\cal{U}$, i.e., there is an isomorphism
\[
\Hom_{\cU}(T_ER,S) \cong \Hom_{\cU}(R,H_E^* \otimes S)
\]
for $R,S \in \cU$. The functor $T_E$ is exact and commutes with tensor products, and restricts to a functor $\cK \to \cK$, see \cite{lannes_ihes} or \cite[Theorem 3.2.2 and Theorem 3.8.1]{schwartz_book}.

There are several maps that we will use repeatedly throughout this paper. Let $M$ be an unstable algebra, then the adjoint of the identity map $T_EM \to T_EM$ gives rise a morphism $\eta_{M,E} \colon M \to H_E^* \otimes T_EM$ (this is the counit of the adjunction). We define a map $\kappa_{M,E} \colon T_EM \to H_E^* \otimes T_EM$ to be the adjoint of the composite
\[
\xymatrix{M \ar[r]^-{\eta_{M,E}}  & H_E^* \otimes T_EM \ar[r]^-{\Delta \otimes 1} & H_E^* \otimes H_E^* \otimes T_EM, }
\]
where $\Delta$ is the comultiplication map $H_E^* \to H_E^* \otimes H_E^*$. As shown in \cite[Section 1.13]{HennLannesSchwartz1995Localizations} for each $E$, the map $\kappa_{M,E}$ gives $T_EM$ the structure of a $H_E^*$-comodule. In fact, for the purposes of the paper we will only need to know that the composite
\begin{equation}\label{eq:comodidentity}
\xymatrix{T_EM \ar[r]^-{\kappa_{M,E}} & H_E^* \otimes T_EM \ar[r]^-{\epsilon_E \otimes 1} & T_EM}
\end{equation}
is the identity, where $\epsilon_E \colon H_E^* \to \F_p$ is the canonical projection. But the adjoint to this is the morphism $M \to H_E^* \otimes T_EM$ given by the composite $(1 \otimes \epsilon_E \otimes 1) \circ (\Delta \otimes 1) \circ \eta_{M,E} \simeq \eta_{M,E}$, and hence the composite is indeed the identity. 

Finally, we observe that given a morphism $\alpha \colon E \to V$ of elementary abelian $p$-groups, to give a map $T_EM \to T_VM$ we can give its adjoint, namely a morphism $M \to H_E^* \otimes T_VM$; there is an obvious candidate, namely the composite
\[
\xymatrix{M \ar[r]^-{\eta_{M,V}} & H_V^* \otimes T_VM \ar[r]^-{\alpha^* \otimes 1} & H_E^* \otimes T_VM}
\]
In the particular case where $\alpha \colon \{ e\} \to E$ is the inclusion of the trivial group, we obtain a map $\rho_{M,E} \colon T_{\{e \}}R \cong R \to T_ER$, which is equivalently given by the composite 
\[
\xymatrix{M \ar[r]^-{\eta_{M,E}} & H_E^* \otimes T_EM \ar[r]^-{\epsilon_E \otimes 1} &  T_EM}.
\]

Given $f \in \Hom_{\cK}(T_EM,S)$, let us write $f^{\#}$ for the corresponding adjoint map $M \to H_E^* \otimes S$. By taking the adjoint of the composite $T_EM \xlongequal{\text{id}} T_EM \xr{f} S$, we see that $f^{\#}$ is given by the composite 
\begin{equation}\label{eq:adjoint}
\xymatrix{M \ar[r]^-{\eta_{M,E}} & H_E^* \otimes T_EM \ar[r]^-{1 \otimes f} & H_E^* \otimes S}.
\end{equation}
This gives the following result. 
\begin{lem}\label{lem:commdia}
	For any map $f \colon T_EM \to S$ the diagram
\[
\begin{tikzcd}
	M \arrow{r}{\rho_{M,E}} \arrow[swap]{d}{f^{\#}} & T_EM \arrow{d}{f} \\
	H_E^* \otimes S \arrow[swap]{r}{\epsilon_E \otimes 1} & S,
\end{tikzcd}
\]
commutes. 
\end{lem}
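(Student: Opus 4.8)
The plan is to unwind the two definitions recalled immediately before the statement and reduce the commutativity to bifunctoriality of the tensor product. First I would write out both composites around the square. Using the description of $\rho_{M,E}$ as $(\epsilon_E \otimes 1) \circ \eta_{M,E}$, the clockwise path is
\[
f \circ \rho_{M,E} = f \circ (\epsilon_E \otimes 1_{T_EM}) \circ \eta_{M,E}.
\]
Using the formula \eqref{eq:adjoint} for the adjoint, $f^{\#} = (1_{H_E^*} \otimes f) \circ \eta_{M,E}$, the counterclockwise path is
\[
(\epsilon_E \otimes 1_S) \circ f^{\#} = (\epsilon_E \otimes 1_S) \circ (1_{H_E^*} \otimes f) \circ \eta_{M,E}.
\]
Since both paths begin with the single map $\eta_{M,E}$, it suffices to prove that the two maps $H_E^* \otimes T_EM \to S$ appearing on the right-hand sides of these displays agree.

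The key step is then the identity
\[
(\epsilon_E \otimes 1_S) \circ (1_{H_E^*} \otimes f) = f \circ (\epsilon_E \otimes 1_{T_EM}),
\]
both sides being equal to $\epsilon_E \otimes f$, i.e.\ the map $a \otimes m \mapsto \epsilon_E(a)\, f(m)$ under the identification $\F_p \otimes S \cong S$. This is precisely the interchange law for the tensor product of the two morphisms $\epsilon_E \colon H_E^* \to \F_p$ and $f \colon T_EM \to S$. Precomposing the common value with $\eta_{M,E}$ yields $f \circ \rho_{M,E} = (\epsilon_E \otimes 1) \circ f^{\#}$, which is exactly the asserted commutativity.

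The hard part will be essentially nonexistent: the statement is a purely formal consequence of the definitions, and the only point requiring care is the bookkeeping of which identity map acts on which tensor factor. No feature of the unstable-algebra structure is used beyond the definitions of $\eta_{M,E}$, $\rho_{M,E}$, and $f^{\#}$, so the argument goes through in any symmetric monoidal category with $\epsilon_E$ serving as an augmentation. Alternatively, one may phrase the key step as the naturality square of the natural transformation $\epsilon_E \otimes 1_{(-)}$ evaluated at $f$, which renders the commutativity immediate.
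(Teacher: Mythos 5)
Your proposal is correct and follows essentially the same route as the paper: both unwind $f^{\#} = (1 \otimes f)\circ \eta_{M,E}$ via \eqref{eq:adjoint} and $\rho_{M,E} = (\epsilon_E \otimes 1)\circ \eta_{M,E}$, and then conclude by the interchange law $(\epsilon_E \otimes 1)\circ(1 \otimes f) = f\circ(\epsilon_E \otimes 1)$. The extra remarks on monoidal generality and naturality are fine but add nothing beyond the paper's three-line computation.
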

\begin{proof}
	As noted, $f^{\#}$ factors as the composite $(1 \otimes f)\circ \eta_{M,E}$. It follows that 
\[
\begin{split}
(\epsilon_E \otimes 1)\circ f^{\#} &\cong (\epsilon_E \otimes 1)\circ (1 \otimes f)\circ \eta_{M,E}\\
&\cong   f \circ (\epsilon_E \otimes 1)\circ \eta_{M,E}\\
& \cong f \circ \rho_{M,E}
\end{split}
\]
as required. 
\end{proof}
We deduce the following. 
\begin{cor}\label{lem:comm}
	There is an isomorphism $ \kappa_{M,E} \circ \rho_{M,E} \cong \eta_{M,E}$. 
\end{cor}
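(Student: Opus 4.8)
The plan is to obtain this as an immediate consequence of \Cref{lem:commdia}, specialized to the map $f = \kappa_{M,E}$, followed by a single application of the counit axiom for the coalgebra $H_E^*$.

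First I would unwind the definition of $\kappa_{M,E}$: by construction its adjoint is
\[
\kappa_{M,E}^{\#} = (\Delta \otimes 1) \circ \eta_{M,E} \colon M \longrightarrow H_E^* \otimes (H_E^* \otimes T_EM),
\]
so that, writing $S = H_E^* \otimes T_EM$, the map $\kappa_{M,E} \colon T_EM \to S$ is of exactly the shape to which \Cref{lem:commdia} applies. Feeding $f = \kappa_{M,E}$ into that lemma yields
\[
\kappa_{M,E} \circ \rho_{M,E} \cong (\epsilon_E \otimes 1) \circ \kappa_{M,E}^{\#} \cong (\epsilon_E \otimes 1) \circ (\Delta \otimes 1) \circ \eta_{M,E}.
\]

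It then remains only to simplify the right-hand side. The arrow $\epsilon_E \otimes 1 \colon H_E^* \otimes S \to S$ applies $\epsilon_E$ to the outermost $H_E^*$-factor, which is precisely the first of the two copies produced by $\Delta$; hence $(\epsilon_E \otimes 1)\circ(\Delta \otimes 1) \cong \big((\epsilon_E \otimes 1)\circ \Delta\big) \otimes 1_{T_EM}$, and the left counit identity $(\epsilon_E \otimes 1) \circ \Delta = \id_{H_E^*}$ for the coalgebra $H_E^*$ collapses this to the identity of $H_E^* \otimes T_EM$. This leaves $\kappa_{M,E} \circ \rho_{M,E} \cong \eta_{M,E}$, as desired.

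I expect the only point requiring any care — and the main, rather minor, obstacle — to be the bookkeeping of tensor factors: one must check that the $\epsilon_E$ supplied by \Cref{lem:commdia} lands on the same copy of $H_E^*$ that $\Delta$ has just duplicated, so that the left (rather than right) counit law is the one that applies. Everything else is purely formal and reads off directly from the definitions of $\rho_{M,E}$, $\kappa_{M,E}$, and $\eta_{M,E}$ given above.
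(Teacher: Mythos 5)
Your proof is correct and is essentially identical to the paper's: both apply \Cref{lem:commdia} with $f = \kappa_{M,E}$, unwind $\kappa_{M,E}^{\#} = (\Delta \otimes 1)\circ \eta_{M,E}$ from the definition, and conclude via the counit identity $(\epsilon_E \otimes 1)\circ \Delta = \mathrm{id}_{H_E^*}$. Your extra remark about which tensor factor $\epsilon_E$ hits is exactly the bookkeeping implicit in the paper's diagram, so there is nothing to add.
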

\begin{proof}
	By \Cref{lem:commdia} there is a commutative diagram
	\[
\begin{tikzcd}[column sep=0.75in]
	M \arrow{r}{\rho_{M,E}} \arrow[swap]{d}{\kappa_{M,E}^{\#}} & T_EM \arrow{d}{\kappa_{M,E}} \\
	H_E^* \otimes H_E^* \otimes T_EM \arrow[swap]{r}{\epsilon_{E} \otimes 1 \otimes 1} & H_E^* \otimes T_EM,
\end{tikzcd}
\]
But by definition $\kappa_{M,E}^{\#} = (\Delta \otimes 1)\circ \eta_{M,E}$. Since $(\epsilon_E \otimes 1)\circ \Delta \colon H_E^* \to H_E^*$ is the identity, we deduce that $ \kappa_{M,E} \circ \rho_{M,E} \cong \eta_{M,E}$, as claimed. 
\end{proof}
\begin{rem}
By \eqref{eq:adjoint} we also deduce that $\kappa_{M,E}^{\#} \cong (1 \otimes \kappa_{M,E}) \circ \eta_{M,E}$ and hence $(\Delta \otimes 1) \circ \eta_{M,E} \cong (1 \otimes \kappa_{M,E}) \circ \eta_{M,E}$. This can be used to show the coassociativity of the $H_E^*$-comodule structure on $T_EM$, i.e., that the diagram
\[
\begin{tikzcd}
T_EM \arrow[r, "{\kappa_{M,E}}"] \arrow[d, "{\kappa_{M,E}}"'] & H_E^* \otimes T_EM \arrow[d, "{1 \otimes \kappa_{M,E}}"] \\
H_E^* \otimes T_EM \arrow[r, "\Delta \otimes 1"']             & H_E^* \otimes H_E^* \otimes T_EM                    
\end{tikzcd}
\] commutes. Indeed, taking adjoints, it suffices to show that the diagram	
	\[
	\begin{tikzcd}
M \arrow[r, "{\eta_{M,E}}"] \arrow[d, "{\eta_{M,E}}"'] & H_E^* \otimes T_EM \arrow[r, "\Delta \otimes 1"]                         & H_E^* \otimes H_E^* \otimes T_EM \arrow[d, "{1 \otimes 1 \otimes \kappa_{M,E}}"] \\
H_E^* \otimes T_EM \arrow[r, "\Delta \otimes 1"']      & H_E^* \otimes H_E^* \otimes T_EM \arrow[r, "1 \otimes \Delta \otimes 1"'] & H_E^* \otimes H_E^* \otimes H_E^* \otimes T_EM
\end{tikzcd}
	\]
	commutes. To see this, first observe that $(1 \otimes \Delta \otimes 1)\circ (\Delta \otimes 1) \cong (\Delta \otimes 1 \otimes 1) \circ (\Delta \otimes 1)$. We then have
	\[
	\begin{split}
(1 \otimes \Delta \otimes 1) \circ (\Delta \otimes 1) \circ \eta_{M,E} &\cong (\Delta \otimes 1 \otimes 1) \circ (\Delta \otimes 1) \circ \eta_{M,E} \\
&\cong (\Delta \otimes 1 \otimes 1)  \circ (1 \otimes \kappa_{M,E}) \circ \eta_{M,E}\\
& \cong (1 \otimes 1 \otimes \kappa_{M,E}) \circ (\Delta \otimes 1) \circ \eta_{M,E}
\end{split}
	\]
	as required.
\end{rem}
\subsection{Components of the \texorpdfstring{$T$}{T}-functor}\label{sec:tfunctorcomponent}
Given an unstable algebra $R$, we can define a category $R-\cU$, whose objects are unstable $\cA$-modules $M$ together with $\cal{A}$-linear structure maps $R \otimes M \to M$ which make $M$ into an $R$-module, and whose morphisms are the $\cal{A}$-linear maps which are also $R$-linear. The full subcategory consisting of the finitely generated $R$-modules will be denoted $R_{fg}-\cU$. 

For any unstable algebra $R$, and $\cal{K}$-morphism $f \colon R \to H_E^*$, we define $T_E(R;f)$ as the tensor product $T_E(R) \otimes_{T^0_E(R)} \F_p(f)$, where $\F_p(f)$ denotes $\F_p$ with $T_E(R)$ module structure induced by the adjoint of the map $f \colon R \to H_E^*$.

We note that $T^0_E(R)$ is a $p$-Boolean algebra \cite[Section 3.8]{schwartz_book}, and hence $\F_p(f)$ is a flat $T^0_E(R)$-module (since any module over a $p$-Boolean algebra is flat). Since $R$ is finitely generated as an algebra, there are only finitely many $\cK$-maps $f \colon R \to H_E^*$. By \cite[Theorem 3.8.6]{schwartz_book} there is an isomorphism $T_E^0(R) \cong \F_p^{\Hom_{\cK}(R,H_E^*)}$ (of $p$-Boolean algebras even), and hence  
\[ 
T_E(R) \cong T_E(R) \otimes_{T_E^0(R)}T_E^0(R) \cong \bigoplus_{f \in \Hom_{\cK}(R,H_E^*)}T_E(R) \otimes_{T_E^0(E)}\F_p(f).  
\] 
In other words, the $T$-functor splits into components:

\begin{equation}\label{eq:tfunctorcomponent}
T_E(R) \cong \bigoplus_{f \in \Hom_{\cK}(R,H_{E}^*)}T_E(R;f). 
\end{equation}
For $M \in R-\cU$ and $f \in \Hom_{\cK}(R,H_E^*)$ we similarly define 
\[
T_E(M;f) = T_EM \otimes_{T_E^0R}\F_p(f) \cong T_EM \otimes_{T_ER}T_E(R;f).
\]
Then $T_EM$ admits a similar decomposition:
\[
T_E(M) \cong \bigoplus_{f \in \Hom_{\cK}(R,H_{E}^*)}T_E(M;f). 
\]
Let $q_f \colon T_E(M) \to T_E(M;f)$ denote the projection map. We can then define a morphism $\rho_{M,(E,f)} \colon M \to T_E(M;f)$ as the composite $q_f \circ \rho_{M,E}$. The target $T_E(M;f)$ has the structure of an unstable $T_E(R;f)$-module, and the map $\rho_{R,(E,f)}$ makes $T_E(M;f)$ into an unstable $R$-module. In fact, for $M \in R_{fg}-\cU$ there is a functor $\bA_R \to R_{fg}-\cU$, given by assigning to $(E,f)$ the component $T_E(M;f)$, see \cite[Section 1.4]{Henn1996Commutative}. 

We also define $\eta_{M,(E,f)} \colon M \to H_E^* \otimes T_E(M;f)$ as the composite $q_f \circ \eta_{M,E}$. Finally, the map $\kappa_{M,E}$ descends to a morphism $\kappa_{M,(E,f)} \colon T_E(M;f) \to H_E^* \otimes T_E(M;f)$. This still defines a comodule structure on $T_E(M;f)$, see \cite[p.~46]{HennLannesSchwartz1995Localizations}.

The following is a consequence of \Cref{lem:comm}.
\begin{lem}\label{lem:commcomponent}
	There is an isomorphism
	\[
\eta_{M,(E,f)} \cong \kappa_{M,(E,f)} \circ \rho_{M,(E,f)}
	\]
  \begin{rem}\label{rem:spaces}
    For any space $X$, there is an evaluation map $BE \times \Map(BE,X) \to X$, which induces $H^*(X) \to H_E^* \otimes H^*\Map(BE,X)$. Taking adjoints, we obtain a map
    \[
T_EH^*(X) \to H^*\Map(BE,X). 
    \]
    In fact, for each map $f \colon BE \to X$, we obtain a map $T_E(H^*(X);f) \to H^*(\Map(BE,X)_{f})$, which makes the diagram 
    \[
\begin{tikzcd}
  H^*(X) \arrow[swap]{d}{\rho_{H^*(X),(E,f)}} \arrow[equal]{r} & H^*(X) \arrow{d} \\
  T_E(H^*(X);f) \arrow{r} & H^*(\Map(BE,X)_f),
\end{tikzcd}
    \]
    commute, where the right hand vertical morphism is induced by evaluation $\Map(BE,X)_f \to X$. Under some assumptions \cite[Corollary 3.4.3]{lannes_ihes} the bottom morphism is an equivalence. In particular, this holds if $X$ is a $p$-complete space such that $H^*(X)$ is of finite type, and that $T_E(H^*(X);f)$ is of finite type, and $T_E(H^*(X);f)$ is zero in degree 1. 
  \end{rem}
\end{lem}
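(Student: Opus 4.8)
The plan is to reduce the statement to \Cref{lem:comm} by unwinding the definitions of the component maps and chasing the projection $q_f \colon T_EM \to T_E(M;f)$ through the relevant diagrams. All three maps $\rho_{M,(E,f)}$, $\eta_{M,(E,f)}$, $\kappa_{M,(E,f)}$ are obtained from their ``global'' counterparts $\rho_{M,E}$, $\eta_{M,E}$, $\kappa_{M,E}$ by applying $q_f$ (to the appropriate tensor factor), so the identity ought to follow formally once the compatibilities of $q_f$ with these maps are recorded.

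First I would recall the three relevant definitions: $\rho_{M,(E,f)} = q_f \circ \rho_{M,E}$, while $\eta_{M,(E,f)} = (1 \otimes q_f) \circ \eta_{M,E}$, applying $q_f$ to the $T_EM$-tensor factor. The crucial input is that $\kappa_{M,(E,f)}$ is, by definition, the descent of $\kappa_{M,E}$ along $q_f$; concretely, this means that the square
\[
\begin{tikzcd}
T_EM \arrow{r}{\kappa_{M,E}} \arrow[swap]{d}{q_f} & H_E^* \otimes T_EM \arrow{d}{1 \otimes q_f} \\
T_E(M;f) \arrow[swap]{r}{\kappa_{M,(E,f)}} & H_E^* \otimes T_E(M;f)
\end{tikzcd}
\]
commutes, i.e.\ $\kappa_{M,(E,f)} \circ q_f \cong (1 \otimes q_f) \circ \kappa_{M,E}$. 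This is precisely the assertion, cited from \cite[p.~46]{HennLannesSchwartz1995Localizations}, that $\kappa_{M,E}$ induces a comodule structure on each summand $T_E(M;f)$ of the decomposition $T_EM \cong \bigoplus_f T_E(M;f)$.

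With these in hand the computation is a short diagram chase:
\[
\begin{split}
\kappa_{M,(E,f)} \circ \rho_{M,(E,f)} &\cong \kappa_{M,(E,f)} \circ q_f \circ \rho_{M,E} \\
&\cong (1 \otimes q_f) \circ \kappa_{M,E} \circ \rho_{M,E} \\
&\cong (1 \otimes q_f) \circ \eta_{M,E} \\
&\cong \eta_{M,(E,f)},
\end{split}
\]
where the first step is the definition of $\rho_{M,(E,f)}$, the second is the descent square above, the third is \Cref{lem:comm}, and the last is the definition of $\eta_{M,(E,f)}$.

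The only genuine subtlety, and hence the step I would be most careful about, is checking that $\kappa_{M,(E,f)}$ really satisfies the descent identity $\kappa_{M,(E,f)} \circ q_f \cong (1 \otimes q_f) \circ \kappa_{M,E}$ on the nose, rather than merely being \emph{some} comodule structure on the component. Since $q_f$ is the projection onto a direct summand in the splitting of $T_EM$ and $\kappa_{M,E}$ is $T_E^0R$-linear, this is exactly what it means for $\kappa_{M,E}$ to restrict to the summand; no choice is involved, so the identity holds strictly. Everything else is purely formal, so I expect no further obstacle.
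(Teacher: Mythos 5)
Your proposal is correct and is exactly the argument the paper intends: the paper offers no written proof beyond the sentence that the lemma "is a consequence of \Cref{lem:comm}", and your diagram chase (unwinding $\rho_{M,(E,f)} = q_f \circ \rho_{M,E}$, $\eta_{M,(E,f)} = (1 \otimes q_f)\circ \eta_{M,E}$, and the descent square for $\kappa_{M,(E,f)}$ cited from Henn--Lannes--Schwartz) is precisely the natural filling-in of that deduction. Your attention to the descent identity holding on the nose, via compatibility of $\kappa_{M,E}$ with the idempotent splitting of $T_E M$, is the right point to isolate and matches the paper's citation of \cite[p.~46]{HennLannesSchwartz1995Localizations}.
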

\subsection{Central objects of a Noetherian unstable algebra}
Given a group $G$, Quillen introduced the category $\bA_G$ with objects $E < G$ elementary abelian subgroups of $G$, and morphisms the monomorphisms $E \to E'$ given by subconjugation in $G$. For a given unstable algebra, Rector \cite{Rector1984Noetherian} introduced the category $\bA_R$ we define now - as we shall see, when $R = H_G^*$ is the cohomology of a compact Lie group, then there is an equivalence of categories, $\bA_G \simeq \bA_{H_G^*}$.
\begin{defn}
Let $R$ be an unstable algebra over the Steenrod algebra. The category $\mathbf{A}_R$ is the category with objects pairs $(E,f)$ where $E$ is an elementary abelian $p$-group and $f \colon R \longrightarrow H_E^*$ is a morphism of unstable algebras such that $H_E^*$ is a finitely generated $R$-module via $f$. A morphism $(E_1,f_1) \longrightarrow (E_2,f_2)$ is a group homomorphism $\alpha \colon E_1 \longrightarrow E_2$ such that $f_1= H^*(\alpha)f_2$.
\end{defn}

We have the following two results about the category $\bA_R$. First we note that any Noetherian unstable algebra $R$ has finite Krull dimension $\dim(R)$. 
\begin{prop}\label{prop:rectorprops} Let $R$ be a Noetherian unstable algebra with Krull dimension $r$. 
	\begin{enumerate}
		\item The category $\bA_R$ has a finite skeleton. 
		\item There is an $\mathcal{F}$-isomorphism
		\[
R \longrightarrow \varprojlim_{\bA_R} H_E^*
		\]
		and hence $\dim(R) = \max\{ \dim(E) \mid (E,f) \in \bA_R \}$ and  $\dim(E) \le r$ for each $(E,f) \in \bA_R$. 
		\item If $(E_1,f_1) \longrightarrow (E_2,f_2)$ is a morphism in $\bA_R$, then $\alpha \colon E_1 \longrightarrow E_2$ is a monomorphism. 
	\end{enumerate}
\end{prop}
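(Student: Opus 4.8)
The plan is to treat the three parts separately: parts (1) and (3) are elementary once we exploit that each structure map $f$ makes $H_E^*$ a finitely generated, hence module-finite, object, while part (2) rests on a deep detection theorem that I would invoke rather than reprove.

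For (1), the key observation is that module-finiteness bounds the rank. If $(E,f) \in \bA_R$ then $H_E^*$ is a finitely generated $R$-module via $f$, so it is module-finite over the image $\im(f) \subseteq H_E^*$; since $\im(f)$ is a quotient of $R$ and integral extensions preserve Krull dimension, $\rank(E) = \dim H_E^* = \dim \im(f) \le \dim R = r$. Thus only the $r+1$ isomorphism classes of elementary abelian $p$-groups of rank $\le r$ can occur. For each such $E$ the set $\Hom_\cK(R,H_E^*)$ is finite (as recalled in \Cref{sec:tfunctorcomponent}, using that $R$ is finitely generated as an algebra), so there are finitely many isomorphism classes of objects $(E,f)$; choosing one representative in each class and keeping all morphisms between them gives a finite skeleton.

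For (2), I would take the $\mathcal{F}$-isomorphism $R \to \varprojlim_{\bA_R} H_E^*$ as input, citing Rector (the unstable-algebra analogue of Quillen's stratification). To extract the dimension formula, set $P = \prod_{(E,f)} H_E^*$, the finite product over the skeleton from part (1), and consider the ring map $R \to P$ assembled from the $f$. As a finite product of finitely generated $R$-modules, $P$ is itself a finitely generated $R$-module, so $R \to P$ is module-finite onto its image $R' = \im(R \to P)$. The $\mathcal{F}$-isomorphism identifies $\ker(R \to P) = \ker(R \to \varprojlim_{\bA_R} H_E^*)$ with the nilradical, so $\dim R = \dim R'$; and module-finite injective extensions preserve Krull dimension, so $\dim R' = \dim P = \max_{(E,f)} \dim H_E^* = \max \rank(E)$. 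The bound $\rank(E) \le r$ for each object then reappears exactly as in part (1), or simply as $\rank(E) \le \max \rank(E) = r$.

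For (3), suppose $\alpha \colon E_1 \to E_2$ underlies a morphism $(E_1,f_1) \to (E_2,f_2)$, so $f_1 = H^*(\alpha) \circ f_2 = \alpha^* \circ f_2$, and assume toward a contradiction that $K = \ker(\alpha) \ne 0$. Factoring $\alpha$ as $E_1 \twoheadrightarrow E_1/K \hookrightarrow E_2$ shows $\alpha^*$ factors through the inflation $H^*(E_1/K) \to H_{E_1}^*$, whose image has Krull dimension at most $\rank(E_1/K) = \rank(E_1) - \rank(K) < \rank(E_1)$. Hence $\im(f_1) \subseteq \im(\alpha^*)$ has dimension $< \rank(E_1)$. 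But $f_1$ makes $H_{E_1}^*$ module-finite over $\im(f_1)$, forcing $\dim \im(f_1) = \dim H_{E_1}^* = \rank(E_1)$, a contradiction; so $\alpha$ is a monomorphism. The only genuinely hard ingredient is the $\mathcal{F}$-isomorphism in (2), which I would cite; everything else is commutative algebra, and the one subtlety to handle with care is that Krull dimension can jump under passage to subrings, which is why I route the dimension computation through the module-finite (integral) extension $R' \hookrightarrow P$ rather than arguing directly with the subring $\varprojlim_{\bA_R} H_E^* \subseteq P$.
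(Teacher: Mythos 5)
Your proposal is correct, but it takes a more self-contained route than the paper, whose proof of this proposition is entirely by citation: (1) is quoted from Rector, the $\mathcal{F}$-isomorphism in (2) is quoted from Rector ($p=2$) and Broto--Zarati ($p$ odd) with the dimension formula attributed to ``Quillen's argument,'' and (3) is quoted as a consequence of Quillen's Corollary 2.4. What you have written out --- the rank bound via module-finiteness and integrality in (1), the passage through the module-finite extension $R' \hookrightarrow P$ in (2), and the kernel-factorization argument in (3) --- is essentially the commutative-algebra content of those cited results, so your proof buys self-containedness at the cost of length, while sharing the same single deep input (the $\mathcal{F}$-isomorphism, which both you and the paper cite rather than prove). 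Three small points. First, credit the odd-primary case of the $\mathcal{F}$-isomorphism to Broto--Zarati; Rector's theorem is the $p=2$ case. Second, in (2) the $\mathcal{F}$-isomorphism only gives that $\ker(R \to P)$ \emph{consists of} nilpotent elements, not that it equals the nilradical (e.g.\ for $R = H_E^*$ with $p$ odd the kernel is zero but the nilradical is not); the containment is all you use, so this is a harmless imprecision. Third, in (3) you can avoid appealing to monotonicity of Krull dimension under passage to subrings (which does hold for finitely generated $\F_p$-algebras, but needs an argument, and sits uneasily next to your own closing caveat about subrings): since $\im(f_1) \subseteq \im(\alpha^*)$, any finite set of module generators of $H_{E_1}^*$ over $\im(f_1)$ also generates it over the larger subring $\im(\alpha^*)$, so $H_{E_1}^*$ is module-finite over $\im(\alpha^*)$, whence
\[
\rank(E_1) = \dim H_{E_1}^* = \dim \im(\alpha^*) \le \rank(E_1/K) < \rank(E_1),
\]
a contradiction --- using only invariance of dimension under module-finite extensions, which you already invoke elsewhere.
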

\begin{proof}
For (1) see \cite[Proposition 2.3]{Rector1984Noetherian}. The $\mathcal{F}$-isomorphism in (2) is due to Rector \cite[Theorem 1.4]{Rector1984Noetherian} for $p = 2$ and Broto--Zarati \cite[Theorem 1.3]{BrotoZarati1988Nillocalization} for $p$ odd. Quillen's argument \cite[Section 7]{Quillen1971spectrum} shows that
\[
\dim(R) = r=\max \{ \dim(E) \mid (E,f) \in \bA_R \}. 
\]
 This also implies the last claim in (2). Finally, (3) is a consequence of \cite[Corollary 2.4]{Quillen1971spectrum}. 
\end{proof}

We now come to the crucial definition of a central object, which is due to Dwyer and Wilkerson in the case $M = R$ \cite{DwyerWilkerson1992cohomology}. 
\begin{defn}\label{defn:centralobject}
	Let $R$ be an unstable algebra, then a non-zero pair $(E,f) \in \bA_R$ is central if $\rho_{R,(E,f)} \colon R \to T_E(R;f)$ is an isomorphism. More generally, if $M \in R-\cU$, then we say that $(E,f) \in \bA_R$ is $M$-central if $\rho_{M,(E,f)} \colon M \to T_E(M;f)$ is an isomorphism. 
\end{defn}
\begin{ex}\label{ex:central}
	For any map $f \colon H_E^* \to H_E^*$, the map $\rho_{H_E^*,(E,f)} \colon H_E^* \to T_E(H_E^*,f)$ is an isomorphism. This is a consequence of Lannes' computation of $T_E(H_E^*)$ \cite[Section 3.4.4]{lannes_ihes}, see \cite[Lemma 3.3]{DwyerMillerWilkerson1992Homotopical}. 
\end{ex}
\begin{ex}\label{ex:pgroup}
Let $G$ be a finite $p$-group, and $E < G$ an elementary abelian $p$-subgroup. We obtain an induced morphism $\res_{G,E}^* \colon H_G^* \to H_E^*$, and the pair $(E,\res_{G,E}^*) \in \bA_{H_G^*}$. Then, $H_G^* \to T_E(H_G^*;\res_{G,E}^*)$ is central if and only if $E<G$ is a central subgroup. The only if direction follows from Lannes' computation of $T_EH_G^*$ \cite{lannes_ihes}, while the converse is a theorem of Mislin \cite{Mislin1992Cohomologically}. For a general compact Lie group, the only if direction is still true, but the converse is false. We refer the reader to \Cref{sec:borel} for the details. 
\end{ex}
\begin{rem}
	Dwyer and Wilkerson use slightly different terminology; what we call central, they call a central monomorphism. 
\end{rem}
One of the key properties of Broto--Henn's proof of Duflot's depth theorem is that for any central subgroup $C$ of a compact Lie group, $H_G^*$ is a $H_C^*$-comodule. A similar result occurs for general unstable algebras. 
\begin{prop}\label{prop:comdule_central}
	Let $R$ be a connected Noetherian unstable algebra, and let $M \in R-\cU$. If $(C,g)$ is a non-trivial $M$-central object in $\bA_R$, then $M$ is a $H_C^*$-comodule.
\end{prop}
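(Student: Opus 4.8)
The plan is to transport the comodule structure from $T_C(M;g)$ to $M$ along the isomorphism supplied by centrality. By \Cref{defn:centralobject}, the assumption that $(C,g)$ is $M$-central means precisely that $\rho := \rho_{M,(C,g)} \colon M \to T_C(M;g)$ is an isomorphism in $R-\cU$, so in particular $\rho^{-1}$ exists. On the other hand, as recorded in \Cref{sec:tfunctorcomponent}, the map $\kappa := \kappa_{M,(C,g)} \colon T_C(M;g) \to H_C^* \otimes T_C(M;g)$ endows $T_C(M;g)$ with the structure of an $H_C^*$-comodule. I would therefore define the coaction on $M$ to be the composite
\[
\psi := (1 \otimes \rho^{-1}) \circ \kappa \circ \rho \colon M \longrightarrow H_C^* \otimes M,
\]
which, by \Cref{lem:commcomponent}, may equivalently be written as $\psi = (1 \otimes \rho^{-1}) \circ \eta_{M,(C,g)}$.

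It then remains to verify the two comodule axioms, and each reduces formally to the corresponding property of $\kappa$. For counitality one computes
\[
(\epsilon_C \otimes 1)\circ \psi = \rho^{-1} \circ (\epsilon_C \otimes 1) \circ \kappa \circ \rho = \rho^{-1}\circ \rho = \id_M,
\]
using the identity $(\epsilon_C \otimes 1)\circ \kappa = \id$ for the component comodule, which is the analogue of \eqref{eq:comodidentity}. For coassociativity, inserting the definition of $\psi$ into both $(\Delta \otimes 1)\circ \psi$ and $(1 \otimes \psi)\circ \psi$ and cancelling the resulting $\rho^{-1}\circ \rho$ factors reduces the claim to the identity $(\Delta \otimes 1)\circ \kappa = (1 \otimes \kappa)\circ \kappa$, i.e.\ the coassociativity of the comodule structure $\kappa$ (established for the components of $\kappa_{M,E}$; see the Remark following \Cref{lem:comm}).

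Conceptually there is nothing to prove beyond transport of structure along an isomorphism: since $\rho$ is an isomorphism of unstable $R$-modules and its target is an $H_C^*$-comodule, the source $M$ inherits a unique comodule structure making $\rho$ a comodule isomorphism. Accordingly, I do not expect a genuine obstacle; the only points requiring care are bookkeeping, namely confirming that the structure being transported is the component comodule $\kappa_{M,(C,g)}$ rather than the uncut $\kappa_{M,E}$, and that $\epsilon_C \otimes 1$ commutes past $1 \otimes \rho^{-1}$ in the counit computation. The non-triviality hypothesis on $(C,g)$ plays no role in producing the comodule structure itself; it merely ensures that $H_C^*$ is a non-trivial coalgebra, so that the resulting comodule structure carries content.
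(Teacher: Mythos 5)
Your proposal is correct and is essentially identical to the paper's own proof: the paper also defines the coaction as $\Psi_M = (1 \otimes \rho_{M,(C,g)}^{-1}) \circ \kappa_{M,(C,g)} \circ \rho_{M,(C,g)}$, i.e.\ transports the comodule structure on $T_C(M;g)$ along the centrality isomorphism. Your explicit verification of counitality and coassociativity simply spells out details the paper leaves implicit (citing \cite{HennLannesSchwartz1995Localizations} and the remark following \Cref{lem:comm}).
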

\begin{proof}
	We recall that $\kappa_{M,(C,g)} \colon T_C(M;g) \to H_C^* \otimes T_C(M;g)$ gives $T_C(M;g)$ the structure of a $H_C^*$-comodule. Since $(C,g)$ is $M$-central, $\rho_{M,(C,g)} \colon M \to T_C(M;g)$ is an isomorphism. It follows that the composite  \[
\xymatrix@C=1.2cm{\Psi_{M} \colon M \ar[r]_-{\cong}^-{\rho_{M,(C,g)}} & T_C(M;g) \ar[r]^-{\kappa_{M,(C,g)}}& H_C\otimes T_C(M;g)\ar[r]_-{\cong}^-{1 \otimes \rho^{-1}_{M,(C,g)}} &   H_C^* \otimes M
} 
 \]
 gives $M$ the structure of a $H_C^*$-comodule. 
\end{proof}
	As remarked previously, we do not need the full strength of the previous proposition, but rather just the following corollary, which we state for emphasis. 
\begin{cor}\label{cor:comodidentity}
	The composite 
	 \[
\xymatrix{M \ar[r]^-{\Psi_M} & H_C^* \otimes M \ar[r]^-{\epsilon_C \otimes 1} & M}
 \]
 is the identity. 
\end{cor}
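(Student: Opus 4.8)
The plan is to unwind the definition of $\Psi_M$ supplied in the proof of \Cref{prop:comdule_central} and reduce the claimed identity to the counit axiom for the $H_C^*$-comodule structure on $T_C(M;g)$. Recall that $\Psi_M = (1 \otimes \rho^{-1}_{M,(C,g)}) \circ \kappa_{M,(C,g)} \circ \rho_{M,(C,g)}$, so that $(\epsilon_C \otimes 1) \circ \Psi_M$ is the four-fold composite $(\epsilon_C \otimes 1) \circ (1 \otimes \rho^{-1}_{M,(C,g)}) \circ \kappa_{M,(C,g)} \circ \rho_{M,(C,g)}$. First I would push the counit past the isomorphism $\rho^{-1}_{M,(C,g)}$ using the elementary naturality relation $(\epsilon_C \otimes 1_M) \circ (1 \otimes \rho^{-1}_{M,(C,g)}) = \rho^{-1}_{M,(C,g)} \circ (\epsilon_C \otimes 1_{T_C(M;g)})$, valid for any map in the second factor (on elements both sides send $h \otimes t$ to $\epsilon_C(h)\,\rho^{-1}_{M,(C,g)}(t)$).

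Having done this, the composite becomes $\rho^{-1}_{M,(C,g)} \circ (\epsilon_C \otimes 1) \circ \kappa_{M,(C,g)} \circ \rho_{M,(C,g)}$, and the heart of the argument is the component version of the identity recorded in \eqref{eq:comodidentity}. Since $\kappa_{M,(C,g)}$ defines a $H_C^*$-comodule structure on $T_C(M;g)$, its counit axiom reads $(\epsilon_C \otimes 1) \circ \kappa_{M,(C,g)} = \id_{T_C(M;g)}$. Substituting this collapses the middle of the composite, leaving $\rho^{-1}_{M,(C,g)} \circ \rho_{M,(C,g)} = \id_M$, which is exactly the assertion. The whole argument is thus purely formal, a three-line diagram chase once the definition of $\Psi_M$ is expanded.

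The only step that genuinely needs justification, rather than bookkeeping, is the component-level counit identity $(\epsilon_C \otimes 1) \circ \kappa_{M,(C,g)} = \id$. At the level of the full functor this is precisely \eqref{eq:comodidentity}, and it descends to the component $T_C(M;g)$ because $\kappa_{M,(C,g)}$ is obtained from $\kappa_{M,E}$ by restriction along the projection $q_g$ and continues to satisfy the comodule axioms, as noted immediately after the definition of $T_E(M;f)$. I expect this descent—checking that the counit and the projection $q_g$ are compatible—to be the one place requiring a moment's care; everything else is naturality of $\epsilon_C \otimes 1$ and the fact that $\rho_{M,(C,g)}$ is invertible by $M$-centrality.
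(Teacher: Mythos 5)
Your proof is correct and is exactly the argument the paper intends: the corollary follows by unwinding $\Psi_M = (1 \otimes \rho^{-1}_{M,(C,g)}) \circ \kappa_{M,(C,g)} \circ \rho_{M,(C,g)}$, sliding $\epsilon_C \otimes 1$ past the second tensor factor, and applying the component-level counit identity (the descent of \eqref{eq:comodidentity} to $T_C(M;g)$, which the paper records when defining $\kappa_{M,(E,f)}$ with a citation to Henn--Lannes--Schwartz), leaving $\rho^{-1}_{M,(C,g)} \circ \rho_{M,(C,g)} = \id_M$. The paper states the corollary without a separate proof precisely because this three-step chase is immediate from the construction of $\Psi_M$ in \Cref{prop:comdule_central}, so your write-up matches the paper's approach.
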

\section{Depth and detection}\label{sec:depth}
In this section we prove versions of the theorems of Duflot and Carlson, using the theory of unstable algebras and the $T$-functor studied in the previous section. 
\subsection{Depth and regular sequences}\label{sec:depth_algebra}
We begin by recall the concepts of depth and regular sequences. We recall that we always assume that $R$ is a connected unstable algebra, whose underlying $\F_p$-algebra is finitely generated and graded-commutative, and we let $\frak m = R^{+}$ denote the maximal ideal generated by elements in positive degrees. Occasionally we will write $\frak m_R$ to make the dependence on $R$ clear. 

Let $M$ be an $R$-module, then an $M$-regular sequence is a sequence $y_1,\ldots,y_n$ in $\frak m$ such that $y_i$ is a non-zero divisor on $M/(y_1,\ldots,y_{i-1})$ for $i = 1,\ldots, n$. If $M$ is finitely generated over $R$, we say that the depth of $M$, $\depth_R(M)$, is the supremum of the length of all $M$-regular sequences in $\frak m$.\footnote{If $M$ is not finitely generated, there are alternative definitions of depth, however all modules we consider in this paper will be finitely generated.} One can show that the maximal length of any $M$-regular sequence is unique, and any $M$-regular sequence can be extended to one of this maximal length, see e.g., \cite[Remark 12.2.3]{Carlson1995Depth}, or \cite[Theorem A.24]{Poulsen}.

For a proof of the following, see \cite[Proposition 12.2.1]{CarlsonTownsleyValeriElizondoZhang2003Cohomology}. 
\begin{lem}\label{lem:depth_regular}
  Let $M$ be a finitely generated $R$-module. A sequence $y_1,\ldots,y_m \in \frak m$ of homogeneous elements of $\frak m$ is an $M$-regular sequence if and only if $y_1,\ldots,y_m$ are algebraically independent in $R$ and $M$ is a free module over the polynomial subring $k[y_1,\ldots,y_m] \subseteq R$. 
\end{lem}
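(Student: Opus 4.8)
The plan is to prove both implications by induction on $m$ (the base case $m=0$ being trivial), throughout using the grading to replace the usual local-ring Nakayama arguments by their connected-graded counterpart: a bounded-below graded module $N$ with $N=\frak{m}N$ must vanish, by looking at its lowest nonzero degree. Since $R$ is connected and Noetherian, every finitely generated graded $R$-module is of finite type and bounded below, which is all the finiteness we will need.

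The reverse implication is the easy one. Assuming the $y_i$ are algebraically independent and $M$ is free over $P=k[y_1,\dots,y_m]$, write $M\cong\bigoplus_\alpha P(-a_\alpha)$. Then $y_1$ is a nonzerodivisor on each summand, hence on $M$, and $M/y_1M\cong\bigoplus_\alpha (P/(y_1))(-a_\alpha)$ is free over $P/(y_1)\cong k[y_2,\dots,y_m]$, with $y_2,\dots,y_m$ still algebraically independent. By the inductive hypothesis $y_2,\dots,y_m$ is an $(M/y_1M)$-regular sequence, so $y_1,\dots,y_m$ is $M$-regular.

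For the forward implication, suppose $y_1,\dots,y_m$ is $M$-regular. Then $y_1$ is a nonzerodivisor on $M$ and $y_2,\dots,y_m$ is $(M/y_1M)$-regular, so by induction $y_2,\dots,y_m$ are algebraically independent and $M/y_1M$ is free over $P'=k[y_2,\dots,y_m]$. Choose homogeneous $e_\alpha\in M$ lifting a homogeneous $P'$-basis of $M/y_1M$, let $\widetilde P=k[Y_1,\dots,Y_m]$ act on $M$ through $Y_i\mapsto y_i$, and form the graded free module $F=\bigoplus_\alpha \widetilde P(-\deg e_\alpha)$ together with the $\widetilde P$-linear map $\phi\colon F\to M$ sending the $\alpha$-th generator to $e_\alpha$. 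I claim $\phi$ is an isomorphism; granting this, $F$ is a nonzero free $\widetilde P$-module, so any $g\in\widetilde P$ with $g(\vec y)=0$ in $R$ acts as zero on $M\cong F$ and therefore vanishes (as $\widetilde P$ is a domain). This is exactly algebraic independence of $y_1,\dots,y_m$, and then $M\cong F$ is free over $P=\widetilde P$, completing the proof.

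Everything thus reduces to showing $\phi$ is an isomorphism, which I expect to be the main point. Here $Y_1$ is a nonzerodivisor on $F$ and $y_1$ a nonzerodivisor on $M$, and by construction the induced map $\bar\phi\colon F/Y_1F\to M/y_1M$ carries the basis $\{\bar{\widetilde e}_\alpha\}$ of the free $k[Y_2,\dots,Y_m]$-module $F/Y_1F$ to the $P'$-basis $\{\bar e_\alpha\}$; since $y_2,\dots,y_m$ are independent we have $P'\cong k[Y_2,\dots,Y_m]$, so $\bar\phi$ is an isomorphism. Applying the snake lemma to
\[
\begin{tikzcd}
0 \arrow{r} & F \arrow{r}{Y_1}\arrow{d}{\phi} & F \arrow{r}\arrow{d}{\phi} & F/Y_1F \arrow{r}\arrow{d}{\bar\phi} & 0\\
0 \arrow{r} & M \arrow{r}{y_1} & M \arrow{r} & M/y_1M \arrow{r} & 0
\end{tikzcd}
\]
and using $\ker\bar\phi=\operatorname{coker}\bar\phi=0$ shows that $Y_1$ acts surjectively on $\ker\phi$ and $y_1$ acts surjectively on $\operatorname{coker}\phi$; that is, $\ker\phi=Y_1\ker\phi\subseteq\frak{m}\ker\phi$ and $\operatorname{coker}\phi=\frak{m}\operatorname{coker}\phi$. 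Both modules are bounded below, so graded Nakayama forces $\ker\phi=\operatorname{coker}\phi=0$. The main obstacle is precisely this simultaneous control of freeness \emph{and} algebraic independence; the device that makes it work is comparing the free model $F$ with $M$ through the regular element $y_1$ via the snake lemma, together with bounded-below Nakayama, rather than arguing about ranks or Poincar\'e series directly.
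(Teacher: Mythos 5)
Your proof is correct. There is nothing in the paper to compare it against directly: the paper does not prove this lemma, but simply cites \cite[Proposition 12.2.1]{CarlsonTownsleyValeriElizondoZhang2003Cohomology}. Your argument is essentially the standard one that such references give---induction on $m$, with bounded-below graded Nakayama (inspect the lowest nonzero degree) replacing the local-ring Nakayama arguments---and the snake-lemma packaging of the key step is clean: comparing $M$ with the graded free model $F=\bigoplus_\alpha\widetilde P(-\deg e_\alpha)$ across the regular element $y_1$, and only afterwards extracting algebraic independence from faithfulness of the $\widetilde P$-action on the nonzero free module $F\cong M$, neatly avoids the circularity of trying to prove independence and freeness separately. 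Three small points deserve a sentence each in a polished write-up. First, when you choose homogeneous $e_\alpha\in M$ lifting a homogeneous $P'$-basis of $M/y_1M$, you are invoking the fact that a bounded-below graded module that is free over a connected graded ring admits a \emph{homogeneous} basis; the cheapest fix is to strengthen your inductive statement to ``graded free on a homogeneous basis,'' which is exactly what your construction of $\phi$ delivers, so the induction closes. Second, the statement implicitly assumes $M\neq 0$ (for $M=0$ every sequence is regular and $M$ is free, yet independence can fail); your independence argument needs $F\neq 0$, which for $M\neq 0$ follows since graded Nakayama gives $M/y_1M\neq 0$. This is a defect of the statement as quoted rather than of your proof, but it should be flagged. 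Third, for $p$ odd an element of odd degree squares to zero, hence can belong neither to a regular sequence on a nonzero finitely generated module nor to an algebraically independent set; so in both implications the $y_i$ are automatically of even degree (or $p=2$), the subring $k[y_1,\ldots,y_m]\subseteq R$ is honestly commutative, and the action of $\widetilde P=k[Y_1,\ldots,Y_m]$ on $M$ through $Y_i\mapsto y_i$ is well defined. With these remarks added, your proof is a complete, self-contained substitute for the citation.
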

We recall that the $\frak{m}$-torsion in an $R$-module $M$ is defined as 
\[
H_{\frak m}^0(M) = \{ x \in M \mid \text{ there exists } n \in \mathbb{N} \text{ such that } \frak{m}^nx = 0  \}
\]
The functor $H_{\frak m}^0(-)$ is left exact, and we define the local cohomology groups $H^i_{\frak m}(-)$ to be the right derived functors of $H_{\frak m}^0$. In fact, our module $M$ will usually be graded, so that our local cohomology groups of $M$ is bigraded, but we will usually suppress one grading. A reference for details on local cohomology is, for example, the book of Brodmann and Sharp \cite{BrodmannSharp2013Local}. We point out that $H^i_{\frak m}(M) \cong H^i_{\sqrt{\frak m}}(M)$ \cite[Proposition 7.3(2)]{24hours}. 

Local cohomology is related to depth is the following way, see \cite[Theorem 9.1]{24hours}. 
\begin{lem}\label{lem:depthlocalcohomology}
  If $M$ is a finitely generated $R$-module, then
  \[
\depth_R(M)  = \inf\{j \mid H_{\frak m}^j(M) \ne 0.\}
  \]
\end{lem}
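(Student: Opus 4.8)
The plan is to prove both inequalities simultaneously by induction on the quantity $d = \inf\{j \mid H_{\frak m}^j(M) \neq 0\}$, exploiting the fact that each local cohomology module $H_{\frak m}^j(M)$ is $\frak m$-power torsion, so that every element of $\frak m$ acts locally nilpotently on it.

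First I would settle the base case $d = 0$. By definition $H_{\frak m}^0(M)$ is precisely the $\frak m$-power torsion submodule of $M$, and passing to the smallest $n$ with $\frak m^n x = 0$ for a nonzero torsion element $x$ shows that $H_{\frak m}^0(M) \neq 0$ if and only if there is a nonzero element annihilated by all of $\frak m$, i.e.\ if and only if $\frak m \in \operatorname{Ass}_R(M)$. On the other hand $\depth_R(M) = 0$ exactly when every element of $\frak m$ is a zerodivisor on $M$; since the set of zerodivisors is the union of the associated primes and $\frak m$ is the unique graded maximal ideal, graded prime avoidance forces $\frak m \in \operatorname{Ass}_R(M)$ in that case. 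Hence $\depth_R(M) = 0 \iff H_{\frak m}^0(M) \neq 0 \iff d = 0$.

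For the inductive step, suppose $d \ge 1$, so $H_{\frak m}^0(M) = 0$ and $\frak m \notin \operatorname{Ass}_R(M)$. By graded prime avoidance $\frak m$ is not contained in the union of the associated primes of $M$, so there is a homogeneous element $y \in \frak m$ that is a nonzerodivisor on $M$. I would then apply $H_{\frak m}^*(-)$ to the short exact sequence $0 \to M \xrightarrow{y} M \to M/yM \to 0$ and read off the long exact sequence
\[
\cdots \to H_{\frak m}^{j-1}(M) \to H_{\frak m}^{j-1}(M/yM) \to H_{\frak m}^j(M) \xrightarrow{y} H_{\frak m}^j(M) \to \cdots.
\]
For $j < d$ both flanking terms $H_{\frak m}^{j-1}(M)$ and $H_{\frak m}^j(M)$ vanish, forcing $H_{\frak m}^i(M/yM) = 0$ for $i < d-1$. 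At level $j = d$ the sequence yields an isomorphism $H_{\frak m}^{d-1}(M/yM) \cong \ker\bigl(y \colon H_{\frak m}^d(M) \to H_{\frak m}^d(M)\bigr)$, and because $H_{\frak m}^d(M)$ is nonzero and $\frak m$-power torsion, multiplication by $y$ is locally nilpotent and hence has nonzero kernel. Therefore $\inf\{i \mid H_{\frak m}^i(M/yM) \neq 0\} = d - 1$. Combining this with the standard fact that $\depth_R(M/yM) = \depth_R(M) - 1$ for a nonzerodivisor $y$, the inductive hypothesis applied to $M/yM$ gives $\depth_R(M) = d$.

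The main obstacle, and the only genuinely nontrivial input, is the claim that each $H_{\frak m}^j(M)$ is $\frak m$-power torsion, which is what makes multiplication by $y$ locally nilpotent and thereby guarantees the nonvanishing of the kernel at the top level $j = d$. This is where one must invoke the actual construction of local cohomology (for instance as $\varinjlim_n \Ext_R^j(R/\frak m^n, M)$, each term of which is killed by a power of $\frak m$) rather than merely its formal long-exact-sequence properties. The remaining ingredients, namely graded prime avoidance and the behavior of depth under quotient by a regular element, are routine in the connected graded Noetherian setting.
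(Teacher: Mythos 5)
The paper offers no proof of \Cref{lem:depthlocalcohomology} at all: it simply cites \cite[Theorem 9.1]{24hours}. Your argument is, in substance, the standard proof lying behind that citation --- the $\frak m$-power-torsionness of local cohomology, prime avoidance to produce a homogeneous nonzerodivisor, and the long exact sequence attached to $0 \to M \xrightarrow{y} M \to M/yM \to 0$ --- so it is a legitimate self-contained replacement for the citation, and each individual step you write is correct: the base case identification $\depth_R(M)=0 \iff \frak m \in \operatorname{Ass}_R(M) \iff H^0_{\frak m}(M)\ne 0$ (using that $\frak m$ is maximal because $R$ is connected), the isomorphism $H^{d-1}_{\frak m}(M/yM) \cong \ker(y \colon H^d_{\frak m}(M) \to H^d_{\frak m}(M))$, and the local-nilpotence argument forcing that kernel to be nonzero.

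Two caveats, neither fatal. First, because you induct on $d = \inf\{j \mid H_{\frak m}^j(M) \ne 0\}$ rather than on $\depth_R(M)$, your proof as written only covers the case $d < \infty$; nothing in it excludes a nonzero finitely generated $M$ with every $H^j_{\frak m}(M) = 0$ but finite depth, which is precisely the configuration that would falsify the lemma. The repair stays inside your framework: either run the induction on $s = \depth_R(M)$ (always finite for nonzero finitely generated $M$, being bounded by the Krull dimension), where the same long exact sequence gives that $y$ acts injectively on the torsion module $H^j_{\frak m}(M)$ for $j < s$ (forcing it to vanish) and that $H^{s-1}_{\frak m}(M/yM) \ne 0$ embeds into $H^s_{\frak m}(M)$; or note that your own descent step shows $d = \infty$ would propagate to a quotient of depth $0$, contradicting your base case. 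Second, $R$ here is only graded-commutative (for $p$ odd, odd-degree elements square to zero), so the commutative-algebra inputs --- associated primes, graded prime avoidance, the long exact sequence of $H^*_{\frak m}$ --- should formally be applied over the strictly commutative subring $R^{\mathrm{ev}}$, exactly as in the remark following \Cref{lem:fgdepth}; since $\sqrt{\tilde{\frak m}R} = \frak m$, this changes nothing of substance.
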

Now suppose we have two connected Noetherian unstable algebras $R$ and $R'$, and a finite (graded) homomorphism $f \colon R \to R'$ (i.e., $R'$ is a finitely generated $R$-module via $f$). Because $f$ is finite, we have $\sqrt{\frak m_{R'}R} = \sqrt{\frak m_R}$. Using the independence theorem for local cohomology (\cite[Theorem 4.2.1]{BrodmannSharp2013Local} or \cite[Theorem 14.1.7]{BrodmannSharp2013Local} in the graded case) we then deduce the following from \Cref{lem:depthlocalcohomology}. 
\begin{lem}\label{lem:fgdepth}
  Let $R$ and $R'$ be connected Noetherian unstable algebras, and $f \colon R \to R'$ a finite homomorphism. Let $M$ be a finitely generated $R'$-module, then 
  \[
  \depth_R(M) = \depth_{R'}(M)
  \]
  where $M$ is an $R$-module by restriction of scalars. In particular, 
  \[
\depth_R(R') = \depth(R'). 
  \]
\end{lem}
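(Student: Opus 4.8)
The plan is to reduce the statement to \Cref{lem:depthlocalcohomology} via the independence theorem for local cohomology, exactly as the surrounding text suggests. The key structural input is that $f\colon R\to R'$ is \emph{finite}, which means $R'$ is a finitely generated $R$-module and, crucially, that $\sqrt{\frak m_{R'}R}=\sqrt{\frak m_R}$. This radical identity holds because for connected Noetherian unstable algebras a finite homomorphism sends the positive-degree maximal ideal $\frak m_R$ into an ideal whose radical is $\frak m_{R'}$: any element of $R'^{+}$ is integral over $R$, so some power lies in $\frak m_R R'$, giving $\sqrt{\frak m_R R'}=\frak m_{R'}$, and conversely $\frak m_R R'\subseteq \frak m_{R'}$.

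First I would fix a finitely generated $R'$-module $M$ and regard it as an $R$-module by restriction of scalars along $f$; since $R'$ is finitely generated over $R$, the module $M$ is also finitely generated over $R$, so both depths in the statement are defined and computable by \Cref{lem:depthlocalcohomology}. Next I would invoke the (graded) independence theorem for local cohomology \cite[Theorem 4.2.1]{BrodmannSharp2013Local}, which asserts that for the ring map $f\colon R\to R'$ and any $R'$-module $M$, the local cohomology of $M$ computed over $R'$ with respect to $\frak m_{R'}$ agrees with that computed over $R$ with respect to $\frak m_{R'}\cap R$ (equivalently, with respect to the extension/contraction along $f$). Concretely, $H^j_{\frak m_{R'}}(M)\cong H^j_{\frak m_R R'}(M)$ as the underlying abelian groups, compatibly with the two module structures.

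The final step combines this with the invariance of local cohomology under passing to the radical of the defining ideal, namely $H^i_{\frak m}(M)\cong H^i_{\sqrt{\frak m}}(M)$ recalled earlier from \cite[Proposition 7.3(2)]{24hours}. Applying the radical identity $\sqrt{\frak m_{R'}R}=\sqrt{\frak m_R}$ yields
\[
H^j_{\frak m_{R'}}(M)\;\cong\; H^j_{\frak m_R R'}(M)\;\cong\; H^j_{\sqrt{\frak m_R}}(M)\;\cong\; H^j_{\frak m_R}(M)
\]
for every $j$. Taking the infimum of the degrees $j$ in which these groups are nonzero and applying \Cref{lem:depthlocalcohomology} on each side gives $\depth_{R'}(M)=\depth_R(M)$. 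The special case $M=R'$ then reads $\depth_R(R')=\depth_{R'}(R')=\depth(R')$, since the depth of $R'$ as a module over itself is by definition $\depth(R')$.

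The main obstacle is not any deep calculation but rather the careful bookkeeping of the radical identity $\sqrt{\frak m_{R'}R}=\sqrt{\frak m_R}$ and checking that the hypotheses of the graded independence theorem are genuinely met here (in particular that $f$ being finite is exactly what licenses its use). One should also be slightly careful that the isomorphisms above are compatible with the grading, so that the \emph{graded} version of the independence theorem is the correct one to cite; this is why the excerpt points to both the ungraded \cite[Theorem 4.2.1]{BrodmannSharp2013Local} and the graded \cite[Theorem 14.1.7]{BrodmannSharp2013Local}.
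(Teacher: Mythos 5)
Your proposal is correct and follows essentially the same route as the paper: finiteness of $f$ gives the radical identity $\sqrt{\frak m_R R'} = \frak m_{R'}$, the (graded) independence theorem for local cohomology identifies $H^j_{\frak m_R}(M)$ with $H^j_{\frak m_{R'}}(M)$, and \Cref{lem:depthlocalcohomology} then converts this into equality of depths; your added justifications (integrality argument for the radical identity, finite generation of $M$ over $R$) are exactly the details the paper leaves implicit. The only nitpick is that your prose slightly misattributes the isomorphisms — the independence theorem gives $H^j_{\frak m_R R'}(M) \cong H^j_{\frak m_R}(M)$ (extension of an ideal of $R$, not contraction $\frak m_{R'} \cap R$), while $H^j_{\frak m_{R'}}(M) \cong H^j_{\frak m_R R'}(M)$ is radical invariance — but the displayed chain you actually use is valid.
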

A direct proof of this is the graded connected case is also given in \cite[Proposition 5.6.5]{NeuselSmith2002Invariant}.
\begin{rem}
  If the reader is worried about the use of graded-commutative, as opposed to strictly commutative graded rings, we note the following: A graded-commutative Noetherian ring $R$ is finitely generated over the strictly commutative subring $R^{\text{ev}}$, and the depth of $R$ is equal to the depth of $R$ as a module over the commutative graded ring $R^{\text{ev}}$. One way to see this is to use the characterization of depth in terms of local cohomology. Then, if we let $ \tilde{\frak m}$ denote the maximal ideal of $R^{\text{ev}}$, we have $\sqrt{ \tilde{\frak m}R} = \frak m$, so that $H_{ \tilde{\frak m}}^i(M) = H_{\frak m}^i(M)$. 
\end{rem}
\subsection{Duflot's depth theorem}
We are now in a position to prove our general version of Duflot's theorem. The proof follows closely the version given in \cite[Theorem 12.3.3]{CarlsonTownsleyValeriElizondoZhang2003Cohomology} or \cite[Theorem 3.17]{Totaro2014Group}. The reader should keep the example of $R = H_G^*$ and $S = H_G^*(X)$ for $G$ a compact Lie group and $X$ a finite $G$-CW complex in mind. 
\begin{thm}\sloppy\label{thm:duflot_regular}
  Let $R$ and $S$ be connected Noetherian unstable algebras, and $f \colon R \to S$ a finite $\cal{K}$-morphism, so that $S \in R_{fg}-\cal{U}$.  Suppose further that $(C,g) \in \bA_R$ is $S$-central. If $x_1,\ldots,x_n \in \frak m_R$ is a sequence of homogeneous elements in $R$ such that $x_1,\ldots,x_n$ form an $R$-regular sequence in $H_C^*$ (considered as an $R$-module via $gf$), then $x_1,\ldots,x_n$ is an $R$-regular sequence in $S$. 
  \end{thm}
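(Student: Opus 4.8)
The plan is to exploit the comodule structure furnished by \Cref{prop:comdule_central}. Since $(C,g)$ is $S$-central, $S$ becomes an $H_C^*$-comodule via a coaction $\Psi_S \colon S \to H_C^* \otimes S$, and because $\Psi_S$ is assembled from the $\cal{K}$-morphisms $\rho_{S,(C,g)}$ and $\kappa_{S,(C,g)}$ (and $T_C$ is monoidal), it is itself a morphism of unstable algebras. By \Cref{cor:comodidentity} the composite $(\epsilon_C \otimes 1)\circ \Psi_S$ is the identity, and $\epsilon_C \otimes 1$ is again a ring map; thus $\Psi_S$ is a split injection of rings with a ring-theoretic retraction. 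The strategy is to prove regularity in the free $H_C^*$-module $H_C^* \otimes S$ and then descend along this split injection.

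First I would establish the key compatibility: writing $\bar x_i = g(x_i) \in H_C^*$, I claim that $(1 \otimes \epsilon_S)\circ \Psi_S \circ f = g$ as maps $R \to H_C^*$, where $\epsilon_S$ is the augmentation of $S$. Granting this, $\Psi_S(f(x_i)) = \bar x_i \otimes 1 + (\text{terms in } H_C^* \otimes S^{>0})$. To prove the claim I would use $\Psi_S = (1 \otimes \rho_{S,(C,g)}^{-1})\circ \kappa_{S,(C,g)}\circ \rho_{S,(C,g)}$ together with \Cref{lem:commcomponent}, which identifies $\kappa_{S,(C,g)}\circ \rho_{S,(C,g)}$ with $\eta_{S,(C,g)}$, the naturality of $\eta$ with respect to $f$, and the fact that the augmentations of $T_C(R;g)$ and $T_C(S;g)$ compose correctly; one is then reduced to the adjunction identity $(1 \otimes \tilde g)\circ \eta_{R,C} = g$, where $\tilde g \colon T_CR \to \F_p$ is the component projection adjoint to $g$.

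With the leading terms in hand I would run a filtration argument. Filter $H_C^* \otimes S$ by the powers of the ideal $\mathfrak{a} = H_C^* \otimes S^{>0}$; the associated graded is $H_C^* \otimes \gr(S)$, a free $H_C^*$-module, and the initial form of $\Psi_S(f(x_i))$ is $\bar x_i \otimes 1$. Since $\bar x_1, \ldots, \bar x_n$ is a regular sequence on $H_C^*$ by hypothesis, these initial forms form a regular sequence on the associated graded; as the filtration is separated (everything is connected and of finite type), the standard principle that initial forms being regular implies the elements are regular shows that $\Psi_S(f(x_1)), \ldots, \Psi_S(f(x_n))$ is a regular sequence on $H_C^* \otimes S$.

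Finally I would descend along $\Psi_S$. Because $\Psi_S$ and its retraction $\epsilon_C \otimes 1$ are ring maps, both pass to the quotients by the ideals generated by $\Psi_S(f(x_1)), \ldots, \Psi_S(f(x_{i-1}))$, exhibiting $S/(f(x_1),\ldots,f(x_{i-1}))$ as a ring retract of the corresponding quotient of $H_C^* \otimes S$. A split injection of rings reflects non-zero-divisors, since $\Psi_S(y)\Psi_S(u) = 0$ gives $yu = 0$ after applying the retraction, so inductively each $f(x_i)$ is a non-zero-divisor on $S/(f(x_1),\ldots,f(x_{i-1}))$, proving that $x_1,\ldots,x_n$ is an $R$-regular sequence in $S$. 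The main obstacle is the compatibility identity of the second paragraph: it is exactly the step where the hypothesis ``regular on $H_C^*$'' is converted into information about $S$, and it requires threading the $T$-functor bookkeeping (naturality of $\rho$, $\eta$, $\kappa$ and the behaviour of the component augmentations) rather than any further commutative algebra.
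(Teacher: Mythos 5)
Your proof is correct, and it rests on the same structural input as the paper's --- the comodule splitting $(\epsilon_C \otimes 1)\circ \Psi_S = \mathrm{id}$ of \Cref{cor:comodidentity} together with a filtration argument reducing everything to the hypothesis on $H_C^*$ --- but the execution is genuinely different. The paper treats the splitting module-theoretically: it filters $H_C^* \otimes S$ by the internal degree of $S$ (the submodules $H_C^* \otimes S^{\ge j}$), deduces that $H_C^* \otimes S$ is free over $\F_p[x_1,\ldots,x_n]$, and then uses \Cref{lem:depth_regular} twice, once to translate the hypothesis into freeness of $H_C^*$ and once to convert ``summand of free, hence projective, hence free'' back into regularity of the sequence on $S$. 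You instead treat $\Psi_S$ as a split injection of unstable \emph{algebras}, prove that $\Psi_S(f(x_1)),\ldots,\Psi_S(f(x_n))$ is a regular sequence on $H_C^* \otimes S$ via initial forms for the $(H_C^* \otimes S^{>0})$-adic filtration, and descend non-zero-divisors along the ring retraction; this bypasses \Cref{lem:depth_regular} at the cost of the standard initial-form criterion, whose hypotheses (separatedness of each successive quotient) do hold here by homogeneity and connectedness, as you indicate. The most valuable point in your write-up is the compatibility $(1 \otimes \epsilon_S)\circ \Psi_S \circ f = g$: the paper's own argument needs exactly this fact --- its filtration quotients $H_C^* \otimes S^j$ are free over $\F_p[x_1,\ldots,x_n]$ only because the induced action on them is through $g$ --- but uses it silently, whereas in the group case it is evident from the construction of the coaction by the multiplication map $C \times G \to G$. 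Your reduction of this identity, via naturality of $\eta$, \Cref{lem:commcomponent}, and the adjunction identity, is correct; the one place to be careful is that the augmentation of $T_C(S;g)$ is only unambiguous because $S$-centrality makes $T_C(S;g) \cong S$ connected, which also forces the unique algebra map $h \colon S \to H_C^*$ with $hf = g$ implicit in your bookkeeping. So your proposal is a sound proof, and arguably a more self-contained one than the paper's.
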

\begin{proof}
  We recall that $S$ is a $H_C^*$-comodule, and in particular that the composite
	 \[
\xymatrix{S \ar[r]^-{\Psi_S} & H_C^* \otimes S \ar[r]^-{\epsilon_C \otimes 1} & S}
 \]
  is the identity (\Cref{cor:comodidentity}). The composite above is a sequence of $R$-modules, where we use $f \colon R \to S$ to induce the $R$-module structures. In particular, $S$ is a summand of $H_C^* \otimes S$ as an $R$-module, and hence also as an $\F_p[x_1,\ldots,x_n]$-module. We claim that $H_C^* \otimes S$ is a free $\F_p[x_1,\ldots,x_n]$-module. Note that this implies that $S$ is a projective $\F_p[x_1,\ldots,x_n]$-module, and hence a free $\F_p[x_1,\ldots,x_n]$-module \cite[Propositon 1.5.15]{BrunsHerzog1993CohenMacaulay}. By \Cref{lem:depth_regular} the sequence $x_1,\ldots,x_n$ is a regular sequence in $S$ as claimed. 

To see that $H_C^* \otimes S$ is a free $\F_p[x_1,\ldots,x_n]$-module, let $S^{\ge j} \subseteq S$ denote the submodule generated by homogeneous elements of degree at least $j$. This gives rise to a filtration of $H_C^* \otimes S$ by the $\F_p[x_1,\ldots,x_n]$-submodules $H_C^* \otimes S^{\ge j}$, with filtration quotients $H_C^* \otimes S^j$. 

  By \Cref{lem:depth_regular} and assumption $H_C^*$ is free over $\F_p[x_1,\ldots,x_n]$, and hence $H_C^* \otimes S^j$ is a finitely generated free $\F_p[x_1,\ldots,x_n]$-module (since in each degree $S^j$ is a finite-dimensional $\F_p$-vector space). This implies that the short exact sequences defining the filtration quotients all split, and so inductively we deduce that $H_C^* \otimes S$ is a free $\F_p[x_1,\ldots,x_n]$-module, as required.
\end{proof}
Duflot's theorem for an unstable algebra is an easy consequence. 
\begin{cor}\label{cor:duflot}
    Let $R$ and $S$ be connected Noetherian unstable algebras, and $f \colon R \to S$ a finite morphism, so that $S \in R_{fg}-\cal{U}$.  Suppose further that $(C,g) \in \bA_R$ is $S$-central, then $\depth_R(S) \ge \rank(C)$. 
 \end{cor}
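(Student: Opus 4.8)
The goal is \Cref{cor:duflot}: under the hypotheses of \Cref{thm:duflot_regular}, we want $\depth_R(S) \ge \rank(C)$.

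The plan is to reduce the depth bound to the existence of a single $R$-regular sequence of length $\rank(C)$ inside $S$, and then produce that sequence by applying \Cref{thm:duflot_regular}. The strategy is as follows. First, set $n = \rank(C)$. Since $C$ is elementary abelian of rank $n$, its mod $p$ cohomology $H_C^*$ is a polynomial (for $p=2$) or polynomial-tensor-exterior (for $p$ odd) algebra on $n$ generators in low degrees; in either case $H_C^*$ contains a polynomial subalgebra $\F_p[t_1,\dots,t_n]$ over which $H_C^*$ is a finitely generated free module, and in particular $H_C^*$ has depth exactly $n$. So there exists an $H_C^*$-regular sequence of length $n$ of homogeneous elements in $\frak m_{H_C^*}$.

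The main step is to upgrade such a regular sequence, which lives in $H_C^*$, to a sequence of elements of $R$ that restricts to a regular sequence in $H_C^*$ via $gf \colon R \to H_C^*$. Here I would use \Cref{prop:rectorprops}: the map $gf \colon R \to H_C^*$ is finite (both $g$ and $f$ are finite, $g$ because $(C,g) \in \bA_R$ makes $H_C^*$ a finitely generated $R$-module and $f$ by hypothesis), and by \Cref{lem:fgdepth} a finite map preserves depth, so it suffices to find a regular sequence in $R$ that becomes regular after applying $gf$. Concretely, since $gf$ is finite we have $\sqrt{\frak m_{H_C^*}\,\, } = \sqrt{(gf)(\frak m_R)}$ up to radicals, so for each of the $n$ polynomial generators $t_i$ of $H_C^*$ some power lies in the image of $\frak m_R$; alternatively, one chooses homogeneous $x_1,\dots,x_n \in \frak m_R$ whose images under $gf$ form a system of parameters, hence a regular sequence, in $H_C^*$. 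The existence of such a system of parameters pulled back along a finite map is standard once we know $\dim H_C^* = n \le \dim R$, which is exactly the content of \Cref{prop:rectorprops}(2).

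With $x_1,\dots,x_n \in \frak m_R$ chosen so that $x_1,\dots,x_n$ form an $R$-regular sequence in $H_C^*$ (via $gf$), \Cref{thm:duflot_regular} applies verbatim and tells us that $x_1,\dots,x_n$ is an $R$-regular sequence in $S$. A regular sequence of length $n$ forces $\depth_R(S) \ge n = \rank(C)$, which is the claim. The hard part is the middle step: guaranteeing that one can realize an $H_C^*$-regular sequence of full length $n$ by elements pulled back from $R$. I expect this to rest on the finiteness of $gf$ together with \Cref{prop:rectorprops}, which pins down $\dim H_C^* = \rank(C)$ and controls the passage between $\frak m_R$ and $\frak m_{H_C^*}$; the remaining bookkeeping (freeness over the polynomial subring, uniqueness of maximal regular sequence length) is routine given \Cref{lem:depth_regular} and \Cref{lem:fgdepth}.
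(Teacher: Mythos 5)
Your proposal is correct and takes essentially the same route as the paper: both arguments reduce the corollary to producing homogeneous elements of $\frak m_R$ that restrict to a regular sequence on the Cohen--Macaulay ring $H_C^*$, and then cite \Cref{thm:duflot_regular}. The paper obtains that middle step in one line from \Cref{lem:fgdepth} (finiteness gives $\depth_R H_C^* = \depth H_C^* = \rank(C)$, and $\depth_R$ is by definition computed with sequences from $\frak m_R$), which is your system-of-parameters pullback in disguise; only your first formulation of that step is off (a power of each $t_i$ lands in the \emph{ideal generated by} the image of $\frak m_R$, not in the image itself), but the alternative you give is the one that works.
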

\begin{proof}
  As is known, the cohomology of $H_C^*$ is Cohen--Macaulay. Moreover, by assumption $H_C^*$ is finitely generated as a $R$-module. It follows from \Cref{lem:fgdepth} that $\depth_{R}H_C^* = \depth H_C^* = \rank(C)=c$. This implies that there exist $x_1,\ldots,x_c$ in $R$ such that $x_1,\ldots,x_c$ form an $R$-regular sequence in $H_{C}^*$. By \Cref{thm:duflot_regular} the sequence $x_1,\ldots,x_c$ forms an $R$-regular sequence in $S$ via $f$, and so $\depth_R(S) \ge c$.
\end{proof}
\begin{ex}
	Let $R = H^*(\stc)$, the cohomology of the 3-connected cover of $S^3$. We recall that $\stc$ fits into a principal fibration $BS^1 \to \stc \to S^3$. There is a single non-trivial object in $\bA_R$, namely $(\Z/p,f)$, where $f \colon H^*(\stc) \to H^*_{\Z/p}$ is the map induced by the inclusion $\Z/p \subset S^1$. Moreover, we have $T_{\Z/p}(H^*(\stc);f) \cong H^*(\stc)$, see \cite[Section 3]{AguadeBrotoNotbohm1994Homotopy}. It follows that $\depth (H^*(\stc)) \ge 1$. In fact, $\dim( H^*(\stc)) \le 1$ (\Cref{prop:rectorprops}), so we must have that $\depth (H^*(\stc)) =1$. Of course, one calculates directly that $H^*(\stc) \cong \F_p[x] \otimes \Lambda_{\F_p}(y)$ is Cohen--Macaulay of dimension 1.
\end{ex}
The following is the algebraic incarnation of the fact that if $G$ is a group, then $E$ is central in $C_G(E)$. 
  \begin{prop}
    Let $R$ be a connected unstable Noetherian algebra and $(E,f) \in \bA_R$ then $\depth (T_E(R;f)) \ge \rank(E)$. 
  \end{prop}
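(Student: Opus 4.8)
The plan is to set $R' := T_E(R;f)$ and recognize $(E,g)$ --- for a canonical $g \colon R' \to H_E^*$ --- as a central object of $\bA_{R'}$, thereby reducing the statement to \Cref{cor:duflot}; the cleanest route, however, is to rerun the argument of \Cref{thm:duflot_regular} directly, since the comodule structure one needs is already available on $T_E(R;f)$ without appealing to centrality. First I would record the basic properties of $R'$: it is again a connected ($(R')^0 = \F_p$, by the component decomposition \eqref{eq:tfunctorcomponent}) Noetherian unstable algebra, and by the discussion in \Cref{sec:tfunctorcomponent} it carries a canonical $H_E^*$-comodule structure $\kappa_{R,(E,f)} \colon R' \to H_E^* \otimes R'$ whose counit composite $(\epsilon_E \otimes 1)\circ \kappa_{R,(E,f)}$ is the identity (the component version of \eqref{eq:comodidentity}).

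Next I would introduce $g := (1 \otimes \bar\epsilon)\circ \kappa_{R,(E,f)} \colon R' \to H_E^*$, where $\bar\epsilon \colon R' \to \F_p$ is the augmentation. Using \Cref{lem:commcomponent} (that $\eta_{R,(E,f)} \cong \kappa_{R,(E,f)}\circ \rho_{R,(E,f)}$) together with the factorization $f \cong (1 \otimes \bar\epsilon)\circ \eta_{R,(E,f)}$ coming from the adjunction formula \eqref{eq:adjoint}, I would check that $g \circ \rho_{R,(E,f)} = f$. Consequently the $R$-module structure on $H_E^*$ given by $f$ is the restriction along $\rho_{R,(E,f)}$ of the $R'$-module structure given by $g$; since $H_E^*$ is finitely generated over $R$ (as $(E,f)\in\bA_R$), it is a fortiori finitely generated over $R'$, so that $(E,g)\in\bA_{R'}$ and $g$ is a finite $\cK$-morphism.

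With this in place I would produce a regular sequence exactly as in \Cref{cor:duflot}: the algebra $H_E^*$ is Cohen--Macaulay of dimension $\rank(E)=n$, and as it is finitely generated over $R'$ via $g$, \Cref{lem:fgdepth} gives $\depth_{R'}(H_E^*) = \depth(H_E^*) = n$. Hence there are homogeneous $x_1,\ldots,x_n \in \frak m_{R'}$ whose images $g(x_1),\ldots,g(x_n)$ form a regular sequence in $H_E^*$. I would then copy the proof of \Cref{thm:duflot_regular} with $S = R = R'$: the comodule identity exhibits $R'$ as an $\F_p[x_1,\ldots,x_n]$-module retract of $H_E^* \otimes R'$ (via $\kappa_{R,(E,f)}$ and $\epsilon_E \otimes 1$, which are linear for the $R'$-action on $H_E^*\otimes R'$ induced by the ring map $\kappa_{R,(E,f)}$), and the filtration of $H_E^* \otimes R'$ by the submodules $H_E^* \otimes (R')^{\ge j}$ has associated graded $H_E^* \otimes (R')^j$, on which $x_i$ acts through $g(x_i)$ on the first factor. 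Since $H_E^*$ is free over $\F_p[g(x_1),\ldots,g(x_n)]$ by \Cref{lem:depth_regular}, each graded piece is free, the defining short exact sequences split, and $H_E^* \otimes R'$ --- hence its retract $R'$ --- is free over $\F_p[x_1,\ldots,x_n]$. By \Cref{lem:depth_regular} the sequence $x_1,\ldots,x_n$ is $R'$-regular, giving $\depth(R') \ge n = \rank(E)$.

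The main obstacle is the bookkeeping of module structures in the last step: one must verify that the $\F_p[x_1,\ldots,x_n]$-action on $H_E^*\otimes R'$ induced by $\kappa_{R,(E,f)}$ --- which makes both $\kappa_{R,(E,f)}$ and $\epsilon_E\otimes 1$ linear, and hence exhibits $R'$ as a retract --- has, with respect to the grading filtration coming from $R'$, associated graded equal to the first-factor action through $g$; this is precisely what makes the freeness argument run, and it is where the identity $g = (1\otimes\bar\epsilon)\circ\kappa_{R,(E,f)}$ enters. Conceptually, all of this is the assertion that $(E,g)$ is a central object of $\bA_{R'}$, the algebraic form of ``$E$ is central in $C_G(E)$,'' so an equally valid route is to prove centrality of $(E,g)$ --- reducing via the iterated-$T$-functor identity $T_E T_E \cong T_{E\oplus E}$ and \Cref{ex:central} to the statement that $\rho$ is an isomorphism on the relevant component --- and then simply invoke \Cref{cor:duflot}.
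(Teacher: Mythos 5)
Your proof is correct, but it is structured differently from the paper's. The paper constructs the very map you call $g$ — there it is $h = \rho_{H_E^*,\mathrm{id}}^{-1}\circ T_E(f)$, defined via Lannes' computation of $T_E(H_E^*)$ (\Cref{ex:central}) and only afterwards shown to equal $(1\otimes\epsilon)\circ\kappa_{R,f}$ — and then, instead of rerunning any freeness argument, verifies that $(E,h)$ is \emph{central} in $\bA_{T_E(R;f)}$ using the criterion of \cite[Proposition 3.4]{DwyerWilkerson1992cohomology} with $\kappa_{R,f}$ as the witness, so that \Cref{cor:duflot} applies as a black box. You bypass centrality altogether: you observe that the proof of \Cref{thm:duflot_regular} never uses the isomorphism $\rho$ itself, only a $\cK$-map $\Psi\colon S \to H_C^*\otimes S$ satisfying the counit identity and whose leading term $(1\otimes\bar\epsilon)\circ\Psi$ is the structure map to $H_C^*$; the component comodule map $\kappa_{R,(E,f)}$ is already such a map, with the leading-term compatibility holding by your very definition of $g$ (this compatibility is the point the paper leaves implicit inside the proof of \Cref{thm:duflot_regular}, and you are right to single it out as the crux). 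Note also that your identity $g\circ\rho_{R,(E,f)}=f$, obtained from \Cref{lem:commcomponent} and \eqref{eq:adjoint}, is exactly how the paper deduces finiteness of $h$, so the two finiteness arguments coincide. What each route buys: the paper's is shorter given its machinery and proves the stronger, conceptually meaningful statement that $(E,h)$ is central — the algebraic form of ``$E$ is central in $C_G(E)$'' — while yours is more self-contained, needing neither the Dwyer--Wilkerson criterion nor \Cref{ex:central} in its main line, at the cost of repeating the filtration argument and yielding only the depth inequality. One caveat you share with the paper rather than resolve: both arguments tacitly use that $T_E(R;f)$ is a Noetherian (connected) unstable algebra, which is what legitimizes \Cref{lem:fgdepth} (resp.\ \Cref{cor:duflot}); this follows from the finiteness of $\rho_{R,(E,f)}$, which the paper invokes but likewise does not prove.
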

\begin{proof}
  It suffices to show that there exists a map $h \colon T_E(R;f) \to H_E^*$ such that $(E,h)$ is central in $\bA_{T_E(R;f)}$. Such a map $h$ is constructed in the proof of Theorem 3.6 of \cite{DwyerMillerWilkerson1992Homotopical}. There is a commutative diagram \[
\begin{tikzcd}
T_E(R,f) \arrow{r}{T_E(f)}  & T_E(H_E^*,\text{id})  \\
R \arrow{u}{\rho_{R,f}} \arrow{r}[swap]{f} & H_E^* \arrow{u}[swap]{\rho_{H_E^*,\text{id}}}.
\end{tikzcd}
\]  
By \Cref{ex:central} $\rho_{H_E^*,\text{id}}$ is an isomorphism, and so we define $h = \rho_{H_E^*,\text{id}}^{-1}T_E(f)$. Since $f$ and $\rho_{R,f}$ are finite, it follows $h$ must be as well, and so $(E,h) \in \bA_{T_E(R,f)}$. 

To see that $(E,h)$ is central we will use the criterion given in \cite[Proposition 3.4]{DwyerWilkerson1992cohomology}, so that we must produce a morphism $T_E(R,f) \longrightarrow H_E^* \otimes T_E(R;f)$ such that the composition with the projections $T_E(R;f) \longrightarrow \F_p$ and $H_E^* \longrightarrow \F_p$ gives $h$ and the identity map of $T_E(R;f)$ respectively. We claim that $\kappa_{R,f}$ has this property. Indeed, because $\kappa_{R,f}$ makes $T_E(R;f)$ into a $H_E^*$-comodule, composition with $H_E^* \longrightarrow \F_p$ is the identity. For the other composite, consider the commutative diagram
  \[
  \begin{tikzcd}
{T_E(R,f)} \arrow[r, "{\kappa_{R,f}}"] \arrow[d, "T_E(f)"'] & {H_E^* \otimes T_E(R,f)} \arrow[r, "1 \otimes \epsilon"] \arrow[d, "1 \otimes T_E(f)"] & H_E^* \arrow[equal,d] \\
{T_E(H_E^*,\text{id})} \arrow[r, "{\kappa_{H_E^*,\text{id}}}"']      & {H_E^* \otimes T_E(H_E^*,\text{id})} \arrow[r, "1 \otimes \epsilon"']                        & H_E^*          
\end{tikzcd}
\]
By \Cref{lem:commcomponent,ex:central} we have $\kappa_{H_E^*,\text{id}} \cong \eta_{H_E^*,\text{id}}\circ\rho_{H_E^*,\text{id}}^{-1}$. Using the observation that $(1 \otimes \epsilon) \circ \eta_{H_E^*,\text{id}} \simeq \text{id}$, and commutativity we then see that
\[
\begin{split}
  (1 \otimes \epsilon) \circ \kappa_{R,f} &\cong (1 \otimes \epsilon) \circ \kappa_{H_E^*,\text{id}} \circ T_E(f) \\
  &  \cong (1 \otimes \epsilon) \circ \eta_{H_E^*,\text{id}} \circ \rho_{H_E^*,\text{id}}^{-1} \circ T_E(f) \\
  & \cong \text{id} \circ \rho_{H_E^*}^{-1} \circ T_E(f)\\
  & = h,
\end{split}
\]
as required. 
\end{proof}

	One can give a variant of \Cref{cor:duflot}, using similar assumption to those of \cite[Theorem 1.2]{DwyerWilkerson1992cohomology}. In this case, the reader should think of $R = H_G^*$ and $P = H_S^*$ for $G$ a finite group, and $S$ a Sylow $p$-subgroup.  
\begin{thm}\label{thm:duflotretract}
		Suppose there exists a map $i \colon R \to P$ of unstable algebras such that:
	\begin{enumerate}
		\item Both $R$ and $P$ are finitely generated as algebras, and the map $i$ makes $P$ into a finitely generated module over $R$.
		\item The map $i$ has an additive left inverse $P \to R$ which is a map of $R$-modules.
		\item There exists a non-trivial central object $(C,g) \in \bA_P$. 
	\end{enumerate}
	Then the following hold:
	\begin{enumerate}
	 	\item If $x_1,\ldots,x_n \in \frak m_R$ is a sequence of homogeneous elements such that the restriction of $x_1,\ldots,x_n$ form an $R$-regular sequence in $H_C^*$, then $x_1,\ldots,x_n$ is an $R$-regular sequence in $R$. 
	 	\item The depth of $R$ is greater than or equal to the rank of $C$. 
	 \end{enumerate} 
\end{thm}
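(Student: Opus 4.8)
The plan is to run the argument of \Cref{thm:duflot_regular} for the algebra $P$ itself, and then to push the resulting regular sequence down from $P$ to $R$ using the $R$-linear splitting provided by hypothesis (2). Throughout I regard $R$ and $P$ as connected Noetherian unstable algebras, and I note that $i$ is injective since it admits an additive (indeed $R$-linear) left inverse.

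First I would prove (1). Hypothesis (3) gives a central object $(C,g)\in\bA_P$, which is in particular $P$-central, so \Cref{thm:duflot_regular} applies with source and target both equal to $P$ and with $f=\mathrm{id}_P$. The elements $i(x_1),\dots,i(x_n)$ are homogeneous and lie in $\frak m_P$ (as $i$ is a degree-preserving map of connected algebras), and their images under $g$ are exactly the restrictions $g(i(x_j))$. Since the $R$-module structure on $H_C^*$ is induced by $g\circ i$, the hypothesis that the restrictions of $x_1,\dots,x_n$ form a regular sequence on $H_C^*$ is literally the statement that $i(x_1),\dots,i(x_n)$ form a $P$-regular sequence on $H_C^*$ via $g$. \Cref{thm:duflot_regular} then yields that $i(x_1),\dots,i(x_n)$ is a regular sequence in $P$.

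Next I would descend via \Cref{lem:depth_regular}. By that lemma the $P$-regularity of $i(x_1),\dots,i(x_n)$ means they are algebraically independent and $P$ is free over $B:=\F_p[i(x_1),\dots,i(x_n)]$. Because $i$ is injective, $x_1,\dots,x_n$ are algebraically independent in $R$ and $i$ restricts to an isomorphism of polynomial rings $A:=\F_p[x_1,\dots,x_n]\xrightarrow{\ \cong\ }B$; through this isomorphism $P$ becomes a free $A$-module. The left inverse of condition (2) is $R$-linear, hence $A$-linear, so $R$ is a graded $A$-module direct summand of the free $A$-module $P$, and therefore $R$ is graded projective, hence graded free, over $A$ (by \cite[Proposition 1.5.15]{BrunsHerzog1993CohenMacaulay}, exactly as in the proof of \Cref{thm:duflot_regular}). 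A second application of \Cref{lem:depth_regular} shows that $x_1,\dots,x_n$ is an $R$-regular sequence in $R$, proving (1). Part (2) then follows formally: combining the finiteness in (1) and (3), $H_C^*$ is finitely generated over $R$ via $g\circ i$, and since $H_C^*$ is Cohen--Macaulay, \Cref{lem:fgdepth} gives $\depth_R(H_C^*)=\rank(C)=:c$; choosing a length-$c$ sequence in $\frak m_R$ that is regular on $H_C^*$ and applying (1) upgrades it to an $R$-regular sequence in $R$, so $\depth(R)\ge c$.

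The one step demanding genuine care is the descent from $P$ to $R$: everything hinges on the retraction of hypothesis (2) being a map of $R$-modules, so that it restricts to an $A$-linear splitting and the summand-of-a-free-module argument produces freeness of $R$ over $A$ rather than merely over $P$. This $R$-linear retract plays the role that the $H_C^*$-comodule splitting plays in \Cref{thm:duflot_regular}, and verifying its compatibility with the polynomial subring $A$ is where the proof does its real work; the remaining steps are bookkeeping with the functoriality of $T_C(-;-)$ and standard graded commutative algebra.
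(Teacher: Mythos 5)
Your proof is correct and follows essentially the same route as the paper: apply \Cref{thm:duflot_regular} to $P$ with its central object $(C,g)$, use the $R$-linear retraction to exhibit $R$ as a direct summand of $P$ over the polynomial subring $\F_p[x_1,\ldots,x_n]$, conclude freeness (projective implies free in the graded connected setting), and deduce regularity via \Cref{lem:depth_regular}, with part (2) following as in \Cref{cor:duflot}. The paper's own proof is just a terser version of exactly this argument, so there is nothing to add.
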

\begin{proof}
	Observe that $R$ is a direct summand of $P$ as an $R$-module, and hence if $P$ is a free module over $\F_p[x_1,\ldots,x_n]$, then so is $R$, and hence the sequence $x_1,\ldots,x_n$ is regular in $R$. The result then easily follows from \Cref{thm:duflot_regular,cor:duflot}. 
\end{proof}
\begin{ex}
  We recall from \Cref{rem:spaces} that for a space $X$ and a map $f \colon BE \to X$ there is a natural map $T_E(H^*(X);f) \to H^*(\Map(BE,X)_f$, which under some mild conditions on $X$ is an isomorphism. Say that $X$ is a Lannes' space if this is the case. We can define a category $\bA_X$ whose objects are pairs $(E,f)$ where $E$ is an elementary abelian $p$-group, and $f \colon BE \to X$ makes $H_E^*$ into a finitely generated $H^*(X)$-module, and a morphism $(E,f) \to (E',f')$ is a monomorphism $\alpha \colon E \to E'$ such that $f' \circ B\alpha \simeq f$. There is an obvious functor $\bA_X \to \bA_{H^*(X)}$. If $H^*(X)$ is of finite type, and $X$ is $p$-complete, then this is an equivalence by \cite[Theorem 3.1.1]{lannes_ihes}. Now suppose $X$ is additionally a Lannes' space. Then $(E,f^*) \in \bA_{H^*(X)}$ is central if and only if $H^*(X) \to H^*(\Map(BE,X)_f)$ is an equivalence. In this case, the mapping space $\Map(BE,X)_f$ plays the role of the centralizer of $f \colon BE \to X$ inside of $X$.
\end{ex}
\begin{rem}
	Suppose the conditions of \Cref{thm:duflotretract} are satisfied, and that moreover the map $i \colon S \to R$ is also a map of unstable modules over the Steenrod algebra, then Notbohm \cite{notbohm_depth} has shown that the depth of $R$ satisfies
	\[
\depth(R) = \min\{ \depth T_E(R;f) \mid (E,f) \in \bA^*_{R}\}.
	\]
Here $\bA^*_R$ is the full subcategory of $\bA_R$ consisting of $(E,f)$ where $E$ is a \emph{non-trivial} elementary abelian $p$-group. This result is not explicitly stated in \cite{notbohm_depth}, so we briefly explain how to deduce it. Define a functor $\alpha_R \colon \bA^*_R \to \cal{K}$ which sends $(E,f) \in \bA_R^*$ to $T_E(R;f)$. Then, by \cite[Theorem 1.2]{DwyerWilkerson1992cohomology} the assumptions imply that
		\[
\varprojlim{}^i {\alpha_R} \cong \begin{cases}
	R & i = 0 \\
	0 & i > 0. 
\end{cases}
		\]
		In particular, by \cite[Corollary 2.3]{notbohm_depth} we have $\depth R \ge \min\{\depth T_E(R;f) \mid (E,f) \in \bA^*_R \}$. On the other hand, by  \cite[Theorem 3.1]{notbohm_depth} we have $\depth R \le \depth T_E(R;f)$, so the previous inequality is actually an equality. 
\end{rem}
\subsection{Carlson's detection theorem}
In \cite{Carlson1995Depth} Carlson showed that if $G$ is a finite group, and $H_G^*$ has depth $s$, then the collection $\cal{H}_S= \{ C_G(E) \mid \rank(E) = s \}$ detects $H_G^*$, in the sense that the product of the restriction maps $H_G^* \to  \prod_{H \in \cal{H}_S} H_{H}^*$ is injective. In this section, we generalize this to the case of a connected Noetherian algebra $R$, and more generally for $M \in R_{fg}-\cU$. We will need the notion of the $T$-support of a module $M$ in $R-\cU$ and the $R-\cU$ transcendence degree of $M$, due to Henn \cite{Henn1996Commutative} and Powell \cite{powell}, respectively. 

\begin{defn}
  Let $R$ be a Noetherian unstable algebra, and $M \in R-\cal{U}$, then the $T$-support of $M$ is 
  \[
T-\supp(M) = \{ (E,f) \in \bA_R \mid T_E(M;f) \ne 0 \}. 
 \]
 The $R-\cal{U}$ transcendence degree of $M$ is 
 \[
\TrD_{R-\cU}(M) = \sup\{\rank( E) \mid (E,f) \in T-\supp(M)\}. 
 \]
\end{defn}
We have the following simple lemma, which follows from exactness of the $T$-functor.
\begin{lem}\label{lem:trdinjective}
	If $M \to N$ is a monomorphism in $R-\cU$, then $\TrD_{R-\cU}(M) \le \TrD_{R-\cU}(N)$. 
\end{lem}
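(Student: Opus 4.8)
The plan is to prove the stronger statement that the supports are nested, $T-\supp(M) \subseteq T-\supp(N)$; since the transcendence degree is defined as a supremum of $\rank(E)$ over the support, passing to a subset can only decrease the supremum, so the containment immediately yields $\TrD_{R-\cU}(M) \le \TrD_{R-\cU}(N)$. Concretely, it suffices to show that for every $(E,f) \in \bA_R$ one has the implication $T_E(M;f) \ne 0 \implies T_E(N;f) \ne 0$.

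First I would unwind the hypothesis: a monomorphism $M \to N$ in $R-\cU$ is in particular an injection $M \hookrightarrow N$ of the underlying unstable $\cA$-modules. Since Lannes' functor $T_E$ is exact, applying it yields an injection $T_E M \to T_E N$. Next I would descend to the $f$-components. Recall $T_E(M;f) = T_E M \otimes_{T_E^0 R} \F_p(f)$, where $\F_p(f)$ is flat over the $p$-Boolean algebra $T_E^0 R$. Applying the exact functor $- \otimes_{T_E^0 R} \F_p(f)$ to the injection $T_E M \to T_E N$ produces an injection $T_E(M;f) \to T_E(N;f)$. (Alternatively one may invoke the splitting \eqref{eq:tfunctorcomponent}: the map $T_E M \to T_E N$ is the direct sum of its component maps, and a direct sum of maps is injective precisely when each summand is, so each $T_E(M;f) \to T_E(N;f)$ is injective.) A nonzero module cannot inject into the zero module, so $T_E(M;f) \ne 0$ forces $T_E(N;f) \ne 0$, which establishes the support containment and hence the lemma.

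There is essentially no serious obstacle here; the content is entirely formal, resting on the exactness of $T_E$ together with the flatness of $\F_p(f)$. The only point requiring a moment of care is the compatibility of the induced map with the decomposition into $f$-components: this holds because $T_E M \to T_E N$ is a map of $T_E R$-modules, in particular of $T_E^0 R$-modules, and therefore respects the idempotent splitting of $T_E^0 R \cong \F_p^{\Hom_{\cK}(R,H_E^*)}$ indexed by the morphisms $f \colon R \to H_E^*$.
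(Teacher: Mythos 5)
Your proof is correct and is exactly the argument the paper intends: the paper offers no more than the remark that the lemma ``follows from exactness of the $T$-functor,'' and your write-up fleshes out precisely that route, with the descent to components handled by the flatness of $\F_p(f)$ over the $p$-Boolean algebra $T_E^0R$ (a fact the paper itself records). The observation that the induced map respects the idempotent splitting because it is $T_ER$-linear is the right point of care, and nothing is missing.
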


The following is the key computation for Carlson's theorem. Here we let $\bA_R^t \subset \bA_R$ denote the full subcategory consisting of those $(E,f)$ which $\rank(E)  =t $. 

\begin{prop}\label{prop:supportcalc}
Let $R$ be a Noetherian unstable algebra, $M \in R_{fg}-\cU$, $n$ a natural number, and 
\[
N = \prod_{\stackrel{(E,f) \in \bA^t_R}{t < s}} H_E^* \otimes T_E(M;f)^{<n}.
\]
Then, $\TrD_{R-\cal{U}}(N) < s.$
 \end{prop}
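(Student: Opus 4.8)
The plan is to bound the transcendence degree of each factor separately. By \Cref{prop:rectorprops}(1) the category $\bA_R$ has a finite skeleton, so after choosing skeletal representatives the product defining $N$ is a \emph{finite} product. Since $T_V$ is exact and additive it commutes with finite products, and hence so does each component functor $T_V(-;\chi) = T_V(-)\otimes_{T_V^0R}\F_p(\chi)$; consequently $T-\supp(N)$ is the union of the $T-\supp(H_E^*\otimes T_E(M;f)^{<n})$ over the factors, and $\TrD_{R-\cU}(N)$ is the maximum of the $\TrD_{R-\cU}(H_E^*\otimes T_E(M;f)^{<n})$. It therefore suffices to show that every factor with $\rank(E)=t<s$ satisfies $\TrD_{R-\cU}(H_E^*\otimes T_E(M;f)^{<n})\le t$.

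\textbf{Computing $T_V$ of a factor.} First I would record that $W:=T_E(M;f)^{<n}$ is finite dimensional over $\F_p$: as $M\in R_{fg}-\cU$ and $R$ is Noetherian, $T_E(M;f)$ is finitely generated over the connected algebra $T_E(R;f)$, hence of finite type, so truncating below degree $n$ leaves only finitely many degrees, each finite dimensional. Being finite dimensional, $W$ is $T$-trivial, i.e.\ $T_VW\cong W$ for every $V$ (a finite module is a finite iterated extension of suspensions of $\F_p$, on each of which $T_V$ is the identity, and $T_V$ is exact). Combined with the fact that $T_V$ commutes with tensor products and Lannes' computation $T_V(H_E^*)\cong\bigoplus_{\phi\in\Hom(V,E)}H_E^*$ \cite{lannes_ihes}, indexed by group homomorphisms $\phi\colon V\to E$ with component map $\phi^*\colon H_E^*\to H_V^*$, this gives
\[
T_V(H_E^*\otimes W)\cong T_V(H_E^*)\otimes W\cong\bigoplus_{\phi\in\Hom(V,E)}H_E^*\otimes W.
\]

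\textbf{Identifying the non-zero components (the crux).} The key step, and what I expect to be the main obstacle, is to determine which $R$-module components of this sum are non-zero. The $R$-module structure on $H_E^*\otimes W$ is induced by $\eta_{R,(E,f)}\colon R\to H_E^*\otimes T_E(R;f)$, and a priori the $T_E(R;f)$-direction could enlarge the $T$-support. The point is that the finiteness of $W$ collapses this contribution: since $T_VW\cong W$ with the structure map acting through the augmentation, the splitting of $T_V(H_E^*\otimes W)$ into $R$-components is detected entirely on the $H_E^*$-tensor factor. Concretely, using \Cref{lem:commcomponent} to write $\eta_{R,(E,f)}=\kappa_{R,(E,f)}\circ\rho_{R,(E,f)}$, the counit identity \eqref{eq:comodidentity}, and the relation $f=h\circ\rho_{R,(E,f)}$ for the central map $h\colon T_E(R;f)\to H_E^*$ constructed in the proof that $\depth(T_E(R;f))\ge\rank(E)$, I would show that the summand indexed by $\phi$ lies in the $R$-component labelled by the composite $R\xrightarrow{f}H_E^*\xrightarrow{\phi^*}H_V^*$. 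Thus $T_V(H_E^*\otimes W;\chi)\ne 0$ forces $\chi=\phi^*\circ f$ for some $\phi\colon V\to E$.

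\textbf{Conclusion.} Granting this, the result is immediate. The image of $\chi=\phi^*f$ lies in $\im(\phi^*)\cong H_{\im\phi}^*$, a subalgebra of $H_V^*$ of Krull dimension $\rank(\im\phi)\le\rank(E)=t$. If moreover $(V,\chi)\in\bA_R$, then $H_V^*$ is finite over $R$ via $\chi$, so $\rank(V)=\dim(H_V^*)\le\dim(\im\chi)\le t$. Hence every $(V,\chi)\in T-\supp(H_E^*\otimes W)$ has $\rank(V)\le t$, giving $\TrD_{R-\cU}(H_E^*\otimes W)\le t<s$, and by the first paragraph $\TrD_{R-\cU}(N)<s$. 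I want to stress that the truncation $(-)^{<n}$ is genuinely essential here: for the untruncated module $H_E^*\otimes T_E(M;f)$ the $T_E(R;f)$-direction does contribute, and one finds $\TrD_{R-\cU}(H_E^*\otimes T_E(M;f))$ as large as $\dim(R)$ in general; the finiteness of $W$ is exactly what removes this contribution, which is why the component identification in the previous paragraph is the delicate point.
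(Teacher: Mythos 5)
Your proof is correct, but it takes a genuinely more self-contained route than the paper. The paper's proof is two lines: reduce to a single factor by exactness of the $T$-functor, then quote Henn's computation \cite[Lemma 3.6]{Henn1996Commutative}, which gives
\[
T_W(H_E^* \otimes T_E(M;f)^{<n};j) \;\cong\; \prod_{\Hom_{\bA_R}((W,j),(E,f))} H^*_E \otimes T_E(M;f)^{<n},
\]
and conclude because morphisms in $\bA_R$ are monomorphisms of elementary abelian groups (\Cref{prop:rectorprops}), so the indexing set is empty once $\rank(W) \ge s > t$. What you do instead is essentially reprove Henn's lemma: finiteness of the truncation, $T_V$-triviality of finite modules, monoidality of $T_V$, Lannes' computation of $T_VH_E^*$, and the identification of the $R$-component of the $\phi$-summand as $\phi^*\circ f$ are exactly the ingredients of that result. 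Your concluding step also differs: where the paper uses ``morphisms in $\bA_R$ are mono,'' you argue that $\chi=\phi^*\circ f$ factors through $H^*_{\im\phi}$, so finiteness of $H_V^*$ over $R$ forces $\rank(V)\le\rank(\im\phi)\le t$ --- both arguments ultimately rest on Quillen's finiteness theorem \cite[Corollary 2.4]{Quillen1971spectrum}. Your route buys independence from Henn's lemma and makes visible exactly where the truncation $(-)^{<n}$ enters (your closing remark about the untruncated module is correct and is the right moral); the paper's route buys brevity. Your attention to the finite skeleton in the reduction step is also warranted --- $T_V$ is a left adjoint and has no general reason to commute with infinite products --- and is slightly more careful than the paper's phrasing.

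One caveat: your crux step is only sketched (``I would show\dots''), and it is precisely the content of the cited lemma, so as written your argument is not yet a complete substitute for it. The sketch does go through, and with the tools you name: the identity you need is $(1\otimes \mathrm{aug}_{T_E(R;f)})\circ\eta_{R,(E,f)} = f$, which follows by combining $\eta_{R,(E,f)}=\kappa_{R,(E,f)}\circ\rho_{R,(E,f)}$ (\Cref{lem:commcomponent}) with the identity $(1\otimes \mathrm{aug})\circ\kappa_{R,(E,f)}=h$ established in the paper's proof that $\depth T_E(R;f)\ge\rank(E)$, and with $h\circ\rho_{R,(E,f)}=f$. You also need to justify that the finite module $W$ is concentrated at the trivial component of $T_V(T_E(R;f))$: filter $W$ by powers of the augmentation ideal of the connected algebra $T_E(R;f)$ (this filtration is finite by degree reasons) and use exactness of $T_V$ on the subquotients, which are trivial modules. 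With those two points written out your proof is complete; otherwise the honest move is simply to cite \cite[Lemma 3.6]{Henn1996Commutative}, as the paper does.
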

 \begin{proof}
 Using exactness of the $T$-functor, it suffices to show that each component of the product has transcendence degree less than $s$. To see this, fix some $(W,j) \in \bA_R$, then by \cite[Lemma 3.6]{Henn1996Commutative} we have
    \begin{equation}\label{eq:henntfunctor}
T_W(H_E^* \otimes T_E(R;f)^{<n};j) \cong \prod_{\Hom_{\bA_R}((W,j),(E,f))} H^*_E \otimes T_E(R;f)^{<n}
    \end{equation}
    Recall that a morphism $(W,j) \to (E,f)$ in $\bA_R$ is in particular a monomorphism $W \to E$ of elementary abelian $p$-groups; in particular, if the rank of $W$ is greater than or equal to $s$, there are no such morphisms. We deduce that $\TrD_{R-\cU}(H_E^* \otimes T_E(R;f)^{<n}) < s$, as required. 
 \end{proof}
The proof of Carlson's theorem will use two results from the theory of unstable algebras over the Steenrod algebra. The first is a deep result of Henn, Lannes, and Schwartz \cite[Theorem 4.9]{HennLannesSchwartz1995Localizations} or \cite[Theorem 1.10]{Henn1996Commutative} for the exact form we require.
\begin{thm}[Henn--Lannes--Schwartz]\label{thm:hls}
	Let $R$ be a Noetherian unstable algebra, and $M \in R_{fg}-\cU$. Then there exists a natural number $n$ such that the maps $\eta_{M,(E,f)}$ induce a monomorphism
	\[
\xymatrix{M \ar[r]& \displaystyle \prod_{(E,f) \in \bA_R}H_E^* \otimes T_E(M;f)^{<n}}
	\]
	in the category $R_{fg}-\cU$. 
\end{thm}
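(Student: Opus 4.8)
The plan is to deduce this from the localization theory of $\cU$ modulo nilpotent unstable modules, since the content of the statement is exactly that a finitely generated module is \emph{detected on elementary abelian subgroups up to a bounded amount of nilpotence}. Recall the nilpotent filtration $\cU \supseteq \Nil_1 \supseteq \Nil_2 \supseteq \cdots$, where $\Nil_s$ is the localizing subcategory generated by the $s$-fold suspensions of reduced modules. The first step is to show, using the exactness and monoidality of $T_E$ together with the splitting of the unit $T_E M \cong M \oplus \bar{T}_E M$, that the combined unit map
\[
\eta_M \colon M \longrightarrow \prod_{(E,f) \in \bA_R} H_E^* \otimes T_E(M;f)
\]
becomes an isomorphism in the quotient category $\cU/\Nil_1$; equivalently, that its kernel lies in $\Nil_1$. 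This is precisely the reconstruction half of the Henn--Lannes--Schwartz description of $\cU/\Nil_1$ via the family of $T$-functors, and I would import it as a black box rather than reprove it.

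The second step is to upgrade ``kernel in $\Nil_1$'' to a genuine monomorphism after truncating each factor to degrees $<n$. Here finiteness enters twice. First, by \Cref{prop:rectorprops} the category $\bA_R$ has a finite skeleton, so it suffices to treat finitely many pairs $(E,f)$ and the product is effectively finite. Second, since $M$ is finitely generated over the Noetherian algebra $R$ and $T_E$ preserves finite generation, each $T_E(M;f)$ is finitely generated over $T_E(R;f)$; tensoring with $H_E^*$ means that the information carried by $H_E^* \otimes T_E(M;f)$ in large degrees is a free extension of what already appears in low degrees, so truncating the $T$-functor factor at a sufficiently large degree $n$ loses nothing needed for injectivity. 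One then iterates down the filtration $\Nil_1 \supseteq \Nil_2 \supseteq \cdots$, using that $R$ has finite Krull dimension (again \Cref{prop:rectorprops}) to bound the number of nilpotent layers that must be peeled off, and chooses a single $n$ large enough to work uniformly across all $(E,f)$ and to kill the residual nilpotent kernel.

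The main obstacle is exactly this second step: proving that finitely many degrees suffice, \emph{uniformly} in $(E,f)$, to make the map an honest monomorphism rather than merely injective modulo $\Nil_1$. Controlling the residual nilpotent kernel requires the full force of the HLS analysis of how $T_E$ interacts with each stage of the nilpotent filtration, while extracting the uniform truncation bound $n$ rests on the Noetherian finite generation of $M$ and the finite skeleton of $\bA_R$. I would therefore concentrate the argument on combining the nilpotence-detection input with the finite-generation and finite-dimensionality hypotheses to produce the single bound $n$, and otherwise defer to the cited results of Henn--Lannes--Schwartz and Henn for the deep categorical input.
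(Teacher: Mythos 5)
The first thing to note is that the paper does not prove this statement at all: it is imported as an external theorem of Henn--Lannes--Schwartz, the ``proof'' being the citations \cite[Theorem 4.9]{HennLannesSchwartz1995Localizations} and \cite[Theorem 1.10]{Henn1996Commutative}. The honest core of your proposal --- import the detection-modulo-nilpotents statement as a black box, and defer the interaction of $T_E$ with the nilpotent filtration to HLS and Henn --- is therefore, in substance, exactly that citation. The difficulty is that the scaffolding you add around the citation, at precisely the step you yourself flag as ``the main obstacle,'' contains claims that are wrong or vacuous, so the proposal neither matches the paper (a clean citation) nor constitutes an independent proof.

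Two concrete gaps. First, the claim that the finite Krull dimension of $R$ (via \Cref{prop:rectorprops}) ``bounds the number of nilpotent layers that must be peeled off'' is false: Krull dimension controls the ranks of the elementary abelian groups appearing in $\bA_R$ (Quillen/Rector), not the length of the nilpotent filtration. For instance $H^*(BQ_8;\F_2)$ has Krull dimension $1$, yet it has nonzero nilpotent classes in degree $3$ (e.g.\ $x^2y$), so its nilpotent filtration has nontrivial layers in suspension degrees $0,1,\dots,3$; no bound of the form ``Krull dimension $+$ constant'' exists. The finiteness of the nilpotent filtration of a finitely generated module over a Noetherian unstable algebra is itself one of the deep HLS theorems --- it is an input you must cite, not a consequence of \Cref{prop:rectorprops}. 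Second, the justification for the uniform truncation bound $n$ --- that in large degrees $H_E^*\otimes T_E(M;f)$ is ``a free extension of what already appears in low degrees'' --- is not an argument. The mechanism that actually produces $n$ is the local Noetherianity of $R-\cU$ together with the Lannes--Schwartz classification of its injectives: a finitely generated object embeds into a \emph{finite} direct sum of indecomposable injectives $I_{(E,f)}(k)=H_E^*\otimes J_{T_E(R;f)}(k)$ (the objects appearing around \eqref{eq:tfunctorinjec} in the paper), and one takes $n$ larger than the finitely many Brown--Gitler weights $k$ that occur; injectivity of the truncated map then follows from the universal property of $\eta_{M,(E,f)}$. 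Since both of these points are exactly the content you propose to outsource, the defensible version of your proposal collapses to the citation the paper already gives, and the added material as written would not survive scrutiny.
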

The second result we require is due to Powell \cite[Proposition 7.3.1]{powell}. 
 \begin{prop}[Powell]\label{prop:powell_mono}
   Let $1 \ne M \hookrightarrow N$ be a monomorphism in $R_{fg}-\cU$, then 
   \[
\TrD_{R-\cU}(M) \ge \depth_R(N). 
   \]
 \end{prop}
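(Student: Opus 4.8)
The plan is to convert both quantities into ordinary commutative algebra. The essential bridge, which I also expect to be the main obstacle, is the identity
\[
\TrD_{R-\cU}(M) = \dim_R(M),
\]
where $\dim_R(M) = \dim(R/\Ann_R M)$ is the Krull dimension of $M$ as an $R$-module. This says that the $T$-support of $M$ records exactly the ordinary support $\supp_R(M)$, measured by rank. I would deduce it from Quillen--Rector stratification: the $\cF$-isomorphism $R \to \varprojlim_{\bA_R} H_E^*$ of \Cref{prop:rectorprops} exhibits $\Spec R$ as a union of strata, one for each isomorphism class of $(E,f)$, with the stratum of $(E,f)$ having dimension $\rank(E)$, and Henn's analysis of the $T$-functor shows that $T_E(M;f) \neq 0$ precisely when $\supp_R(M)$ meets the corresponding stratum. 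The inequality we actually need, $\TrD_{R-\cU}(M) \ge \dim_R(M)$, is then the statement that a top-dimensional prime of $\supp_R(M)$ is realized by some $(E,f)$ of rank $\dim_R(M)$ with $T_E(M;f) \neq 0$. Rather than reprove this, I would cite \cite{Henn1996Commutative}.

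Granting the bridge, the argument is immediate, and this is where the hypotheses on $N$ enter. I would invoke the standard fact that for a finitely generated module over a Noetherian ring, every associated prime $\frak p \in \operatorname{Ass}_R(N)$ satisfies $\depth_R(N) \le \dim(R/\frak p)$ (see, e.g., \cite{BrunsHerzog1993CohenMacaulay}). Since $M \neq 0$, the set $\operatorname{Ass}_R(M)$ is non-empty, and because $M \hookrightarrow N$ we have $\operatorname{Ass}_R(M) \subseteq \operatorname{Ass}_R(N)$. Choosing any $\frak p \in \operatorname{Ass}_R(M)$ and using $\frak p \in \supp_R(M)$ gives
\[
\TrD_{R-\cU}(M) = \dim_R(M) \ge \dim(R/\frak p) \ge \depth_R(N),
\]
as claimed.

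If one prefers to avoid the global identity $\TrD_{R-\cU} = \dim_R$, the same conclusion can be reached by induction on $s = \depth_R(N)$ using a regular element, and I expect this route to reprove that identity in disguise. For $s = 0$ the claim is trivial, since $M \neq 0$ gives $T_{\{e\}}(M) = M \neq 0$ and hence $\TrD_{R-\cU}(M) \ge 0$. For $s \ge 1$, pick an $N$-regular element $y \in \frak m$, so that $\depth_R(N/yN) = s-1$ and $y$ is automatically $M$-regular. Replacing $M$ by its $y$-saturation $\{\, n \in N : y^k n \in M \text{ for some } k \,\}$ enlarges $M$ inside $N$, terminates by Noetherianity, and leaves $\TrD_{R-\cU}$ unchanged (each enlargement is, up to internal degree shift, isomorphic to a submodule of the previous one, and $\TrD_{R-\cU}$ is shift-invariant and monotone under monomorphisms by \Cref{lem:trdinjective}); I may therefore assume $M \not\subseteq yN$, so that $\bar M = (M + yN)/yN$ is a non-zero submodule of $N/yN$ and $\TrD_{R-\cU}(\bar M) \ge s-1$ by induction. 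The one remaining ingredient is the sub-lemma that a regular element lowers transcendence degree by exactly one, $\TrD_{R-\cU}(M) \ge \TrD_{R-\cU}(\bar M) + 1$, which follows by applying the exact functor $T_E$ to $0 \to \Sigma^{|y|} M \xrightarrow{\,y\,} M \to M/yM \to 0$ and tracking the action of $y$ on the components $T_E(M;f)$.
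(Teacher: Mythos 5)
Your primary route is genuinely different from the paper's, and its commutative-algebra half is sound. The paper also starts from \Cref{thm:hls}, but then stays entirely inside $R_{fg}-\cU$: it takes a \emph{reduced} injective embedding of $N$, quotes a result of Henn (from the variant proof of the Landweber--Stong conjecture) that every elementary abelian group occurring in such an embedding has rank at least $\depth_R(N)$, and then funnels $M$ through the injectives $I_{(E,f)}(k)$, using $\Hom_{R-\cU}(M,I_{(E,f)}(n))\cong (T_E(M;f)^n)^*$ to produce a nonzero element of the $T$-support of $M$ of large rank. You replace the input ``depth controls reduced injective embeddings'' by the classical facts $\operatorname{Ass}_R(M)\subseteq\operatorname{Ass}_R(N)$ and $\depth_R(N)\le\dim(R/\fp)$ for $\fp\in\operatorname{Ass}_R(N)$; that part is fine in the graded setting (cf.\ the paper's remark on reduction to $R^{\mathrm{ev}}$). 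The gap is the bridge $\TrD_{R-\cU}(M)\ge\dim_R(M)$, which carries the entire unstable-module content of the proposition. It is true, but it is a module form of Quillen stratification: whether it appears in citable form in \cite{Henn1996Commutative} would need to be checked (it is not among the statements the present paper extracts from that reference), and your justification --- ``Henn's analysis shows $T_E(M;f)\ne 0$ precisely when $\supp_R(M)$ meets the stratum'' --- restates the claim rather than proving it. The inequality you need can in fact be derived from \Cref{thm:hls}: in the embedding $M\hookrightarrow\prod H_E^*\otimes T_E(M;f)^{<n}$ only pairs in the $T$-support contribute, and each factor has Krull dimension at most $\rank(E)$ as an $R$-module, since it is finitely generated over $H_E^*$ compatibly with the $R$-action; but some such argument must be supplied, since it is the crux of the proof.

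The fallback induction you offer in order to avoid the bridge is unsound, for a structural reason: multiplication by a homogeneous element $y\in R$ is not $\cA$-linear (Cartan formula), hence is not a morphism in $\cU$. Consequently $yN$ and $yM$ need not be $\cA$-submodules, so $N/yN$ and $M/yM$ need not be objects of $R-\cU$ at all, and $\TrD_{R-\cU}$ of them is not even defined; the sequence $0\to\Sigma^{|y|}M\xrightarrow{\,y\,}M\to M/yM\to 0$ is not a sequence in $\cU$, so exactness of $T_E$ cannot be applied to it; and the saturation step needs $M'\cong y^kM'$ ``up to internal degree shift'' as unstable modules, which fails for the same reason. (This is exactly why the paper's Duflot argument in \Cref{thm:duflot_regular} is phrased in terms of freeness over $\F_p[x_1,\ldots,x_n]$ and never forms quotients by $(y)$ inside $\cU$.) So, as written, the only complete path through your proposal runs through an unproved --- though correct --- stratification statement.
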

Since Powell's work in not published, we give a proof of this. Our proof is slightly different from that given by Powell, but very similar in spirit. We recall the existence of Brown--Gitler modules $J_R(n)$ in the category $R-\cU$ (see \cite[Section 1.5]{Henn1996Commutative}), which represent the functor $M \mapsto (M^n)^*$, where $()^*$ denotes the dual. Given $(E,f) \in \bA_R$, we can then define an injective object $I_{(E,f)}(n)$ as $H_E^* \otimes J_{T_E(R;f)}(n)$ in $R-\cU$ \cite[Proposition 1.6]{Henn1996Commutative}. In fact,if $R$ is a Noetherian unstable algebra, then $I_{(E,f)}(n)$ is injective in $R_{fg}-\cU$. It is not hard to verify (for example, \cite[Lemma 6.1.7]{powell}) that we have
\begin{equation}\label{eq:tfunctorinjec}
\Hom_{R-\cU}(M,I_{(E,f)}(n)) \cong (T_E(M;f)^n)^*. 
\end{equation}
With this in mind, we give a proof of \Cref{prop:powell_mono}. 
 \begin{proof}
   By \Cref{thm:hls} we can find an embedding of $N$ (and hence of $M$) into a product of modules of the form $H_E^* \otimes T_E(M;f)^{<n}$. We assume that the embedding of $N$ is reduced, in the sense that no summand can be removed while still remaining injective. In this case, by the proof of Proposition 2 of \cite{Henn1998variant} we can assume that in the product each elementary abelian $p$-group $E$ has rank at least $\depth_R(N)$. Each of the modules $T_E(M;f)^{<n}$ can be embedded into a finite direct product of modules of form $J_{T_E(R;f)}(k)$ (for $k<n$, see the second proof of Theorem 1.9 of \cite{Henn1996Commutative}), and hence $M$ can be embedded into a product of injective modules of the form $I_{(E,f)}(k)$ with $\rank(E) \ge \depth_R(N)$. In particular, by \eqref{eq:tfunctorinjec}, we have $(E,f) \in T-\supp(N)$, and hence $\TrD_{R-\cU}(N) \ge \rank(E) \ge \depth_R(N)$. 
 \end{proof}

 We now prove our version of Carlson's theorem. 
 \begin{thm}\label{thm:carlson}
   Let $R$ be a Noetherian unstable algebra, and $M \in R_{fg}-\cU$. Suppose that $\depth_R(M) \ge s$, then the map
   \[
\xymatrix{
  \phi_s \colon M \ar[r] & \displaystyle \prod_{(E,f) \in \bA_R^s} T_E(M;f)
}
   \]
   given by the product of the maps $\rho_{M,(E,f)}$ is injective. 
 \end{thm}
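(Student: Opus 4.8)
The plan is to argue by contradiction: I set $K \coloneqq \ker(\phi_s)$ and show that $K \neq 0$ forces $K$ to have transcendence degree both $\ge s$ and $< s$. Since $R$ is Noetherian and $M \in R_{fg}-\cU$, the submodule $K$ again lies in $R_{fg}-\cU$ and the inclusion $K \hookrightarrow M$ is a monomorphism there. If $K \neq 0$, then Powell's bound (\Cref{prop:powell_mono}), applied to $K \hookrightarrow M$, yields
\[
\TrD_{R-\cU}(K) \ge \depth_R(M) \ge s .
\]
The whole point is then to contradict this by embedding $K$ into a module of transcendence degree strictly less than $s$, built from the Henn--Lannes--Schwartz embedding of \Cref{thm:hls} together with the support computation of \Cref{prop:supportcalc}.

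The key reduction, and the step I expect to be the main obstacle, is that $\phi_s$ only records the maps $\rho_{M,(E,f)}$ with $\rank(E) = s$, whereas the embedding of \Cref{thm:hls} ranges over all of $\bA_R$; I must see that lying in $K$ kills the components of every rank $\ge s$, not merely those of rank exactly $s$. First I would note that for $(E',f') \in \bA_R$ with $\rank(E') > s$ one may pick a rank-$s$ subgroup $E \le E'$ and set $f = H^*(E \hookrightarrow E') \circ f'$; because restriction $H_{E'}^* \to H_E^*$ is finite and $(E',f') \in \bA_R$, the composite $R \to H_E^*$ still makes $H_E^*$ a finitely generated $R$-module, so $(E,f) \in \bA_R^s$ and $E \hookrightarrow E'$ is a morphism $(E,f) \to (E',f')$ in $\bA_R$. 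By naturality of the maps $\rho_{M,(-)}$ along this morphism, $\rho_{M,(E',f')}$ factors through $\rho_{M,(E,f)}$, whence $\ker \rho_{M,(E,f)} \subseteq \ker \rho_{M,(E',f')}$. Since $\rho_{M,(E,f)}$ already vanishes on $K$ for $\rank(E) = s$, I conclude that $\rho_{M,(E,f)}|_K = 0$ for \emph{every} $(E,f) \in \bA_R$ with $\rank(E) \ge s$.

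Now I would feed this into the embedding $M \hookrightarrow \prod_{(E,f) \in \bA_R} H_E^* \otimes T_E(M;f)^{<n}$ of \Cref{thm:hls}, whose $(E,f)$-component is the (truncated) map $\eta_{M,(E,f)}$. By \Cref{lem:commcomponent}, $\eta_{M,(E,f)} \cong \kappa_{M,(E,f)} \circ \rho_{M,(E,f)}$, so for every $(E,f)$ with $\rank(E) \ge s$ the component $\eta_{M,(E,f)}$ vanishes on $K$. Restricting the (still injective) embedding to $K$ therefore factors through the rank-$<s$ part, giving a monomorphism
\[
K \hookrightarrow \prod_{\substack{(E,f) \in \bA_R^t \\ t < s}} H_E^* \otimes T_E(M;f)^{<n} = N .
\]
By \Cref{prop:supportcalc} we have $\TrD_{R-\cU}(N) < s$, and by \Cref{lem:trdinjective} monotonicity gives $\TrD_{R-\cU}(K) \le \TrD_{R-\cU}(N) < s$. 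This contradicts the lower bound $\TrD_{R-\cU}(K) \ge s$ from the first paragraph, so $K = 0$ and $\phi_s$ is injective. The only genuinely delicate points are verifying that the restricted pair $(E,f)$ really lands in $\bA_R$ (finiteness of the restriction map) and that $\rho_{M,(-)}$ is natural enough for the factorization; everything else is formal manipulation of the cited results.
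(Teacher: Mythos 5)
Your proposal is correct and follows essentially the same route as the paper's own proof: identify the kernel of $\phi_s$, show via the factorization through rank-$s$ subgroups that all rank-$\ge s$ components of the Henn--Lannes--Schwartz embedding vanish on it, embed the kernel into the rank-$<s$ part to get $\TrD_{R-\cU} < s$ by \Cref{prop:supportcalc}, and contradict Powell's bound \Cref{prop:powell_mono}. The only differences are cosmetic (contradiction versus contrapositive, and your slightly more explicit verification that the restricted pair $(E,f)$ lies in $\bA_R$).
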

 \begin{proof}
For any $t \ge 0$, we let $I_t$ denote the kernel of the map 
\[
\xymatrix{
  \phi_t \colon M \ar[r] & \displaystyle \prod_{(E,f) \in \bA_R^t} T_E(M;f),
}
\] so that our claim is that if $\depth_R(M) \ge s$, then $I_s$ is trivial. We will prove the contrapositive, namely that if $I_s$ is non-trivial, then $\depth_R(M) < s$. We first claim that if $I_s$ is non-trivial, then so is $I_t$ for $t\ge s$, and in particular that $I_s \subseteq I_t$ for each $t \ge s$. Indeed, let $(E',f') \in \bA_R^t$, then for $E < E'$ of rank $s$, the pair $(E,f)$ where $f=\res_{E',E}^*f'$ is in $\bA_R^{s}$. Moreover, there is a morphism $(E,f) \to (E,f')$ in $\bA_R$, which induces a commutative diagram
\[
\begin{tikzcd}
M \arrow[d, "\rho_{M,(E,f)}"'] \arrow[dr, "\rho_{M,(E',f')}"] & \\
T_E(M;f) \arrow{r}                                 &                 T_{E'}(M;f').
\end{tikzcd}
\]
Here the commutativity comes from the fact that the inclusion $\{ e \} \hookrightarrow E'$ factors as $\{ e \} \hookrightarrow E \hookrightarrow E'$. In particular, we have $\ker(\rho_{M,(E,f)}) \subseteq \ker(\rho_{M,(E',f')})$. It easily follows that $I_s \subseteq I_t$ for each $t \ge s$. 

Using \Cref{thm:hls} we can choose $n$ large enough so that the morphism 
\[
\xymatrix{\lambda \colon M \ar[r]& \displaystyle \prod_{(E,f) \in \bA_R}H_E^* \otimes T_E(M;f)^{<n}}
\]
is a monomorphism in $\cal{R}_{fg}-\cU$. Here the map is induced by the product of the maps $\eta_{R,(E,f)}$. Recall from \Cref{lem:commcomponent} that $\eta_{M,(E,f)} \cong \kappa_{M,(E,f)} \circ \rho_{M,(E,f)}$, so that $\lambda$ factors through the product of the maps $\rho_{M,(E,f)} \colon M \to T_E(M;f)$.  

We factor $\lambda$ as a product $\lambda = \lambda_{\ge s} \times \lambda_{< s}$ where
\[
\xymatrix{\lambda_{\ge s} \colon M \ar[r]& \displaystyle \prod_{\stackrel{(E,f) \in \bA^t_R}{t \ge s}}H_E^* \otimes T_E(M;f)^{<n}}
\]
and
\[
\xymatrix{\lambda_{< s} \colon M \ar[r]& \displaystyle \prod_{\stackrel{(E,f) \in \bA^t_R}{t < s}}H_E^* \otimes T_E(M;f)^{<n}}
\]
Note that $\lambda_{\ge s}$ is the product of the maps $\phi_t$ for $t \ge s$. In particular, by the discussion above, we see that $I_s$ is contained in the kernel of $\lambda_{\ge s}$.   Furthermore, since $\lambda$ is injective, we deduce that the restriction of $\lambda_{< s}$ to $I_s \subseteq M$ is injective. By \Cref{lem:trdinjective} we have $\TrD_{R-\cU}(I_s) \le \TrD_{R-\cU}(N)$, where
\[
N = \displaystyle \prod_{\stackrel{(E,f) \in \bA^t_R}{t < s}}H_E^* \otimes T_E(R;f)^{<n}. 
\]
By \Cref{prop:supportcalc} we deduce that $\TrD_{R-\cU}(I_s)<s$, and hence by \Cref{prop:powell_mono} we have $\depth_R(M) \le \TrD_{R-\cU}(I_s)<s$, as required. 
 \end{proof}

\section{Examples}\label{sec:examples}
We finish with examples from group theory, homotopical group theory, and modular invariant theory. 
\subsection{Borel equivariant cohomology}\label{sec:borel}
The original theorems of Duflot and Carlson were proved for the cases of finite groups, and then later extended to the case of compact Lie groups. Here we extend these to some further group theoretic situations. We note that the proof of Duflot's theorem given by Broto and Henn \cite{BrotoHenn1993Some} can be used for all these cases below, however Carlson's proof relies on properties of a suitable transfer, which does not seem to exist in general. 

We begin with the relevant $T$-functor computations and an identification of Rector's category $\bA_{H_G^*}$. 
\begin{thm}\label{thm:tcalcsgroups}
	Assume we are in one of the following cases:
	\begin{enumerate}[label=(\alph*)]
		\item $G$ is a compact Lie group, and $X$ is a $G$-$CW$-complex with finitely many $G$-cells. 
		\item $G$ is a discrete group for which there exists a mod $p$ acyclic $G$-CW complex with finitely many $G$-cells and finite isotropy groups, and $X$ is any $G$-$CW$ complex with finitely many $G$-cells and with finite isotropy groups. 
	\end{enumerate}
	Then the following hold. 
	\begin{enumerate}
		\item The cohomology $H_G^*$ is an unstable Noetherian algebra, and there is an equivalence of categories $\bA_G \simeq \bA_{H_G^*}$ given by associating to $E \le G$ the pair $(E,\res_{G,E}^*)$ where $\res_{G,E}^*$ is the restriction homomorphism $H_G^* \to H_E^*$. 
		\item The cohomology $H_G^*(X)$ is an unstable finitely generated $H_G^*$-module, and there is an  isomorphism
		\[
T_E(H_G^*(X);\res_{G,E}^*) \cong H_{C_G(E)}^*(X^E). 
		\]
    \end{enumerate}
\end{thm}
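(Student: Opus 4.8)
The plan is to convert both statements into topology via Lannes' theorem (\Cref{rem:spaces}) and then invoke the known homotopy-theoretic identifications of the relevant mapping spaces. Write $X_{hG} = EG \times_G X$ for the Borel construction, so that $H_G^*(X) = H^*(X_{hG})$ and $H_G^* = H^*(BG)$ when $X$ is a point. The key geometric input is the Borel-equivariant form of the Dwyer--Zabrodsky theorem: for an elementary abelian $p$-group $E$ and a homomorphism $\alpha \colon E \to G$ there is a mod $p$ equivalence
\[
\Map(BE, X_{hG})_{\alpha} \simeq (X^E)_{hC_G(E)},
\]
where the component is indexed by (the conjugacy class of) $B\alpha \colon BE \to BG$ and $C_G(E)$ acts on the fixed points $X^E$. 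Granting this together with the hypotheses needed to apply Lannes' theorem, part (2) is immediate: $T_E(H_G^*(X); \res_{G,E}^*)$ is the cohomology of the corresponding component of $\Map(BE, X_{hG})$, namely $H^*((X^E)_{hC_G(E)}) = H_{C_G(E)}^*(X^E)$.

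First I would establish the algebraic hypotheses. Noetherianity of $H_G^*$ and finite generation of $H_G^*(X)$ over $H_G^*$ hold by the work of Venkov and Quillen in the compact Lie case, and in the discrete case by Quillen's analysis of groups admitting a finite-dimensional mod $p$ acyclic $G$-CW complex with finite isotropy. These finiteness statements also guarantee that $H_G^*(X)$ is of finite type and that the relevant components $T_E(H^*(X_{hG}); f)$ are of finite type and vanish in degree $1$; after $p$-completion these are exactly the conditions of \Cref{rem:spaces} ensuring that the comparison map $T_E(H^*(X_{hG}); f) \to H^*(\Map(BE, X_{hG})_f)$ is an isomorphism.

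Next I would carry out the mapping-space computation. In the compact Lie case this rests on the theorem of Dwyer--Zabrodsky and Notbohm identifying the components of $\Map(BE, BG)$ with the classifying spaces $BC_G(E)$; the fixed points $X^E$ then arise as the fibre over such a component when one analyses the fibration $X_{hG} \to BG$ fibrewise. In the discrete case I would instead exploit the given mod $p$ acyclic complex $Y$: since $Y$ is mod $p$ acyclic, the projection induces a mod $p$ equivalence $EG \times_G (Y \times X) \to EG \times_G X$, and the finite cellular filtration of $Y$, whose cells have finite stabilisers, reduces the computation to the finite-group case already treated, the exactness of $T_E$ allowing the identification to be transported through the resulting spectral sequence.

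Finally, part (1) is extracted from the same correspondence. By Lannes' theorem $\Hom_{\cK}(H_G^*, H_E^*) \cong [BE, (BG)^\wedge_p]$, which by Dwyer--Zabrodsky and Notbohm is the set of conjugacy classes of homomorphisms $E \to G$; the finite-generation condition defining the objects of $\bA_{H_G^*}$ forces, via Quillen's dimension theorem, such a homomorphism to be a monomorphism, so every object is of the form $(E, \res_{G,E}^*)$, and the morphisms correspond precisely to subconjugation. This yields the equivalence $\bA_G \simeq \bA_{H_G^*}$. I expect the main obstacle to lie in the discrete case: one must check carefully that the reduction through the mod $p$ acyclic complex is compatible both with the component decomposition \eqref{eq:tfunctorcomponent} of the $T$-functor and with the identification of fixed points, and one must verify the finite-type and $p$-completeness hypotheses of Lannes' theorem in a setting where $BG$ is not a finite complex.
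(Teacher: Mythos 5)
The central gap is your claimed verification of the hypotheses of \Cref{rem:spaces}: you assert that the finiteness statements ``guarantee \ldots that the relevant components $T_E(H^*(X_{hG});f)$ are of finite type and vanish in degree $1$.'' The degree-one vanishing is false, and already fails in the most basic instance of case (a): take $G = E = \Z/p$ and $X$ a point. Then $T_E(H_G^*;\mathrm{id}) \cong H_E^*$ by \Cref{ex:central}, and $H_E^1 \neq 0$ for every $p$; more generally the expected answer $H^*_{C_G(E)}(X^E)$ contains $H^1_{C_G(E)}$, which is nonzero whenever $C_G(E)$ admits a nontrivial homomorphism to $\Z/p$. So Lannes' mapping-space criterion, in the form quoted in \Cref{rem:spaces}, simply does not apply here, and no finiteness bookkeeping can repair this; there is also a circularity lurking, since establishing finite-type of $T_E(H^*_G(X);f)$ essentially presupposes the computation being proved. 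The identification $\Map(BE,(X_{hG})^{\wedge}_p)_{\alpha} \simeq ((X^E)_{hC_G(E)})^{\wedge}_p$ and the fact that the comparison map from $T_E$ is an isomorphism \emph{for these particular spaces} are separate theorems (Dwyer--Zabrodsky, Notbohm, and Lannes' work on classifying spaces), proved by methods specific to classifying spaces and Borel constructions, not as instances of the degree-one criterion.

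This is also where your route diverges from the paper, whose proof is by citation to exactly those results: finite generation to Quillen \cite{Quillen1971spectrum}, the equivalence $\bA_G \simeq \bA_{H_G^*}$ to Rector \cite{Rector1984Noetherian} (see also \cite[Section I.5.3]{HennLannesSchwartz1995Localizations}), the point-case $T$-functor computation to Lannes \cite{lannes_unpublished} and Henn's notes \cite{henn_notes}, and the Borel-construction statement in both cases to \cite[Appendix A]{Henn1996Commutative}, whose method is essentially algebraic: induction over the finitely many $G$-cells using exactness of $T_E$, with the orbit case $H_G^*(G/H) \cong H_H^*$ reducing everything to the point case for the relevant (compact Lie, resp.\ finite) subgroups, together with the combinatorial matching of $(G/H)^E$ against the component decomposition of $T_E(H_H^*)$. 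Your sketches for case (b) (cell induction plus exactness of $T_E$) and for part (1) (Lannes' classification $[BE,BG^{\wedge}_p] \cong \Rep(E,G)$ combined with Quillen's theorem that finite generation forces injectivity of $E \to G$) are close to that literature and sound in outline; it is the compact Lie case of part (2) that must be repaired, by invoking the classifying-space/Borel-construction theorems directly rather than attempting to deduce them from \Cref{rem:spaces}.
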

\begin{proof}
In case (1), the finite generation is due to Quillen \cite{Quillen1971spectrum}, while the equivalence of the categories was shown by Rector \cite[Proposition 2.6]{Rector1984Noetherian}, see also \cite[Section I.5.3]{HennLannesSchwartz1995Localizations}. The $T$-functor computation is due to Lannes, in unpublished work \cite{lannes_unpublished}, however see also the notes of Henn \cite{henn_notes}. (2) is shown by Henn \cite[Appendix A]{Henn1996Commutative}. 
\end{proof}
\sloppy
We recall that there is a morphism $\rho_{H_G(X)^*,(E,\res_{G,E}^*)} \colon H_G^*(X) \to T_E(H_G^*(X);\res_{G,E}^*) \cong H_{C_G(E)}^*(X^E)$. This agrees with the usual restriction map $\res^*_{(G,X),(C_G(E),X^E)} \colon H_G^*(X) \to H_{C_G(E)}^*(X^E)$. We say that $E \le G$ is $X$-cohomologically $p$-central if it is $H_G^*(X)$-central in the sense of \Cref{defn:centralobject}, i.e., if $\res_{(G,X),(C_G(E),X^E)}^*$ is an isomorphism. In the special case where $X$ is a point, we simply call this cohomologically $p$-central.  From the previous theorem we deduce the following. 
\begin{cor}
	If $E < G$ is a central elementary abelian $p$-subgroup of $G$ that acts trivially on $X$, then $E$ is $X$-cohomologically $p$-central.
\end{cor}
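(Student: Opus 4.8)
The plan is to reduce the statement to two elementary observations—one group-theoretic, one topological—and then quote the identifications supplied by \Cref{thm:tcalcsgroups} together with the remark immediately preceding the corollary. No genuine computation should be needed; everything is a matter of substituting the hypotheses into the already-established restriction-map description.

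First I would record the two reductions. Since $E$ is central in $G$, every element of $G$ commutes with $E$, so its centralizer is all of $G$; that is, $C_G(E) = G$. Since $E$ acts trivially on $X$, every point of $X$ is fixed, so $X^E = X$. Substituting these into the identification of \Cref{thm:tcalcsgroups}(2) gives
\[
T_E(H_G^*(X);\res_{G,E}^*) \cong H_{C_G(E)}^*(X^E) = H_G^*(X).
\]

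Next I would invoke the paragraph preceding the corollary, where it is noted that the canonical map $\rho_{H_G^*(X),(E,\res_{G,E}^*)}$ agrees with the restriction map $\res^*_{(G,X),(C_G(E),X^E)} \colon H_G^*(X) \to H_{C_G(E)}^*(X^E)$. Under the substitutions $C_G(E) = G$ and $X^E = X$, this restriction map is the map on Borel constructions induced by the identity inclusions of groups and of spaces, hence it is the identity map of $H_G^*(X)$, and in particular an isomorphism. By the definition of $X$-cohomologically $p$-central (equivalently, $H_G^*(X)$-centrality in the sense of \Cref{defn:centralobject}), the assertion that $\rho_{H_G^*(X),(E,\res_{G,E}^*)}$ is an isomorphism is exactly the conclusion that $E$ is $X$-cohomologically $p$-central.

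The only point requiring a little care—and the closest thing to an obstacle—is confirming that after the substitutions the restriction map really becomes the \emph{identity}, rather than merely some self-map of $H_G^*(X)$. This follows because, once $C_G(E) = G$ and $X^E = X$, the induced map $EC_G(E)\times_{C_G(E)} X^E \to EG\times_G X$ is the map induced by identity maps, so the induced homomorphism on cohomology is the identity. Hence the argument involves no actual calculation beyond unwinding these identifications.
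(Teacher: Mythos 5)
Your proof is correct and is exactly the argument the paper intends (the paper simply says the corollary is deduced "from the previous theorem"): centrality gives $C_G(E)=G$, trivial action gives $X^E=X$, and then $\rho_{H_G^*(X),(E,\res_{G,E}^*)}$ is identified with the restriction map, which becomes the identity and hence an isomorphism, which is the definition of $X$-cohomologically $p$-central. Your extra care in checking that the map is the identity on Borel constructions, not merely some self-map, is a sensible and correct elaboration of the same route.
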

\begin{rem}
	If $G$ is a finite $p$-group, then any cohomologically $p$-central subgroup $E \le G$ is a central elementary abelian $p$-subgroup, but this false in general for compact Lie groups. An example is given at $p=2$ by the inclusion of a Sylow 2-subgroup in $\Sigma_3$ (which has trivial center). The point is that $(B\Sigma_3)^\wedge_2 \simeq B\Z/2$, so $\F_2$-cohomology cannot tell the difference between the two. In general, the maximal cohomolgically $p$-central subgroup of a compact Lie group $G$ is the maximal central elementary abelian $p$-subgroup of $G/\cal{O}_{p'}(G)$, where $\cal{O}_{p'}(G)$ is the largest normal $p'$-subgroup of $G$, see \cite[Theorem 1]{Mislin1992Cohomologically}. 
\end{rem}
We now wish to apply \Cref{thm:duflot_regular} with $R = H_G^*$ and $S = H_G^*(X)$, with $f \colon H_G^* \to H_G^*(X)$ the map induced by sending $X$ to a point. From the discussions above, we deduce the following. 
\begin{thm}
	Let $G$ and $X$ be as in \Cref{thm:tcalcsgroups}. Let $C < G$ be any non-trivial central elementary abelian $p$-subgroup that acts trivially on $X$, and write $\frak m$ for the maximal ideal of $H_G^*$ generated by homogeneous elements of positive degree. 
	\begin{enumerate}
	 	\item If $x_1,\ldots,x_n \in \frak m$ is a sequence such that the restrictions of $x_1,\ldots,x_n$ form a $H_G^*$-regular sequence in $H_C^*$, then $x_1,\ldots,x_n$ form a $H_G^*-$regular sequence in $H_G^*(X)$.
	 	\item The depth of $H_G^*(X)$ is at least $\rank(C)$. 
	 \end{enumerate} 
\end{thm}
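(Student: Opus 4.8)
The plan is to obtain both statements as direct applications of the general results of the previous section, taking $R = H_G^*$, $S = H_G^*(X)$, and $f \colon H_G^* \to H_G^*(X)$ the map induced by the projection $X \to *$. First I would verify that these data fit the hypotheses of \Cref{thm:duflot_regular} and \Cref{cor:duflot}. By \Cref{thm:tcalcsgroups}(1) the algebra $R = H_G^*$ is a connected Noetherian unstable algebra, and the equivalence $\bA_G \simeq \bA_R$ shows that $(C,\res_{G,C}^*)$ is an object of $\bA_R$; in particular $H_C^*$ is a finitely generated $R$-module via $\res_{G,C}^*$. By \Cref{thm:tcalcsgroups}(2) the module $S = H_G^*(X)$ lies in $R_{fg}-\cU$, so that $f$ is a finite $\cK$-morphism.

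Next I would check the centrality hypothesis, which is where the geometric input enters. Because $C$ is central in $G$ we have $C_G(C) = G$, and because $C$ acts trivially on $X$ we have $X^C = X$; the computation of \Cref{thm:tcalcsgroups}(2) therefore specializes to $T_C(H_G^*(X);\res_{G,C}^*) \cong H_{C_G(C)}^*(X^C) = H_G^*(X) = S$. Under this identification the map $\rho_{S,(C,\res_{G,C}^*)}$ becomes the restriction map $\res^*_{(G,X),(C_G(C),X^C)}$, which is the identity; equivalently, by the corollary immediately preceding this theorem, a central elementary abelian $p$-subgroup acting trivially on $X$ is $X$-cohomologically $p$-central. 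This is precisely the assertion that the non-trivial pair $(C,\res_{G,C}^*)$ is $S$-central in the sense of \Cref{defn:centralobject}.

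With the hypotheses established, part (1) follows at once from \Cref{thm:duflot_regular}, and part (2) from \Cref{cor:duflot}: since $H_C^*$ is Cohen--Macaulay of depth $\rank(C)$ and finitely generated over $R$, \Cref{lem:fgdepth} gives $\depth_R(H_C^*) = \rank(C)$, so a maximal $R$-regular sequence in $H_C^*$ has length $\rank(C)$ and, by part (1), remains $R$-regular in $H_G^*(X)$. I do not expect a genuine obstacle here, as all the substantive work lies in \Cref{thm:duflot_regular}, \Cref{cor:duflot}, and the group-theoretic inputs of \Cref{thm:tcalcsgroups}. The points demanding care are the bookkeeping ones flagged above: confirming that $H_G^*(X)$ is genuinely a finitely generated $H_G^*$-module, and matching the abstract map $\rho_{S,(C,\res_{G,C}^*)}$ with the concrete equivariant restriction map so that centrality in $\bA_R$ translates into the required isomorphism.
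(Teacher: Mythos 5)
Your proposal is correct and follows essentially the same route as the paper: the paper likewise deduces both parts directly from \Cref{thm:duflot_regular} and \Cref{cor:duflot} with $R = H_G^*$, $S = H_G^*(X)$, using \Cref{thm:tcalcsgroups} for the Noetherian/finiteness hypotheses and the identification $T_C(H_G^*(X);\res_{G,C}^*) \cong H_{C_G(C)}^*(X^C)$, together with the preceding corollary that a central subgroup acting trivially on $X$ is $X$-cohomologically $p$-central. Your verification that $\rho_{S,(C,\res_{G,C}^*)}$ matches the equivariant restriction map, and hence is an isomorphism, is exactly the centrality check the paper relies on.
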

In the case a finite group, (2) is precisely the original theorem proved by Duflot \cite{Duflot1981Depth}, while for compact Lie groups this is due to Broto and Henn \cite{BrotoHenn1993Some}. 

We also note that if $G$ is a compact Lie group and $X$ a point, then we can apply \Cref{thm:duflotretract}. Indeed, let $T$ be a maximal torus in $G$, $N(T)$ the normalizer of $T$ in $G$, $N_p(T)$ the inverse image in $N(T)$ of a Sylow $p$-subgroup of $N(T)/T$. Then we can take the map $i \colon H_G^* \to H_{N_p(T)}^*$ to be the natural restriction map. By the Becker--Gottlieb transfer, this map has a retract that is a map of $H_G^*$-modules, and a map of unstable modules, and $N_p(T)$ has a non-trivial center, see \cite[Proposition 1.3]{DwyerWilkerson1992cohomology}. We thus deduce the following. 
\begin{thm}
Let $G$ be a compact Lie group, $C_{N_p(T)}$ the maximal central elementary abelian $p$-subgroup of $N_p(T)$, and $C \le C_{N_p(T)}$ a central elementary abelian $p$-subgroup of $N_p(T)$.  
	\begin{enumerate}
	 	\item If $x_1,\ldots,x_n \in \frak m$ is a sequence of homogeneous elements such that the restriction of $x_1,\ldots,x_n$ form a regular sequence in $H_C^*$, then $x_1,\ldots,x_n$ is a regular sequence in $H_{G}^*$. 
	 	\item The depth of $H_G^*$ is greater than or equal to the rank of $C_{N_p(T)}$. 
	 \end{enumerate} 	
\end{thm}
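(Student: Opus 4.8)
The plan is to deduce both parts from \Cref{thm:duflotretract}, taking $R = H_G^*$, $P = H_{N_p(T)}^*$, and $i \colon H_G^* \to H_{N_p(T)}^*$ the restriction map; the central object required by \Cref{thm:duflotretract} will be $(C, \res_{N_p(T),C}^*)$ for part (1) and $(C_{N_p(T)}, \res_{N_p(T),C_{N_p(T)}}^*)$ for part (2). Granting the three hypotheses of \Cref{thm:duflotretract}, part (1) is exactly its conclusion (1), and part (2) follows from its conclusion (2) applied to the maximal central subgroup $C_{N_p(T)}$. So the work is entirely in verifying the three hypotheses.

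Hypothesis (1) is finite generation: both $H_G^*$ and $H_{N_p(T)}^*$ are Noetherian unstable algebras by \Cref{thm:tcalcsgroups}(1), applied to $G$ and to the compact Lie group $N_p(T)$ respectively. That $H_{N_p(T)}^*$ is finitely generated as a module over $H_G^*$ I would obtain from the fact that $G/N_p(T)$ is a compact homogeneous space, so $H^*(G/N_p(T);\F_p)$ is finite-dimensional; feeding this into the Serre spectral sequence of $G/N_p(T) \to BN_p(T) \to BG$ and using that $H_G^*$ is Noetherian gives a Venkov-type finiteness conclusion.

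Hypothesis (3), the central object: by \cite[Proposition 1.3]{DwyerWilkerson1992cohomology} the group $N_p(T)$ has a non-trivial center, so $C_{N_p(T)} \neq 1$. Any central elementary abelian $p$-subgroup $C \le C_{N_p(T)}$ acts trivially on a point, so the Corollary following \Cref{thm:tcalcsgroups} shows it is cohomologically $p$-central; that is, $\rho_{H_{N_p(T)}^*,(C,\res^*)}$ is an isomorphism, so $(C, \res_{N_p(T),C}^*)$ is a central object of $\bA_{H_{N_p(T)}^*}$, non-trivial whenever $C \neq 1$.

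The crux is hypothesis (2), the $H_G^*$-linear retract, and this is exactly where the choice of the $p$-normalizer $N_p(T)$ is forced. I would use the Becker--Gottlieb transfer $\tr \colon H_{N_p(T)}^* \to H_G^*$, for which $\tr \circ \res$ is multiplication by the Euler characteristic $\chi(G/N_p(T))$. The key computation is that this Euler characteristic is prime to $p$: from the classical identity $\chi(G/N(T)) = 1$ together with multiplicativity of $\chi$ over the bundle $N(T)/N_p(T) \to G/N_p(T) \to G/N(T)$ one gets $\chi(G/N_p(T)) = [N(T):N_p(T)] = [W : \Syl_p(W)]$, a unit in $\F_p$. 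Hence $r := \chi(G/N_p(T))^{-1}\tr$ is an additive left inverse of $i$, and it is $H_G^*$-linear by the projection formula (and a map of unstable modules, which is what the subsequent remark on Notbohm's theorem uses). The main obstacle is really the bookkeeping here---verifying that the transfer composite is a mod-$p$ unit precisely because we passed to $N_p(T)$, rather than to $T$ (where the relevant index is a power of $p$, hence $0$ mod $p$) or to $N(T)$ (which need carry no non-trivial $p$-central subgroup); once this is pinned down, the remaining hypotheses are direct appeals to \Cref{thm:tcalcsgroups} and \cite{DwyerWilkerson1992cohomology}.
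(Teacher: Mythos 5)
Your proposal is correct and follows essentially the same route as the paper: the paper also deduces both parts from \Cref{thm:duflotretract} applied to $i \colon H_G^* \to H_{N_p(T)}^*$, with the retract supplied by the Becker--Gottlieb transfer and the non-trivial center of $N_p(T)$ supplying the central object. The only difference is one of packaging: the paper delegates your verifications of hypotheses (1)--(3) (the Venkov-type finiteness, the Euler characteristic computation $\chi(G/N_p(T)) = [W:\Syl_p(W)]$ being prime to $p$, and the existence of a central element of order $p$) to a single citation of \cite[Proposition 1.3]{DwyerWilkerson1992cohomology}, whereas you prove them directly.
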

Finally, Carlson's depth theorem can be extended to all the classes considered in this section. This follows from \Cref{thm:carlson,thm:tcalcsgroups}.
\begin{thm}\label{thm:carlson_borel}
	Let $G$ and $X$ be as in \Cref{thm:tcalcsgroups}. If $\depth_{H_G^*}(H_G^*(X)) \ge s$, then the morphism
	\[
\xymatrix{H_G^*(X) \ar[r]& \displaystyle\prod_{\stackrel{E < G}{\dim E = s}}H_{C_G(E)}^*(X^E)}
	\]
	is injective.
\end{thm}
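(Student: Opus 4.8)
The plan is to deduce this directly from our general version of Carlson's theorem (\Cref{thm:carlson}) by specializing to $R = H_G^*$ and $M = H_G^*(X)$, and then to translate the abstract indexing category and the $T$-functor components into group-theoretic data using the dictionary provided by \Cref{thm:tcalcsgroups}. Since the hard analytic input (the injectivity of $\phi_s$) is already packaged in \Cref{thm:carlson}, the work here is almost entirely a matter of checking hypotheses and matching up the two sides.

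First I would verify that the hypotheses of \Cref{thm:carlson} hold. By \Cref{thm:tcalcsgroups}(1), in either case (1) or (2) the cohomology $H_G^*$ is a Noetherian unstable algebra, so $R = H_G^*$ is an admissible choice. By \Cref{thm:tcalcsgroups}(2), $H_G^*(X)$ is a finitely generated unstable $H_G^*$-module, i.e.\ $M = H_G^*(X) \in R_{fg}-\cU$. The standing hypothesis $\depth_{H_G^*}(H_G^*(X)) \ge s$ is then exactly the depth hypothesis $\depth_R(M) \ge s$ of \Cref{thm:carlson}, so that theorem applies and yields injectivity of the map $\phi_s \colon M \to \prod_{(E,f) \in \bA_R^s} T_E(M;f)$.

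Next I would identify the target of $\phi_s$ with the product in the statement. Under the equivalence $\bA_G \simeq \bA_{H_G^*}$ of \Cref{thm:tcalcsgroups}(1), which sends $E \le G$ to the pair $(E,\res_{G,E}^*)$, the full subcategory $\bA_{H_G^*}^s$ of rank-$s$ objects corresponds precisely to the elementary abelian $p$-subgroups $E < G$ of rank $s$ (recall $\dim E = \rank E$ for such $E$). For each such object \Cref{thm:tcalcsgroups}(2) supplies the identification
\[
T_E\bigl(H_G^*(X);\res_{G,E}^*\bigr) \cong H_{C_G(E)}^*(X^E),
\]
and, as noted after \Cref{thm:tcalcsgroups}, the component map $\rho_{H_G^*(X),(E,\res_{G,E}^*)}$ agrees with the restriction map $\res^*_{(G,X),(C_G(E),X^E)}$. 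Assembling these identifications over a finite skeleton of $\bA_R^s$ therefore converts $\phi_s$ into the product of restriction maps displayed in the statement, and its injectivity is the desired conclusion.

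The only point requiring minor care — and the closest thing to an obstacle — is the bookkeeping that the indexing in \Cref{thm:carlson}, which formally runs over objects of $\bA_R^s$ (finite up to isomorphism by \Cref{prop:rectorprops}), matches the product over rank-$s$ elementary abelian subgroups $E < G$ written in the statement. This is handled by the equivalence of categories together with the observation that isomorphic objects of $\bA_R$ contribute identified components, so that passing to the skeleton does not change injectivity. With this identification in place the theorem is immediate.
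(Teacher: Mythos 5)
Your proposal is correct and follows exactly the paper's own route: the paper deduces this theorem by combining \Cref{thm:carlson} with \Cref{thm:tcalcsgroups}, which is precisely your argument (you merely spell out the hypothesis-checking and the identification of $\rho_{M,(E,\res_{G,E}^*)}$ with the restriction map, which the paper records in the discussion following \Cref{thm:tcalcsgroups}).
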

When $G$ is a finite group and $X$ is a point, we recover Carlson's original theorem. 
\subsection{Further group theoretic cases}
In this section, we give two further examples from group theory. In these cases, we do not have a computation for $H_G^*(X)$, but only for $H_G^*$ itself. In order to keep our notation compact, $H_G^*$ should be understood to be the continuous mod-$p$ cohomology when $G$ is a profinite group, e.g., if $G$ is the inverse limit of finite groups $G_i$, then $H_G^* \cong \colim H^*_{G_i}$. The third class we consider may not be familiar to the reader, so we repeat the definition due to Broto and Kitchloo \cite{BrotoKitchloo2002Classifying}. 
\begin{defn}[Broto--Kitchloo]
  Let $\cX$ be a class of compactly generated Hausdorff topological groups and let $p$ be a fixed prime. We define a new class of groups, denoted $\cal{K}_1\cal{X}$ as the class of compactly generated Hausdorff topological groups $G$ for which there exists a finite $G$-CW complex with the following properties:
  \begin{enumerate}
    \item The isotropy subgroups of $X$ belong to the class $\cX$. 
    \item For every finite $p$-subgroup $\pi < G$, the fixed point space $X^{\pi}$ is $p$-acyclic. 
  \end{enumerate}
\end{defn}
In particular, Kac--Moody groups belong to $\cal{K}_1\cX$ when $\cX$ is the class of compact Lie groups. 

We now state the relevant $T$-functor calculations. 
\begin{thm}\label{thm:group_point}
	Assume we are in one of the following cases:
	\begin{enumerate}[label=(\alph*)]
		\item $G$ is a profinite group such that the continuous mod $p$ cohomology $H_G^*$ is finitely generated as an $\F_p$-algebra. 
    \item $G$ is a group of finite virtual cohomological dimension. 
		\item $G$ is in $\cal{K}_1\cX$ where $\cX$ is the class of compact Lie groups (for example, a Kac--Moody group).
	\end{enumerate}
	Then the following hold. 
	\begin{enumerate}
		\item The cohomology $H_G^*$ is an unstable Noetherian algebra, and there is an equivalence of categories $\bA_G \simeq \bA_{H_G^*}$ given by associating to $E \le G$ the pair $(E,\res_{G,E}^*)$ where $\res_{G,E}^*$ is the restriction homomorphism $H_G^* \to H_E^*$. 
		\item
		There are isomorphisms
		\[
	T_E(H_G^*;\res_{G,E}^*) \cong H_{C_G(E)}^*. 	
		\]
	\end{enumerate}
\end{thm}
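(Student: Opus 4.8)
The plan is to regard this as a compilation of three $T$-functor computations, proved by the same template as \Cref{thm:tcalcsgroups}, with the unifying engine being Lannes' identification of $T_E H^*(Y)$ with the cohomology of a mapping space out of $BE$. Concretely, for a space $Y$ satisfying the finiteness and $p$-completeness hypotheses of \cite{lannes_ihes}, there is an isomorphism $T_E H^*(Y) \cong H^*(\Map(BE, Y))$ compatible with the decomposition of the mapping space over homotopy classes of maps $BE \to Y$. Taking $Y$ to be (a $p$-completed model for) $BG$, the components are indexed by conjugacy classes of homomorphisms $E \to G$, the component at the inclusion of $E \le G$ is $BC_G(E)$, and one reads off $T_E(H_G^*; \res_{G,E}^*) \cong H_{C_G(E)}^*$. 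The equivalence $\bA_G \simeq \bA_{H_G^*}$ is then the assertion that conjugacy classes of elementary abelian subgroups exhaust the objects $(E,f)$ of Rector's category, which follows once the $T$-functor is computed together with Quillen-type control of the Krull structure; Noetherianity of $H_G^*$ is the remaining input. So in each case the content is threefold: finite generation of $H_G^*$; the finiteness hypotheses of Lannes' theorem for the relevant classifying space; and the mapping-space components being the centralizers.

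I would then dispatch the three cases by appealing to the literature. In case (a), a profinite group with finitely generated continuous cohomology, I would write $H_G^* \cong \colim_i H^*_{G_i}$ for $G = \lim_i G_i$ and use that $T_E$, being a left adjoint, commutes with the filtered colimit; since a nontrivial elementary abelian $E$ lifts to some finite quotient, this reduces the computation to the finite-group case, and the centralizer identification follows by passing to the colimit of the $C_{G_i}(E)$. In cases (b) and (c) the finite generation, the category equivalence, and the $T$-functor computation are supplied wholesale by Henn \cite{Henn1996Commutative} and Broto--Kitchloo \cite{BrotoKitchloo2002Classifying} respectively; in both one chooses a finite-dimensional mod $p$ acyclic $G$-CW complex with finite (resp. compact Lie) isotropy and reduces the computation to the isotropy groups, exactly as in case (b) of \Cref{thm:tcalcsgroups}.

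The main obstacle is the verification of the finiteness hypotheses needed to apply Lannes' theorem outside the compact-Lie setting, namely that $T_E H_G^*$ is of finite type and vanishes in degree one, so that the mapping-space identification is legitimate; this is exactly the delicate input that the cited works supply, and it is where the hypotheses on $G$ (finite generation of continuous cohomology, finite virtual cohomological dimension, membership in $\cal{K}_1\cX$) genuinely enter. A secondary point demanding care is the profinite case, where one must confirm that the continuous centralizer cohomology is the colimit of the finite-level centralizer cohomologies and that this colimit is compatible with $T_E$.
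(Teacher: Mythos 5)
Your sketch for case (a) is fine in outline: continuity of continuous cohomology and of $T_E$ along the system of finite quotients, reduction to Lannes' theorem for finite groups, and identification of $\colim_i H^*_{C_{G_i}(E)}$ with $H^*_{C_G(E)}$ is precisely the strategy of the reference the paper cites for that case, \cite{Henn1998Centralizers}; the points you flag as demanding care (matching components of the $T$-functor across the colimit, and comparing the image of $C_G(E)$ in $G_i$ with $C_{G_i}(E)$, which needs a compactness/cofinality argument) are exactly the content of that paper. The serious problem is case (b). A group of finite virtual cohomological dimension admits a finite-\emph{dimensional} mod $p$ acyclic $G$-CW complex with finite isotropy, but in general \emph{not} one with finitely many $G$-cells, so the reduction ``exactly as in case (b) of \Cref{thm:tcalcsgroups}'' --- whose hypothesis is cocompactness, and whose proof via \cite[Appendix A]{Henn1996Commutative} exploits the finitely many cells --- does not run. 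The two classes of groups really are different: by Smith theory every subgroup of order $p$ has nonempty fixed points on such a complex, so a cocompact model with finite isotropy would force finitely many conjugacy classes of subgroups of order $p$, and there exist groups of finite vcd (even of type VF, by Leary--Nucinkis) with infinitely many such classes. This is why the paper's proof cites \cite{lannes_unpublished} and \cite[Theorem 5.2]{HennLannesSchwartz1995Localizations} for case (b), where the finite-dimensional, non-cocompact situation is handled by the localization machinery of that paper rather than by an induction over cells; your citation of \cite{Henn1996Commutative} points at the cocompact case, not at (b).

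A secondary issue is that your ``unifying engine'' has the logic backwards. Lannes' theorem converts finiteness properties of $T_E(H^*(Y);f)$ into a computation of $H^*(\Map(BE,Y)_f)$; it does not tell you that the components of $\Map(BE,BG^{\wedge}_p)$ are indexed by $\Rep(E,G)$, nor that the component of the inclusion is $BC_G(E)^{\wedge}_p$. For compact Lie groups that identification is a separate theorem (Dwyer--Zabrodsky/Notbohm), and for the classes at hand it is not available a priori: in \cite{Henn1998Centralizers} and \cite{BrotoKitchloo2002Classifying} the mapping-space descriptions are \emph{consequences} of algebraic $T$-functor computations, not inputs to them, so the delicate point is not merely ``verifying the finiteness hypotheses of Lannes' theorem.'' Relatedly, in case (c) the equivalence $\bA_G \simeq \bA_{H_G^*}$ is not supplied ``wholesale'' by Broto--Kitchloo: it is not in their paper, and the paper's proof has to supply it separately, combining their isomorphism $\Hom_{\cK}(H_G^*,H_E^*) \cong \Rep(E,G)$ with the argument of \cite[Section I.5.3]{HennLannesSchwartz1995Localizations}.
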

\begin{proof}
		The case of profinite groups is shown in \cite{Henn1998Centralizers}, see also \cite[Theorem 0.2(c)]{Henn1996Commutative}. Case (b) is due to Lannes \cite{lannes_unpublished}, see also \cite[Theorem 5.2]{HennLannesSchwartz1995Localizations} and the discussion on page 49 of \cite{HennLannesSchwartz1995Localizations}. 

				For (c), everything except the equivalence of categories is shown by Broto and Kitchloo \cite{BrotoKitchloo2002Classifying}, see Theorems A and B. The equivalence of the categories is shown in the same way as the compact Lie group case \cite[Section I.5.3]{HennLannesSchwartz1995Localizations}. The key point is that for any elementary abelian $p$-group $E$ there is an isomorphism \cite[Equation (8)]{BrotoKitchloo2002Classifying}
				\[
\Hom_{\cal{K}}(H_G^*,H_E^*) \cong \Rep(E,G)
				\]
				where $\Rep(E,G)$ is the quotient of $\Hom(E,G)$ by the conjugation action of $G$. 
\end{proof}
\begin{rem}
	The assumption that the profinite group $G$ is finitely generated holds in many interesting cases, for example profinite $p$-adic analytic groups. 
\end{rem}
With this in mind, the following theorems follow precisely as in the previous section. 
\begin{thm}
  Let $G$ be as in \Cref{thm:group_point}. Let $C < G$ be a central elementary abelian $p$-subgroup, and write $\frak m$ for the maximal ideal of $H_G^*$ generated by homogeneous elements of positive degree. 
  \begin{enumerate}
    \item If $x_1,\ldots,x_n \in \frak m$ is a sequence such that the restrictions of $x_1,\ldots,x_n$ form a regular sequence in $H_C^*$, then $x_1,\ldots,x_n$ form a $H_G^*-$regular sequence in $H_G^*$.
    \item The depth of $H_G^*$ is at least $\rank(C)$. 
   \end{enumerate} 
\end{thm}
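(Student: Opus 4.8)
The plan is to reduce both statements to the general results of \Cref{sec:depth}, exactly as the phrase ``follow precisely as in the previous section'' suggests. The essential input is that a central elementary abelian $p$-subgroup $C < G$ gives rise to a central object of $\bA_{H_G^*}$ in the sense of \Cref{defn:centralobject}.

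First I would verify that $(C,\res_{G,C}^*)$ is a well-defined $H_G^*$-central object of $\bA_{H_G^*}$. That it lies in $\bA_{H_G^*}$ is automatic: under the equivalence $\bA_G \simeq \bA_{H_G^*}$ of \Cref{thm:group_point}(1) the pair $(C,\res_{G,C}^*)$ corresponds to the elementary abelian subgroup $C \le G$, so $H_C^*$ is finitely generated over $H_G^*$ by construction. For centrality, I use that $C$ central in $G$ means $C_G(C) = G$, so the $T$-functor computation of \Cref{thm:group_point}(2) gives an isomorphism $T_C(H_G^*;\res_{G,C}^*) \cong H_{C_G(C)}^* = H_G^*$. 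Moreover, under this identification the canonical map $\rho_{H_G^*,(C,\res_{G,C}^*)}$ agrees with the restriction map $H_G^* \to H_{C_G(C)}^*$, which for $C_G(C) = G$ is the identity; hence it is an isomorphism and $(C,\res_{G,C}^*)$ is central. Equivalently, this records the analogue for the groups of \Cref{thm:group_point} of the fact, recalled in \Cref{sec:borel}, that a central elementary abelian $p$-subgroup is cohomologically $p$-central.

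With this in hand, part (1) is an immediate application of \Cref{thm:duflot_regular}, taking $R = S = H_G^*$ and $f = \id$: the pair $(C,\res_{G,C}^*)$ is $S$-central, so any sequence $x_1,\ldots,x_n \in \frak m$ whose restrictions form an $H_G^*$-regular sequence in $H_C^*$ (via $g = \res_{G,C}^*$) is then an $H_G^*$-regular sequence in $S = H_G^*$. For part (2), \Cref{cor:duflot} applied to the same data yields $\depth_{H_G^*}(H_G^*) \ge \rank(C)$, and $\depth_{H_G^*}(H_G^*) = \depth(H_G^*)$ by \Cref{lem:fgdepth}.

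The only point requiring care is the identification of $\rho_{H_G^*,(C,\res_{G,C}^*)}$ with the centralizer restriction map; once this naturality statement is granted---it follows from the construction of the isomorphism in \Cref{thm:group_point}(2), just as in the Borel-equivariant case of \Cref{sec:borel}---the theorem follows formally. I therefore expect no genuine obstacle in this deduction: the real content sits upstream, in the $T$-functor computation \Cref{thm:group_point}, which is imported from the work of Henn, Lannes, and Broto--Kitchloo for the three respective classes of groups.
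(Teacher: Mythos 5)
Your proposal is correct and follows essentially the same route as the paper: the paper's proof is precisely the remark that these statements "follow precisely as in the previous section," i.e., one uses \Cref{thm:group_point} to see that a central $C < G$ gives a central object $(C,\res_{G,C}^*) \in \bA_{H_G^*}$ (since $C_G(C) = G$ and $\rho$ is identified with the restriction to the centralizer), and then applies \Cref{thm:duflot_regular} and \Cref{cor:duflot} with $R = S = H_G^*$. Your care about identifying $\rho_{H_G^*,(C,\res_{G,C}^*)}$ with the restriction map matches the identification the paper makes explicitly in the Borel-equivariant case.
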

Carlson's theorem takes the expected form. 
\begin{thm}\label{thm:carlson_group2}
  Let $G$ be as in \Cref{thm:group_point}. If $\depth(H_G^*) \ge s$, then the product of restriction maps 
         \[
\xymatrix{
H_G^* \ar[r] & \displaystyle \prod_{\stackrel{E < S}{\rank(E) = s}} H_{C_G{(E)}}^*
}
   \]
   is injective.
\end{thm}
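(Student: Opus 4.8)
The plan is to deduce the statement directly from the abstract detection theorem \Cref{thm:carlson}, applied with $R = M = H_G^*$, and then to translate the $T$-functor side into group-theoretic language using \Cref{thm:group_point}. First I would note that in all three cases \Cref{thm:group_point}(1) guarantees that $H_G^*$ is a Noetherian unstable algebra, so I may take $R = H_G^*$ and view $M = H_G^*$ as the free rank-one object of $R_{fg}-\cU$. The hypothesis then reads $\depth_R(R) = \depth(H_G^*) \ge s$, so \Cref{thm:carlson} applies and tells me that
\[
\phi_s \colon H_G^* \to \prod_{(E,f) \in \bA_R^s} T_E(H_G^*;f),
\]
the product of the canonical maps $\rho_{H_G^*,(E,f)}$, is injective.

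The remaining work is purely a matter of identifying this map with the asserted product of restriction maps. Using the equivalence $\bA_G \simeq \bA_{H_G^*}$ of \Cref{thm:group_point}(1), the objects of $\bA_{H_G^*}^s$ are exactly the pairs $(E,\res_{G,E}^*)$ for $E < G$ elementary abelian of rank $s$. By \Cref{thm:group_point}(2) the corresponding target component is $T_E(H_G^*;\res_{G,E}^*) \cong H_{C_G(E)}^*$, and under this identification the structure map $\rho_{H_G^*,(E,\res_{G,E}^*)}$ is the restriction map $H_G^* \to H_{C_G(E)}^*$ --- exactly as recorded for the Borel case just before \Cref{thm:carlson_borel}. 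Substituting these identifications turns $\phi_s$ into the stated product of restriction maps, which is therefore injective.

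The only thing demanding any care is the bookkeeping of index sets. Since $\bA_R$ has a finite skeleton (\Cref{prop:rectorprops}(1)), the product in \Cref{thm:carlson} is really a finite one indexed by conjugacy classes of rank-$s$ elementary abelian $p$-subgroups, whereas the target in the statement is written as a product over such subgroups themselves; conjugate subgroups $E$ and $gEg^{-1}$ have conjugate centralizers, and since inner automorphisms act trivially on $H_G^*$ the two restriction maps agree after the canonical identification $H_{C_G(E)}^* \cong H_{C_G(gEg^{-1})}^*$, so the two formulations of injectivity are equivalent. I do not anticipate a genuine obstacle: all the analytic content sits in \Cref{thm:carlson} and in the $T$-functor computations of \Cref{thm:group_point}, and what is left is the routine translation above.
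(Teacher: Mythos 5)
Your proposal is correct and follows essentially the same route as the paper, which deduces \Cref{thm:carlson_group2} directly from the abstract detection theorem (\Cref{thm:carlson}) applied with $R = M = H_G^*$, combined with the identifications $\bA_G \simeq \bA_{H_G^*}$ and $T_E(H_G^*;\res_{G,E}^*) \cong H_{C_G(E)}^*$ from \Cref{thm:group_point}. Your extra remarks on the index-set bookkeeping (conjugacy classes versus subgroups) and on $\rho_{H_G^*,(E,\res_{G,E}^*)}$ agreeing with the restriction map are details the paper leaves implicit, but they are consistent with its argument.
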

        
\subsection{Fusion systems and \texorpdfstring{$p$}{p}-local group theory}\label{sec:plocal}
For a finite group $G$ with Sylow $p$-subgroup $S$, one can associate a category $\cal{F}_s(G)$ to be the category whose objects are the subgroups of $S$, and which has morphisms 
\[
\Hom_{\cal{F}_s(G)}(P,Q) = \Hom_G(P,Q)
\]
where the latter denotes the set of injective group homomorphisms which are induced by conjugation in $G$. This is known as the fusion category of $G$ over $S$. Many group theoretical theorems can be stated in terms of $\cal{F}_S(G)$, for example see \cite[Part I.1]{AschbacherKessarOliver2011Fusion}. 

More generally, we can define abstract fusion systems that do not come from finite groups. In fact, we want to capture the homotopy theory of compact Lie groups, and not just finite groups. To do so, we begin with a discrete $p$-toral group, that is a group $S$ which contains a normal subgroup $S_0$ such that $S/S_0$ is a finite $p$-group, and $S_0 \cong (\Z/p^{\infty})^r$ for some $r$. The following is \cite[Definition 2.1]{BrotoLeviOliver2007Discrete}, and is essentially due to Puig \cite{Puig2006Frobenius} in the case where $S$ is a finite $p$-group. 
\begin{defn}
	A fusion system $\cF$ on a discrete $p$-toral group is a category with objects subgroups of $S$, and whose morphisms $\Hom_{\cF}(P,Q)$ satisfy the following:
	\begin{enumerate}[(a)]
		\item $\Hom_S(P,Q) \subseteq \Hom_{\cal{F}}(P,Q) \subseteq \Inj(P,Q)$.
		\item Every morphism in $\cal{F}$ factors as an isomorphism in $\cal{F}$ followed by an inclusion. 
	\end{enumerate}
\end{defn}
In order to define a working theory, one needs to impose additional technical axioms so that the fusion system is a \emph{saturated} fusion system - we will not spell out precisely what it means for a fusion system to be saturated, but rather refer the reader to \cite[Definition 2.2]{BrotoLeviOliver2007Discrete}.

Given a pair consisting of a discrete $p$-toral group $S$, and a saturated fusion system $\cal{F}$ defined on $S$, Broto, Levi, and Oliver constructed a category $\cal{L}$, the centric linking system associated to $(S,\cF)$. The triple $\cal{G} = (S,\cF,\cL)$ is known as a $p$-compact group. To this, we can define a classifying space $B\cG$ to be the $p$-completed nerve $|\cal{L}|^{\wedge}_p$. It was shown by Chermak \cite{Chermak2013Fusion} when $S$ is a finite $p$-group, and in general by Levi and Libman \cite{LeviLibman2015Existence}, that the pair $(S,
\cF)$ uniquely determines the centric linking system $\cL$. Thus, we will usually refer to the $p$-local compact group as simply a pair $\cG = (S,\cF)$ consisting of a discrete $p$-toral group, and a saturated fusion system on $S$. 
\begin{ex}
	We finish this very brief introduction to the theory of $p$-local compact groups, by giving two examples. 
\begin{enumerate}[(a)]
	\item	Given a compact Lie group $G$, one can define a $p$-local compact group $\cal{G} = (S,\cal{F}_S(G))$ where $S \in \Syl_p(G)$ is a maximal discrete $p$-toral subgroup, and $\cal{F}_S(G)$ is the fusion system defined above for $S$ a finite $p$-group. Then, there is an equivalence $B\cal{G} \simeq BG^{\wedge}_p$. 
	\item Let $(X,BX,e)$ be a $p$-compact group as defined by Dwyer and Wilkerson \cite{DwyerWilkerson1994Homotopy}. That is, $X$ is an $\F_p$-finite space, $BX$ a pointed $p$-complete space, and $e\colon X \to \Omega BX$ a homotopy equivalence. Any $p$-compact group has a Sylow $p$-subgroup $S \xr{f} X$ which is a discrete $p$-toral group. We then define a fusion system $\cal{F}_{S,f}(X)$ whose objects are subgroups of $S$ and for which
	\[
\Hom_{\cal{F}_{S,f}(X)}(P,Q) = \{ \phi \in \Hom(P,Q) \mid Bf|_{BQ}\circ B\phi \cong Bf \mid_{BP} \}.
	\]
	The pair $(S,\cF_{S,f}(X))$ then defines a $p$-local compact group with classifying space homotopy equivalent to $BX$. 
\end{enumerate}
\end{ex}
We have the following structural results about the cohomology $H_{\cal{G}}^*$, which is defined to be the cohomology of the space $B\cal{G}$. In order to state this, we point out that there is a canonical map $\theta \colon BS \to B\cal{G}$, which is the analog of the inclusion of a $p$-Sylow subgroup. 
\begin{prop}\label{thm:cohomlogyplocal}
Let $\cal{G} = (S,\cal{F})$ be a $p$-local compact group. 
\begin{enumerate}
	\item Both $H_{\cG}^*$ and $H_S^*$ are finitely generated algebras, and the map $\theta^* \colon H_{\cG}^* \to H_S^*$ makes $H_S^*$ into a finitely generated $H_{\cG}^*$-module. 
	\item The map $\theta^*$ has an additive left inverse $t \colon H_{\cG}^* \to H_S^*$ which is a map of $H_{\cG}^*$-modules.
	\item The group $S$ has a non-trivial central element of order $p$. 
\end{enumerate}
\end{prop}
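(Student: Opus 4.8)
The plan is to treat the three parts separately, obtaining (1) and (2) from the established structure theory of $p$-local compact groups and proving (3) directly by a fixed-point argument.

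For part (1) I would first recall that for a discrete $p$-toral group $S$ the ring $H_S^*$ is Noetherian. Writing $S$ as an extension of the finite $p$-group $S/S_0$ by $S_0 \cong (\Z/p^\infty)^r$, the cohomology $H^*(BS_0;\F_p) \cong \F_p[y_1,\ldots,y_r]$ is polynomial (each factor $B\Z/p^\infty$ contributing a single degree-$2$ generator), and the finite $p$-group extension by $S/S_0$ keeps the ring Noetherian and finite over $H^*(BS_0)$ by a Venkov--Evens-type argument. I would then invoke the cohomology computation of Broto--Levi--Oliver \cite{BrotoLeviOliver2007Discrete}: the map $\theta^*$ identifies $H_{\cG}^*$ with the subring of $H_S^*$ of fusion-stable elements, equivalently $H_{\cG}^* \cong \varprojlim_{\cO(\cF^c)} H^*(B(-))$. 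Since $H_S^*$ is Noetherian and $H_{\cG}^*$ is its ring of $\cF$-stable elements, a Noether-type finiteness argument (carried out in \cite{BrotoLeviOliver2007Discrete}) shows $H_{\cG}^*$ is finitely generated and $H_S^*$ is a finitely generated $H_{\cG}^*$-module.

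For part (2) the additive $H_{\cG}^*$-linear splitting should come from the characteristic idempotent of the saturated fusion system: $B\cG$ is a stable retract of $BS$, i.e. the classifying spectrum of $\cF$ is a stable summand of $\Sigma^\infty_+ BS$, again via the structure theory of \cite{BrotoLeviOliver2007Discrete}. Applying mod $p$ cohomology to this splitting produces a retraction $t \colon H_S^* \to H_{\cG}^*$ of $\theta^*$. The point I would need to verify with care is that $t$ is not merely additive but $H_{\cG}^*$-linear; this follows from Frobenius reciprocity for the stable transfer, i.e. from the fact that the idempotent realizing the splitting is a map of $B\cG$-modules. I expect this to be the main obstacle overall, since (1) is essentially a citation and (3) is elementary, so the genuine verification is exactly this interaction of the transfer with the multiplicative structure.

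For part (3), the only fully self-contained argument, I would proceed as follows. If $S_0 = 1$ then $S$ is a non-trivial finite $p$-group and has non-trivial center by the class equation, which already yields a central element of order $p$. In general set $V = \Omega_1(S_0) \cong (\Z/p)^r$, the subgroup of elements of order dividing $p$ in $S_0$; it is finite elementary abelian, characteristic in $S_0$, hence normal in $S$. Conjugation defines $S \to \mathrm{Aut}(V) = GL_r(\F_p)$ with image $Q$, which is finite (it embeds in $GL_r(\F_p)$) and in which every element has $p$-power order (as in $S$), so $Q$ is a finite $p$-group by Cauchy. The standard fixed-point lemma, namely that $|V^Q| \equiv |V| \equiv 0 \pmod p$ forces $V^Q \neq 0$, then produces a non-zero $v \in V^S$, which is precisely a central element of $S$ of order $p$.
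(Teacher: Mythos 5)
Your part (3) is correct and is essentially the paper's own argument: the paper cites \cite[Proposition 1.3(c)]{DwyerWilkerson1992cohomology} for precisely this fixed-point statement, namely that the conjugation action of $S$ on the order-$p$ elements of $S_0$ pointwise fixes a non-trivial subgroup; your explicit version via $\Omega_1(S_0)$ and the counting argument is fine.

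Parts (1) and (2), however, contain a genuine gap: the two key inputs you attribute to \cite{BrotoLeviOliver2007Discrete} --- the stable-elements description $H_{\cG}^* \cong \varprojlim_{\cO(\cF^c)} H^*(B(-))$ and the stable splitting of $\Sigma^\infty_+ BS$ with the classifying spectrum of $\cF$ as a summand --- are not proved there for fusion systems over discrete $p$-toral groups. Both are theorems about $p$-local \emph{finite} groups (the stable-elements formula is Theorem B of \cite{BrotoLeviOliver2003homotopy}, and the stable splitting is due to Ragnarsson), and their proofs rest on the characteristic $(S,S)$-biset, a construction which does not carry over when $S$ is infinite. This is exactly why the paper argues differently: it quotes \cite[Proposition 4.24]{bchv} for the $H_{\cG}^*$-linear retraction $t$ (whose construction in the compact case is the substantial point, using finite approximations of $p$-local compact groups in the sense of \cite{Gonzalez2016Finite} together with the transfer theory available for the finite approximants), and then deduces finite generation of $H_{\cG}^*$ from the splitting \cite[Corollary 4.26]{bchv}, and module-finiteness of $H_S^*$ from \cite[Proposition 5.5]{bchv}. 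Note that your logical order is also backwards even if one granted a stable-elements description: a ``Noether-type finiteness argument'' does not follow from it, because the fusion-stable elements are not the invariants of a finite group action, so there is no automatic integrality of $H_S^*$ over $H_{\cG}^*$, and a subring of a Noetherian ring need not be Noetherian. The finiteness in (1) genuinely requires the transfer/retraction of (2) as input, so (2) cannot be deferred as a verification after (1). (A smaller point: your claim that $H_S^*$ is ``finite over $H^*(BS_0)$'' does not parse as stated, since restriction maps go the other way; what the Serre spectral sequence/Evens argument gives is finite generation of $H_S^*$ as an algebra, which is what the paper cites \cite[Theorem 12.1]{DwyerWilkerson1994Homotopy} for.)
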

\begin{proof}
	That $H_S^*$ is a finitely generated $\F_p$-algebra is \cite[Theorem 12.1]{DwyerWilkerson1994Homotopy}, while part (2) is a consequence of \cite[Proposition 4.24]{bchv}.  The splitting then implies that $H_{\cG}^*$ is a finitely generated $\F_p$-algebra \cite[Corollary 4.26]{bchv}. Moreover, it is shown in \cite[Proposition 5.5]{bchv} that $H_S^*$ is a finitely generated $H_{\cG}^*$-module. Finally, that $S$ has a non-trivial central element of order $p$ is the same argument as in \cite[Proposition 1.3(c)]{DwyerWilkerson1992cohomology}; the conjugation action of $S$ on the elements of order $p$ in $(\Z/p^{\infty})^r$ must point wise fix a non-trivial subgroup. 
\end{proof}

We will also need the following results about discrete $p$-toral groups. 
\begin{prop}\label{thm:tfunctordiscrete}
	Let $S$ be a discrete $p$-toral group. 
	\begin{enumerate}
		\item Rector's category $\bA_{H^*(S)}$ is equivalent to the category $\cal{F}_S(S)^e$, the full subcategory of elementary abelian subgroups in the fusion category $\cal{F}_S(S)$ of $S$. 
		\item (Gonzalez) For any elementary abelian $p$-subgroup $E < S$ we have
		\[
T_E(H_S^*;\res_{S,E}^*) \cong H^*_{C_S(E)}. 
		\]
	\end{enumerate}
\end{prop}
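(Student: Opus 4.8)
The plan is to deduce both statements from a single homotopy-theoretic input, the generalized Dwyer--Zabrodsky theorem for discrete $p$-toral groups (due to González): for $E$ elementary abelian, the components of $\Map(BE, BS^{\wedge}_p)$ are in natural bijection with $\Rep(E,S)$, the set of $S$-conjugacy classes of homomorphisms $E \to S$, and for such a homomorphism $\rho$ the corresponding component satisfies $\Map(BE, BS^{\wedge}_p)_\rho \simeq BC_S(\rho(E))^{\wedge}_p$. Since $H^*(BS)$ is of finite type --- indeed $H_S^*$ is a finitely generated $\F_p$-algebra by \Cref{thm:cohomlogyplocal} --- and $BS^{\wedge}_p$ is $p$-complete, $BS$ is a Lannes space in the sense of \Cref{rem:spaces}, so Lannes' theorem links these mapping spaces to the $T$-functor.

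For part (1), I would first record the algebraic shadow of the component computation: combining Lannes' theorem with Dwyer--Zabrodsky yields a natural bijection
\[
\Hom_{\cal{K}}(H_S^*, H_E^*) \cong [BE, BS^{\wedge}_p] \cong \Rep(E,S),
\]
exactly as in the compact Lie and Kac--Moody cases (cf. \Cref{thm:tcalcsgroups} and \cite[Equation (8)]{BrotoKitchloo2002Classifying}). I then define the functor $\cal{F}_S(S)^e \to \bA_{H_S^*}$ sending an elementary abelian subgroup $E' \le S$ to the pair $(E', \res_{S,E'}^*)$ and a conjugation-induced monomorphism to the same underlying group homomorphism. Essential surjectivity holds because an arbitrary object $(E,f)$ corresponds under the bijection above to some class $[\rho] \in \Rep(E,S)$, and the finite-generation condition on $f$ forces $\rho$ to be injective: otherwise $f$ factors through the inflation from $H^*_{E/\ker\rho}$, whose image has Krull dimension $\rank(E/\ker\rho) < \rank(E) = \dim H_E^*$, so $H_E^*$ cannot be finite over it. Thus $\rho$ induces an isomorphism $(E,f) \cong (\rho(E), \res^*)$ in $\bA_{H_S^*}$. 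Fully faithfulness is then the observation that, under the identification of $\cal{K}$-maps with $\Rep(-,S)$, the defining relation $f_1 = H^*(\alpha) f_2$ of a morphism in $\bA_{H_S^*}$ translates into $\iota_{E'_1} \sim_S \iota_{E'_2} \circ \alpha$, which is precisely the condition that $\alpha$ be a morphism in $\cal{F}_S(S)$.

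For part (2), I would use that $BS$ is a Lannes space to identify, via \Cref{rem:spaces}, the relevant component of the $T$-functor with the cohomology of the corresponding mapping-space component: for the inclusion $\iota \colon E \hookrightarrow S$, whose adjoint is $\res_{S,E}^*$, one gets
\[
T_E(H_S^*; \res_{S,E}^*) \cong H^*\bigl(\Map(BE, BS^{\wedge}_p)_\iota\bigr).
\]
Applying the equivalence $\Map(BE, BS^{\wedge}_p)_\iota \simeq BC_S(E)^{\wedge}_p$ and taking cohomology yields $T_E(H_S^*;\res_{S,E}^*) \cong H^*_{C_S(E)}$, as claimed.

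The main obstacle is verifying that Lannes' theorem genuinely applies in the discrete $p$-toral setting, that is, checking the finite-type and connectivity hypotheses of \Cref{rem:spaces} for $BS^{\wedge}_p$ and its mapping-space components, and securing the generalized Dwyer--Zabrodsky theorem for discrete $p$-toral groups, which is the substantive geometric input and the reason part (2) is credited to González. Once that theorem is available, the remaining steps --- the bijection on $\cal{K}$-maps, the translation of finite generation into injectivity of $\rho$, and the matching of morphisms --- are formal and parallel the compact Lie case.
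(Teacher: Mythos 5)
Your overall strategy is, in substance, the same as the paper's: the paper's proof is a pure citation --- part (1) to the proof of Theorem 5.1 of \cite{bchv}, and part (2) to Step 1 of the proof of \cite[Lemma 5.1]{Gonzalez2016Finite} via \cite[Proposition 3.4.4]{lannes_ihes} --- and those arguments run exactly along the lines you describe: Lannes' theory applied to $BS^{\wedge}_p$ together with the decomposition of $\Map(BE,BS^{\wedge}_p)$ into components $BC_S(\rho(E))^{\wedge}_p$ indexed by $\Rep(E,S)$, which for the fusion system $\cF_S(S)$ is covered by \cite[Theorem 6.3]{BrotoLeviOliver2007Discrete}. The obstacles you flag (checking the finite-type and connectivity hypotheses of Lannes' theorem, and securing the Dwyer--Zabrodsky-type decomposition for discrete $p$-toral groups) are indeed exactly where the content of the cited results lies, so deferring them to those sources is reasonable.

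There is, however, one step in part (1) that you use silently and that is not formal: for your functor $\cF_S(S)^e \to \bA_{H_S^*}$, sending $E'$ to $(E',\res_{S,E'}^*)$, to be well defined, you must know that restriction along \emph{every} monomorphism $E' \hookrightarrow S$ makes $H_{E'}^*$ a finitely generated $H_S^*$-module. Your Krull-dimension argument gives only the converse implication (finite generation of $H_E^*$ over the image of $f$ forces $\rho$ to be injective); without the forward direction, your essential-surjectivity argument identifies $\bA_{H_S^*}$ only with the full subcategory of those elementary abelian subgroups for which restriction happens to be finite, not with all of $\cF_S(S)^e$. This direction is genuinely needed, is not a consequence of Lannes theory, and is not trivial for an infinite discrete group: the paper supplies it as \Cref{lem:finiteness}, by embedding $S$ as a subgroup of $U(n)$ so that $H_S^*$ is finite over $H_{U(n)}^*$ (see the proof of Proposition 2.3 of \cite{JeanneretOsse1997theory}) and then invoking Quillen's finiteness theorem \cite[Theorem 2.1]{Quillen1971spectrum} for the compact Lie group $U(n)$. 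With that lemma added, your proposal is a faithful reconstruction of the proofs the paper cites.
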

\begin{proof}
	(1) is shown in the proof of Theorem 5.1 of \cite{bchv}, while (2) is shown in Step 1 of the proof of \cite[Lemma 5.1]{Gonzalez2016Finite} as a consequence of \cite[Proposition 3.4.4]{lannes_ihes}. 	
\end{proof}
We thus deduce the following from \Cref{thm:duflotretract,thm:tfunctordiscrete,thm:cohomlogyplocal}. 
\begin{thm}\label{thm:depthplocalcompact}
Let $\cal{G}= (S,\cal{F})$ be a $p$-local compact group, $C_S$ the maximal central elementary abelian $p$-subgroup of $S$, and $C \le C_S$ a central elementary abelian $p$-subgroup of $S$.  
	\begin{enumerate}
	 	\item If $x_1,\ldots,x_n \in \frak m$ is a sequence of homogeneous elements such that the restriction of $x_1,\ldots,x_n$ form a regular sequence in $H_C^*$, then $x_1,\ldots,x_n$ is a regular sequence in $H_{\cG}^*$. 
	 	\item The depth of $H_{\cG}^*$ is greater than or equal to the rank of $C_S$. In particular, the depth of $H_{\cG}^* \ge 1$. 
	 \end{enumerate} 	
\end{thm}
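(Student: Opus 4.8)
The plan is to deduce \Cref{thm:depthplocalcompact} by verifying that the hypotheses of \Cref{thm:duflotretract} hold, with $R = H_{\cG}^*$ and $P = H_S^*$, and then tracing through the two conclusions. First I would set $i = \theta^* \colon H_{\cG}^* \to H_S^*$ to be the map induced by the canonical inclusion $\theta \colon BS \to B\cG$. By \Cref{thm:cohomlogyplocal}(1) both $H_{\cG}^*$ and $H_S^*$ are finitely generated $\F_p$-algebras, and $\theta^*$ makes $H_S^*$ into a finitely generated $H_{\cG}^*$-module, which verifies condition (1) of \Cref{thm:duflotretract}. Condition (2), the existence of an additive $H_{\cG}^*$-linear left inverse to $i$, is exactly \Cref{thm:cohomlogyplocal}(2).

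The remaining hypothesis (3) of \Cref{thm:duflotretract} is the existence of a non-trivial central object $(C,g) \in \bA_{H_S^*}$. Here is where I would combine the two parts of \Cref{thm:tfunctordiscrete} with \Cref{thm:cohomlogyplocal}(3). By \Cref{thm:cohomlogyplocal}(3) the discrete $p$-toral group $S$ has a non-trivial central elementary abelian $p$-subgroup; take $C \le C_S$ as in the statement, with $g = \res_{S,C}^*$. Since $C$ is central in $S$ we have $C_S(C) = S$, so \Cref{thm:tfunctordiscrete}(2) gives an isomorphism $T_C(H_S^*; \res_{S,C}^*) \cong H^*_{C_S(C)} = H_S^*$, and under this identification the map $\rho_{H_S^*,(C,\res_{S,C}^*)}$ becomes the identity; hence $(C,\res_{S,C}^*)$ is a central object of $\bA_{H_S^*}$, giving condition (3). (One should check that this $\rho$ is really the identity rather than merely an abstract isomorphism, but this is the content of the computation in \cite{Gonzalez2016Finite} cited for \Cref{thm:tfunctordiscrete}(2), identifying the structure map with the canonical one.)

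With all three hypotheses in place, \Cref{thm:duflotretract} applies directly. Its conclusion (1) states that if $x_1,\ldots,x_n \in \frak m$ restrict to a regular sequence in $H_C^*$, then they form a regular sequence in $R = H_{\cG}^*$, which is exactly part (1) of the present statement. Its conclusion (2) gives $\depth(H_{\cG}^*) \ge \rank(C)$; taking $C = C_S$ the maximal central elementary abelian $p$-subgroup yields $\depth(H_{\cG}^*) \ge \rank(C_S)$. The final assertion that $\depth(H_{\cG}^*) \ge 1$ follows since $C_S$ is non-trivial by \Cref{thm:cohomlogyplocal}(3), so $\rank(C_S) \ge 1$.

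I expect the only genuine subtlety to be the identification needed for hypothesis (3): one must confirm that the abstract isomorphism $T_C(H_S^*;\res_{S,C}^*) \cong H_S^*$ supplied by Gonzalez's computation is compatible with $\rho_{H_S^*,(C,\res_{S,C}^*)}$, so that the latter is genuinely an isomorphism and $(C,\res_{S,C}^*)$ is central in the precise sense of \Cref{defn:centralobject}. Everything else is an essentially formal assembly of the cited structural results about $p$-local compact groups, so the bookkeeping of which input supplies which hypothesis of \Cref{thm:duflotretract} is the main thing to get right.
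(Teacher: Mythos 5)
Your proposal is correct and follows exactly the paper's route: the paper deduces \Cref{thm:depthplocalcompact} in one line from \Cref{thm:duflotretract,thm:tfunctordiscrete,thm:cohomlogyplocal}, with precisely the assignment $R = H_{\cG}^*$, $P = H_S^*$, $i = \theta^*$ that you spell out. Your explicit verification of hypothesis (3) --- using centrality of $C$ in $S$ to get $C_S(C) = S$ and Gonzalez's computation to identify $\rho_{H_S^*,(C,\res_{S,C}^*)}$ with the canonical isomorphism --- is the detail the paper leaves implicit, and you flag the right subtlety there.
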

We now move on to Carlson's theorem. Thus, let $\cG = (S,\cF)$ be a $p$-local compact group, and $E \le S$ a subgroup of $S$. We will always write $i_E$ for the inclusion $E \to S$ of a subgroup. We assume that $E$ is fully centralized in $\cF$ in the sense of \cite[Definition 2.2]{BrotoLeviOliver2007Discrete}. This is not too strong of an assumption, since any subgroup is isomorphic (in $\cF$) to one that is fully $\cF$-centralized.  Given a fully-centralized subgroup $E$, there exists an associated $p$-local compact group $\cC_{\cG}(E) = (C_S(A),\cC_{\cF}(E))$, which is the centralizer $p$-local compact group \cite[Section 1.2]{Gonzalez2016Finite}. On the level of classifying spaces, we have $B\cC_{\cG}(E) \simeq \Map(BE,B\cal{G})_{Bf}$, where $Bf$ is the composite $BE \xr{Bi_E} BS \xr{\theta} B\cG$ \cite{Gonzalez2016Finite}. We will need the following lemma. 
\begin{lem}\label{lem:finiteness}
	Let $i \colon E \to S$ be a morphism from an elementary abelian $p$-group into a discrete $p$-toral group, then $i$ is a monomorphism if and only if $H_E^*$ is a finitely generated $H_S^*$-module via $Bi^* \colon H_{S}^* \to H_E^*$. 
\end{lem}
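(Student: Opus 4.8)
I would prove the two implications separately, with the reverse implication carrying essentially all of the content.

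For the forward direction, suppose $i$ is a monomorphism. Then $i$ identifies $E$ with an elementary abelian subgroup $i(E) \le S$, and $Bi^*$ is, up to the isomorphism $H_E^* \cong H_{i(E)}^*$ induced by $i$, the restriction map $\res_{S,i(E)}^*$. By \Cref{thm:tfunctordiscrete}(1) the object of $\bA_{H^*(S)}$ attached to the subgroup $i(E)$ is precisely the pair $(i(E),\res_{S,i(E)}^*)$; since objects of $\bA_{H^*(S)}$ are by definition pairs for which the relevant cohomology is a finitely generated $H_S^*$-module, it follows that $H_E^*$ is finitely generated over $H_S^*$ via $Bi^*$. (Alternatively one could invoke an Evens--Venkov-type finite generation statement for discrete $p$-toral groups, but citing the proposition is cleanest.)

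For the reverse direction I would argue by contraposition: assuming $K := \ker(i) \neq 1$, I would show $H_E^*$ cannot be a finitely generated $H_S^*$-module. Factor $i$ as $E \xrightarrow{\pi} E/K \xrightarrow{\bar i} S$ with $\pi$ the quotient and $\bar i$ a monomorphism; on cohomology this gives $Bi^* = \pi^* \circ B\bar i^*$, so the image $A := \im(Bi^*)$ is contained in $\im(\pi^*)$. Since $\im(\pi^*)$ is a ring-homomorphic image of $H_{E/K}^*$, its Krull dimension satisfies $\dim \im(\pi^*) \le \dim H_{E/K}^* = \rank(E/K) = \rank(E) - \rank(K) < \rank(E)$. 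Now suppose for contradiction that $H_E^*$ is finitely generated over $H_S^*$. As the $H_S^*$-action factors through $A$, the algebra $H_E^*$ is then module-finite, hence integral, over $A$; in particular every element of the subring $\im(\pi^*) \subseteq H_E^*$ is integral over $A$, so $\im(\pi^*)$ is integral over $A$ and the two share the same Krull dimension. Integrality also gives $\dim A = \dim H_E^* = \rank(E)$, whence $\dim \im(\pi^*) = \rank(E)$, contradicting the displayed bound. Thus $K = 1$ and $i$ is a monomorphism.

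The main obstacle is purely the dimension bookkeeping in the reverse direction: ensuring that finite generation forces the equalities of Krull dimensions under integral extensions, and that the image of the inflation map $\pi^*$ genuinely has dimension at most $\rank(E/K)$. Both rest on standard commutative algebra (a module-finite extension is integral and preserves Krull dimension, and $\dim H_E^* = \rank E$ for $E$ elementary abelian), so once the factorization through $E/K$ is set up the argument is immediate.
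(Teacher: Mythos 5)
Your proof is correct, but it takes a genuinely different route from the paper's. The paper's proof handles both directions at once by embedding $S$ into a unitary group: it invokes Jeanneret--Osse to get a monomorphism $S \hookrightarrow U(n)$ making $H_S^*$ a finitely generated $H_{U(n)}^*$-module, then deduces the forward direction from Quillen's finite-generation theorem for compact Lie groups and the reverse direction from Quillen's Corollary 2.4 applied to the composite $E \to S \to U(n)$. You instead get the forward direction by citing the equivalence $\bA_{H_S^*} \simeq \cal{F}_S(S)^e$ of \Cref{thm:tfunctordiscrete}(1) (legitimate within the paper's logical order, since that result precedes the lemma and is proved by external citation, though its proof in the cited source ultimately rests on the same kind of finiteness input the paper uses here), and you prove the reverse direction by hand: factoring $i$ through $E/\ker(i)$ and running a Krull-dimension/integrality argument, which is in essence a self-contained reproof of the special case of Quillen's Corollary 2.4 that is needed. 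What each approach buys: the paper's is uniform and short, with all the commutative algebra hidden inside Quillen's results; yours makes the harder (reverse) direction transparent and avoids the unitary embedding altogether. Your dimension bookkeeping is sound --- module-finite implies integral, integral extensions preserve Krull dimension, $\dim H_E^* = \rank(E)$, and $\im(\pi^*)$ has dimension at most $\rank(E/K)$ --- with the only caveat being that at odd primes these rings are graded-commutative rather than commutative, so one should note (as the paper does elsewhere) that odd-degree elements are nilpotent and Krull-dimension arguments pass to the even or reduced subring; this is routine and does not affect the argument.
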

\begin{proof}
 First, we note that $S$ admits a monomorphism into $U(n)$, for some integer $n$, for example, see the proof of Proposition 2.3 of \cite{JeanneretOsse1997theory}. In fact, the proposition shows that $H_S^*$ is a finitely generated $H_{U(n)}^*$-module. 

 Suppose now that $i \colon E \to S$ is a monomorphism, then the composite $E \hookrightarrow S \hookrightarrow U(n)$ is a monomorphism, and Quillen has shown that this makes $H_E^*$ into a $H_{U(n)}^*$-module \cite[Theorem 2.1]{Quillen1971spectrum}. It follows that $H_E^*$ is a finitely generated $H_S^*$-module. Conversely, if $H_E^*$ is a finitely generated $H_S^*$-module, then it is also a finitely generated $H_{U(n)}^*$-module. By \cite[Corollary 2.4]{Quillen1971spectrum} this is only possible if the composite $E \to S \to U(n)$ is a monomorphism, which forces $E \to S$ to be a monomorphism as well. 
\end{proof}
\begin{prop}
	Let $\cG = (S,\cF)$ be a $p$-local compact group, and let $\cal{F}^e \subset \cF$ denote the full subcategory of $\cF$ whose objects are elementary abelian $p$-subgroups of $S$ which are fully centralized. 
	\begin{enumerate}
		\item There is an equivalence of categories $\cal{F}^e \cong \bA_{H_{\cG}^*}$, which assigned to a fully centralized subgroup $E < S$ the pair $(E,j_E)$, where $j_E \colon H_{\cF}^* \to H_E^*$ is the restriction map.
		\item For each $E \in \cF^e$ there is an isomorphism $T_E(H_{\cF}^*;j_E) \cong H^*_{\cC_{\cF}(E)}$.
	\end{enumerate}
\end{prop}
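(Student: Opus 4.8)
The plan is to obtain both statements from Lannes' mapping space theory applied to the $p$-complete space $B\cG = |\cL|^{\wedge}_p$, feeding in the computation of mapping spaces out of $BE$ available for $p$-local compact groups. Two facts are used throughout: $B\cG$ is $p$-complete by construction, and $H_{\cG}^* = H^*(B\cG)$ is finitely generated, hence of finite type, by \Cref{thm:cohomlogyplocal}.

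For (1), I would first apply Lannes' theorem \cite[Theorem 3.1.1]{lannes_ihes}, which under these finiteness hypotheses produces a bijection $\Hom_{\cK}(H_{\cG}^*, H_E^*) \cong [BE, B\cG]$. The homotopy theory of $p$-local compact groups then identifies $[BE, B\cG]$ with $\Rep(E, \cF)$, the set of $\cF$-conjugacy classes of homomorphisms $E \to S$, a class $[\varphi]$ corresponding to the composite $BE \xrightarrow{B\varphi} BS \xrightarrow{\theta} B\cG$ \cite{BrotoLeviOliver2003homotopy, BrotoLeviOliver2007Discrete, Gonzalez2016Finite}. By \Cref{lem:finiteness}, such a class makes $H_E^*$ into a finitely generated $H_{\cG}^*$-module, i.e. gives an object of $\bA_{H_{\cG}^*}$, exactly when it is represented by a monomorphism, and every $\cF$-conjugacy class of monomorphisms has a fully centralized representative; this yields essential surjectivity of the assignment $E \mapsto (E, j_E)$. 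Full faithfulness is then a matter of matching definitions: a morphism $(E, j_E) \to (E', j_{E'})$ in $\bA_{H_{\cG}^*}$ is a monomorphism $\alpha \colon E \to E'$ with $j_E = H^*(\alpha) \circ j_{E'}$, which under the above bijection is precisely a morphism of $\cF^e$.

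For (2), I would use González's identification $B\cC_{\cG}(E) \simeq \Map(BE, B\cG)_{Bf}$, where $Bf$ is the map corresponding to $j_E$ under (1) \cite{Gonzalez2016Finite}. \Cref{rem:spaces} supplies a natural comparison map $T_E(H_{\cF}^*; j_E) \to H^*(\Map(BE, B\cG)_{Bf}) = H^*_{\cC_{\cF}(E)}$, and the task is to see that it is an isomorphism via Lannes' theorem \cite[Corollary 3.4.3]{lannes_ihes}. The input hypotheses, $p$-completeness of $B\cG$ and finite type of $H_{\cG}^*$, hold as above; the relevant mapping space component $\Map(BE, B\cG)_{Bf} = B\cC_{\cG}(E)$ is again $p$-complete, being the classifying space of the centralizer $p$-local compact group, so the comparison converges; it remains only to check that $T_E(H_{\cG}^*; j_E)$ is of finite type, which holds because $H_{\cG}^*$ is Noetherian and the components of its $T$-functor are again finitely generated unstable algebras. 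This is exactly the verification carried out in \cite[Lemma 5.1]{Gonzalez2016Finite}, of which the present statement is the fusion-theoretic form, with \Cref{thm:tfunctordiscrete}(2) providing the $S$-level input.

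The main obstacle is not the formal bookkeeping but the convergence of Lannes' comparison map together with the correct matching of components. One must know that $\Map(BE, B\cG)_{Bf}$ is $p$-complete with finite-type cohomology, and that the particular component $T_E(H_{\cG}^*; j_E)$ indexed by the fully centralized class, rather than by some other homomorphism in the same $\cF$-conjugacy class, is the one computing $H^*_{\cC_{\cF}(E)}$. Note that the simpler degree-one vanishing hypothesis offered in \Cref{rem:spaces} is generally unavailable here: already when $E$ is central in $S$ one has $\cC_{\cG}(E) = \cG$ and hence $T_E(H_{\cF}^*; j_E) \cong H_{\cG}^*$, whose first cohomology need not vanish, so one genuinely relies on the $p$-completeness of the centralizer's classifying space rather than on that condition. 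Both points are established in González's work, which we import.
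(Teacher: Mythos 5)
Your proposal is correct and takes essentially the same route as the paper: for part (1) both arguments combine Lannes' theorem \cite[Theorem 3.1.1]{lannes_ihes} with the Broto--Levi--Oliver classification of homotopy classes of maps $BE \to B\cG$ \cite[Theorem 6.3(a)]{BrotoLeviOliver2007Discrete}, \Cref{lem:finiteness}, and the passage to fully centralized representatives (the paper merely packages this as a composite of equivalences through an intermediate category $\bA(B\cG)$). For part (2) both you and the paper ultimately defer to Gonzalez \cite[Lemma 5.1]{Gonzalez2016Finite}; your sketch of the Lannes-theoretic argument behind that lemma, including the correct observation that one must use $p$-completeness of $B\cC_{\cG}(E)$ rather than the degree-one vanishing criterion of \Cref{rem:spaces}, is accurate but not a departure from the paper's proof.
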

\begin{proof}
	In order to prove (1), we introduce the category $\bA(B\cG)$, whose objects are pairs $(E,f)$ where $E$ is an elementary abelian $p$-group, and $f \colon BE \to B\cG$ a morphism that makes $H_E^*$ into a finitely generated $H_{\cG}^*$-module, and a morphism $(E,f) \to (E',f')$ is a monomorphism $\phi \colon E \to E'$ such that $f' \circ B\phi \simeq f$. The desired equivalence is  then given as the composite of equivalences 
\[
\xymatrix{
	\cal{F}^e \ar[r]^-{\widetilde B(-)} & \bA(B\cal{G}) \ar[r]^-{H^*(-)} & \bA_{H_{\cG}^*. }
}
\]
where $\widetilde B(E) = (BE,\theta\circ B(i_E))$ on objects, and for a morphism $\phi \colon E \to E'$ in $\cal{F}^e$, we define $\widetilde B(\phi) = \phi$.  

The functor $H^*(-)$ is an equivalence by \cite[Theorem 3.1.1]{lannes_ihes}, so it suffices to show the first functor is an equivalence. We first observe that by \Cref{lem:finiteness,thm:cohomlogyplocal}(1) this is a well-defined functor, i.e., $\widetilde B(E) \in \bA(B\cG)$.  Moreover, by \cite[Theorem 6.3(a)]{BrotoLeviOliver2007Discrete} we easily deduce that $\Hom_{\cal{F}}(E,E') = \Hom_{\bA(B\cG)}(\widetilde B(E),\widetilde B(E'))$. In particular, $\widetilde B$ is fully-faithful. 

Finally, we show that $\widetilde B$ is essentially surjective. Indeed, given $(E,f)$ where $f \colon BE \to B\cG$, by \cite[Theorem 6.3(a)]{BrotoLeviOliver2007Discrete} again, there exists a $\rho \colon E \to S$, unique up to isomorphism in $\cF$, such that $f \cong \theta \circ B\rho$. Moreover, we have $(\rho(E),\theta \circ B(i_{\rho(E)})) \in \bA({B\cG})$, and the restriction of $\rho$ induces an isomorphism $(E,f) \to (\rho(E),\theta \circ B(i_{\rho(E)}))$ in $\bA(B\cG)$. 

Note that $\rho(E)$ need not be fully centralized in $\cal{F}$. However, there exists an isomorphism $\psi \in \Hom_{\cF}(\rho(E),E')$ where $E'$ is fully centralized. Applying \cite[Theorem 6.3(a)]{BrotoLeviOliver2007Discrete} once again, we have that $\theta \circ B(i_{\rho(E)}) \cong \theta \circ B(i_{E'} \circ \psi)$. This shows that there is an isomorphism $\psi \colon (\rho(E),\theta \circ B(i_{\rho(E)})) \to (E',\theta 
\circ B(i_{E'}))$ in $\bA(B\cG)$. The latter is just $\widetilde B(E')$, and hence we deduce that $\widetilde B(E') \cong (E,f)$ in $\bA(B\cG)$. 

	Part (2) is due to Gonzalez \cite[Lemma 5.1]{Gonzalez2016Finite}. 
\end{proof}

With this in mind, Carlson's theorem takes the following form. 
\begin{thm}\label{thm:carlsonplocalcompactgroup}
	Let $\cG = (S,\cF)$ be a $p$-local compact group. If $\depth(H_{\cG}^*) \ge s$, then the morphism
	\[
\xymatrix{H_{\cG}^* \ar[r]& \displaystyle\prod_{\stackrel{E \in \cF^e}{\dim E = s}}H_{\cC_{\cG}(E)}^*}
	\]
	is injective. 
\end{thm}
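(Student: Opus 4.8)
The plan is to deduce this from the general form of Carlson's theorem, \Cref{thm:carlson}, applied to the Noetherian unstable algebra $R = H_{\cG}^*$ together with the module $M = R$. First I would check the standing hypotheses: $H_{\cG}^* = H^*(B\cG)$ is a connected unstable algebra over the Steenrod algebra, and by \Cref{thm:cohomlogyplocal}(1) it is finitely generated as an $\F_p$-algebra and hence Noetherian. Taking $M = R$ gives $\depth_R(M) = \depth(H_{\cG}^*) \ge s$ by hypothesis, so \Cref{thm:carlson} applies and shows that the map
\[
\phi_s \colon H_{\cG}^* \longrightarrow \prod_{(E,f) \in \bA_{H_{\cG}^*}^s} T_E(H_{\cG}^*;f),
\]
assembled from the maps $\rho_{H_{\cG}^*,(E,f)}$, is injective.

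Next I would rewrite the target of $\phi_s$ in terms of fusion-theoretic data, using the preceding proposition. By part (1) of that proposition the equivalence $\cF^e \simeq \bA_{H_{\cG}^*}$ sends a fully centralized elementary abelian subgroup $E < S$ to the pair $(E,j_E)$, and restricting to rank $s$ identifies a (finite, by \Cref{prop:rectorprops}(1)) skeleton of $\bA_{H_{\cG}^*}^s$ with the isomorphism classes of fully centralized $E \in \cF^e$ with $\dim E = s$. Passing to this skeleton does not change the kernel of $\phi_s$, since isomorphic objects yield isomorphic components with matching maps, and adjoining further isomorphic factors to a product preserves injectivity; hence it suffices to treat the product over $E \in \cF^e$ of dimension $s$ in the statement. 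By part (2) of the proposition, for each such $E$ there is an isomorphism $T_E(H_{\cG}^*;j_E) \cong H^*_{\cC_{\cG}(E)}$, so the target of $\phi_s$ becomes exactly $\prod_{E \in \cF^e,\, \dim E = s} H^*_{\cC_{\cG}(E)}$.

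The last and most delicate step is to identify the component $\rho_{H_{\cG}^*,(E,j_E)}$ of $\phi_s$, after the above identification, with the restriction map $H_{\cG}^* \to H^*_{\cC_{\cG}(E)}$ of the statement. Here I would invoke \Cref{rem:spaces} with $X = B\cG$ and the map $Bf = \theta \circ B i_E \colon BE \to B\cG$: the commutative square there identifies $\rho_{H^*(B\cG),(E,f)}$ with the map on cohomology induced by evaluation $\Map(BE,B\cG)_{Bf} \to B\cG$, provided $B\cG$ is a Lannes' space so that the lower horizontal map of that square is an equivalence. Since $B\cC_{\cG}(E) \simeq \Map(BE,B\cG)_{Bf}$ by Gonzalez's identification of the centralizer $p$-local compact group, and the evaluation map corresponds to the canonical inclusion of the centralizer, this exhibits $\rho_{H_{\cG}^*,(E,j_E)}$ as the restriction map. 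I expect this compatibility between the purely algebraic map produced by \Cref{thm:carlson} and the geometrically defined restriction map to be the main obstacle, in particular verifying the Lannes'-space hypotheses needed for \Cref{rem:spaces}; once it is settled, the theorem follows at once from the injectivity of $\phi_s$ together with the identifications of its source, target, and components.
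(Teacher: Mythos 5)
Your proposal is correct and takes essentially the same route as the paper, which states this theorem as an immediate consequence of \Cref{thm:carlson} (applied with $R = M = H_{\cG}^*$, Noetherian by \Cref{thm:cohomlogyplocal}) combined with the preceding proposition's equivalence $\cF^e \simeq \bA_{H_{\cG}^*}$ and Gonzalez's identification $T_E(H_{\cG}^*;j_E) \cong H^*_{\cC_{\cG}(E)}$. Your final step matching $\rho_{H_{\cG}^*,(E,j_E)}$ with the geometric restriction map via \Cref{rem:spaces} is precisely the compatibility the paper leaves implicit, and it does hold: Gonzalez's isomorphism is itself obtained from Lannes' mapping-space theorem applied to $B\cG$ together with $B\cC_{\cG}(E) \simeq \Map(BE,B\cG)_{Bf}$, so it is the natural map compatible with evaluation.
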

\subsection{Invariant theory}\label{sec:modular}
Let $V$ be a finite-dimensional $\F_p$-vector space, $G$ a finite group where $p$ divides the order of $G$, and $\rho \colon G \to GL_n(\F_p)$ a faithful modular representation. We will denote by $\F[V]$ the graded algebra of polynomial functions on $V$, i.e., the symmetric algebra on the dual $V^*$. Note that $\F[-]$ forms a contravariant functor. By placing all generators in degree 2, this is a graded $\F_p$-algebra with a unique action of the Steenrod algebra,\footnote{For example, when $p = 2$, for a generator $x$ degree 2, we have $Sq^2(x) = x^2$, and $Sq^i(x) = 0$ otherwise.} and $\F[V]$ defines an element of $\cal{K}$. See \cite[Section 5]{DwyerWilkerson1998Kahler} for more discussion on this construction, where it is called the enhanced symmetric algebra on $V$. Finally, the Steenrod operations commute with the action of $G$, and so also act on $\F[V]^G$.  

Let $U \subset V$ be a subspace, then we can define a $\cK$-map $i_{U} \colon \F[V]^G \to \F[V] \to \F[U] \to H_U^*$. The following can be deduced from \cite[Section 5]{DwyerWilkerson1998Kahler} or \cite[Section 10.1]{NeuselSmith2002Invariant}.
\begin{prop}\label{prop:tfunctorinvariant}
Let $V$ be a finite-dimensional $\F_p$-vector space, $G$ a finite group where $p$ divides the order of $G$, and $\rho \colon G \to GL_n(\F_p)$ a modular representation. For a subspace $U \subset V$, we have 
\[
T_U(\F[V]^G;i_{U}) \cong \F[V]^{G_U}
  \]
  where $G_U < G$ is the pointwise stabilizer of $U$, i.e, $G_U = \{ g \in G \mid g \cdot u = u \} $. 
\end{prop}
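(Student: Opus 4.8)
The plan is to reduce the computation to the calculation of $T_U$ on the enhanced symmetric algebra $\F[V]$ together with the fact that $T_U$ commutes with passing to $G$-invariants. First I would record the two inputs imported from \cite{DwyerWilkerson1998Kahler} (equivalently \cite{NeuselSmith2002Invariant}): that there is a natural identification $\Hom_{\cK}(\F[V],H_U^*) \cong \Hom_{\F_p}(U,V)$ under which the inclusion $\iota \colon U \hookrightarrow V$ corresponds to the composite $\F[V] \to \F[U] \to H_U^*$ used to define $i_U$, and that the $T$-functor splits as $T_U(\F[V]) \cong \prod_{\psi \in \Hom_{\F_p}(U,V)} \F[V]$, with the $\psi$-component canonically isomorphic to $\F[V]$.

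The key structural observation is that $T_U$ commutes with the invariants functor $(-)^G$. Since $G$ is finite and acts on $\F[V]$ by automorphisms of unstable algebras, $\F[V]^G$ is the kernel in $\cU$ of the map $\delta \colon \F[V] \to \prod_{g \in G}\F[V]$, $x \mapsto (gx - x)_g$, into a finite product. As $T_U$ is exact and additive, it preserves finite products and kernels, so it carries this kernel to the corresponding kernel for the induced $G$-action on $T_U\F[V]$; hence $T_U(\F[V]^G) \cong (T_U\F[V])^G$.

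Next I would compute the invariants of the product. The $G$-action on $\F[V]$ induces an action on $T_U\F[V] \cong \prod_{\psi} \F[V]$ which permutes the components by $\psi \mapsto \rho(g)\circ\psi$ while acting on each copy of $\F[V]$ through $\rho(g)$. The standard computation of invariants of a permuted product then gives $(T_U\F[V])^G \cong \prod_{[\psi] \in \Hom_{\F_p}(U,V)/G} \F[V]^{G_\psi}$, where $G_\psi = \{g \in G : \rho(g)\circ\psi = \psi\}$ is the stabilizer of $\psi$. Matching this with the component decomposition \eqref{eq:tfunctorcomponent}, the $\cK$-maps $\F[V]^G \to H_U^*$ are identified with the orbit set $\Hom_{\F_p}(U,V)/G$, and the component indexed by a class $[\psi]$ is $\F[V]^{G_\psi}$. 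The map $i_U$ corresponds to the orbit of $\iota$, whose stabilizer is $G_\iota = \{g : \rho(g)|_U = \id_U\} = G_U$, the pointwise stabilizer. Therefore $T_U(\F[V]^G; i_U) \cong \F[V]^{G_U}$, as claimed.

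The step I expect to require the most care is making precise the compatibility between the component decomposition \eqref{eq:tfunctorcomponent} of $T_U(\F[V]^G)$ and the orbit decomposition of $(T_U\F[V])^G$: one must check that the idempotents of $T_U^0(\F[V]^G) \cong \F_p^{\Hom_{\cK}(\F[V]^G,H_U^*)}$ cutting out the components correspond, under the isomorphism $T_U(\F[V]^G)\cong(T_U\F[V])^G$, to the orbit-wise projections, and in particular that the class of $\iota$ is exactly $i_U$. This is precisely where the inputs from \cite{DwyerWilkerson1998Kahler,NeuselSmith2002Invariant} giving $\Hom_{\cK}(\F[V]^G, H_U^*) \cong \Hom_{\F_p}(U,V)/G$ are used.
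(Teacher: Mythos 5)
Your proposal is correct, and it is essentially the argument the paper relies on: the paper gives no proof of its own, deferring to \cite[Section 5]{DwyerWilkerson1998Kahler} and \cite[Section 10.1]{NeuselSmith2002Invariant}, and those sources proceed exactly as you do --- compute $T_U$ of the enhanced symmetric algebra, use exactness of $T_U$ to commute it past finite-group invariants, and then match the component decomposition \eqref{eq:tfunctorcomponent} with the orbit/stabilizer decomposition of $(T_U\F[V])^G$, identifying the component of $i_U$ with the orbit of the inclusion $\iota$ whose stabilizer is the pointwise stabilizer $G_U$. The compatibility check you flag at the end (idempotents of $T_U^0(\F[V]^G)\cong\F_p^{\Hom_{\cK}(\F[V]^G,H_U^*)}$ versus orbit-wise projections) is indeed the only point requiring care, and it follows from naturality of $T_U^0$ applied to the inclusion $\F[V]^G\hookrightarrow\F[V]$.
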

Before we state our theorem on depth for invariant rings, we observe that we are in the situation of \Cref{thm:duflotretract}. First, for any subgroup $H \le G$, we have a relative transfer map \cite[Section 2.2]{NeuselSmith2002Invariant}
\[
\Tr^G_H \colon \F[V]^H \to F[V]^G
\]
given by
\[
\Tr^G_H(f)(x) = \sum_{gH \in G/H} g(f)(x)
\]
where the sum is taken over a set of left coset representatives of $H$ in $G$. The composite 
\[
\F[V]^G \to \F[V]^H \to \F[V]^G
\]
is given by multiplication by the index of $H$ in $G$. In particular if $P$ is a Sylow $p$-subgroup of $G$, then this provides the splitting required by \Cref{thm:duflotretract} (see also the discussion before Proposition 1.5 of \cite{DwyerWilkerson1992cohomology}). 

We write $V^G = \{ v  \in V \mid g \cdot v = v \}$ for the $G$-invariant subspace $V^G \subseteq V$.
 We deduce the following. 
\begin{thm}\label{thm:modular}
	Let $G$ be a finite group whose order is divisible by $p$, let $P$ be a Sylow $p$-subgroup of $G$, and let $C \subseteq V^P$ be a subspace. 
	\begin{enumerate}
	 	\item If $x_1,\ldots,x_n$ is a sequence in $\F[V]^G$ such that the restrictions of $x_1,\ldots,x_n$ form a regular sequence in $H_C^*$, then $x_1,\ldots,x_n$ form a regular sequence in $\F[V]^G$.
	 	\item The depth of $\F[V]^G$ is at least $\dim_{\F_p}(V^P)$. 
	 \end{enumerate} 
\end{thm}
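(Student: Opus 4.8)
The plan is to deduce the theorem from \Cref{thm:duflotretract}, applied with $R = \F[V]^G$ and with the intermediate unstable algebra (the one named $P$ in the statement of that theorem) taken to be $\F[V]^{P}$, the ring of invariants under the Sylow $p$-subgroup $P \le G$; the structure map $i \colon \F[V]^G \to \F[V]^{P}$ is the inclusion of invariant rings, and both are objects of $\cal{K}$ by the discussion preceding \Cref{prop:tfunctorinvariant}. The central object will be $(C, i_C)$, where $C \subseteq V^P$ is the given nonzero subspace in part (1), and $C = V^P$ in part (2). Everything then reduces to checking the three hypotheses of \Cref{thm:duflotretract}, after which the two conclusions follow verbatim, with $\rank(C) = \dim_{\F_p}(V^P)$ for part (2).

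For hypotheses (1) and (2) I would argue as follows. Both $\F[V]^G$ and $\F[V]^{P}$ are finitely generated graded $\F_p$-algebras by the classical Noether finiteness theorem for invariants of finite groups; moreover $\F[V]$ is a finitely generated module over $\F[V]^G$, and since $\F[V]^G$ is Noetherian the submodule $\F[V]^{P} \subseteq \F[V]$ is finitely generated over $\F[V]^G$ as well, giving hypothesis (1). For hypothesis (2) I use the relative transfer $\Tr^G_P \colon \F[V]^{P} \to \F[V]^G$ recalled before the theorem: the composite $\Tr^G_P \circ i$ is multiplication by $[G:P]$, which is prime to $p$ because $P$ is a Sylow $p$-subgroup, hence a unit in $\F_p$. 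Thus $[G:P]^{-1}\Tr^G_P$ is an additive left inverse of $i$, and the projection formula $\Tr^G_P(i(r)\cdot f) = r\cdot \Tr^G_P(f)$ for $r \in \F[V]^G$ shows it is $\F[V]^G$-linear, as required.

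For hypothesis (3), take the nonzero subspace $C \subseteq V^P$. Since $P$ fixes $C$ pointwise, the pointwise stabilizer $P_C$ of $C$ inside $P$ is all of $P$, so \Cref{prop:tfunctorinvariant}, applied to the group $P$, gives $T_C(\F[V]^{P}; i_C) \cong \F[V]^{P_C} = \F[V]^{P}$; one also checks routinely (from the finite generation of $\F[V]$ over $\F[V]^P$ together with finiteness of $H_C^*$ over its polynomial part $\F[C]$) that $H_C^*$ is finitely generated over $\F[V]^{P}$ via $i_C$, so that $(C, i_C)$ is a genuine, non-trivial object of $\bA_{\F[V]^{P}}$. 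The point requiring care, and which I expect to be the main obstacle, is to confirm that this isomorphism is realized by the \emph{canonical} map $\rho_{\F[V]^{P},(C,i_C)}$ of \Cref{defn:centralobject}, rather than by some unnamed abstract isomorphism: one must verify that the identification of \Cref{prop:tfunctorinvariant} is compatible with $\rho$, so that centrality holds in the precise sense of the definition. This is exactly the invariant-theoretic analogue of the group-theoretic model in \Cref{ex:pgroup}, where a subspace $C \subseteq V^P$ plays the role of a central elementary abelian subgroup.

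Granting these three verifications, \Cref{thm:duflotretract}(1) yields part (1) directly for each nonzero $C \subseteq V^P$, and \Cref{thm:duflotretract}(2) applied with $C = V^P$ gives $\depth(\F[V]^G) \ge \rank(V^P) = \dim_{\F_p}(V^P)$, which is part (2); the degenerate case $V^P = 0$ is vacuous since then $\dim_{\F_p}(V^P) = 0$.
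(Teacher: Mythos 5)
Your proposal is correct and takes essentially the same route as the paper: the paper likewise deduces the theorem from \Cref{thm:duflotretract} with $R = \F[V]^G$ and intermediate algebra $\F[V]^{P}$, using the relative transfer $\Tr^G_P$ (invertible up to the unit $[G:P]$) for the $\F[V]^G$-linear splitting and \Cref{prop:tfunctorinvariant} for centrality of $(C,i_C)$, since $P$ fixes $C \subseteq V^P$ pointwise so that $T_C(\F[V]^{P};i_C) \cong \F[V]^{P}$. The compatibility with the canonical map $\rho$ that you rightly flag is implicitly supplied by the references cited for \Cref{prop:tfunctorinvariant}, where the isomorphism is the natural one; the paper itself passes over this point without comment.
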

\begin{rem}
	Note that since $P$ is a $p$-group, $V^P \ne 0$, see \cite[Lemma 4.0.1]{CampbellWehlau2011Modular}, and we deduce the depth of $\F[V]^G$ is at least 1. Moreover, since the representation is faithful, $\dim_{\F_p}(V^P) < n$, consistent with the fact that $\depth(\F[V]^G) \le n$. 

  We also note that the inequality  
	\[
\depth(\F[V]^G) \ge \dim_{\F_p}(V^P) 
	\]
	follows from a stronger result of Ellingsrud and Skjelbred, namely that 
  \[
\depth(\F[V]^G) \ge \min(\dim_{\F_p}(V^P)+2,n).
  \] 
  The first result in \Cref{thm:modular} appears to be new. 
\end{rem}
\begin{rem}
  Implicit in this theorem is the claim that if $C \subseteq V^P$, then $\F[V]^P$ is a $H_C^*$-comodule. We can see this directly: there is a $P$-equivariant multiplication homomorphism $C \times V \to V$, which induces $\F[V]^P \to F[C] \otimes \F[V]^P$. This descends to a homomorphism $\F[V]^P \to H_C^* \otimes \F[V]^P$, and gives $\F[V]^P$ the desired $H_C^*$-comodule structure. 
\end{rem}
One has a version of Carlson's theorem in this setting as well, however this is trival: the maps in the theorem correspond to the maps $\F[V]^G \to \F[V]^{G_U}$ for $U \subset V$, and $G_U$ the pointwise stabilizer of $U$. Since $G_U$ is a subgroup of $G$, these maps are always inclusions, regardless of the depth of $\F[V]^G$. 

\bibliographystyle{alpha}
\bibliography{nilpotence}

\end{document}